\newtheorem{thm}{Theorem}[section]
\newtheorem{lem}[thm]{Lemma}
\newtheorem{cor}[thm]{Corollary}
\newtheorem{pro}[thm]{Proposition}
\theoremstyle{definition}
\newtheorem{exa}[thm]{Example}
\long\def\delete#1{}
\newcommand{\pmat}[1]{\begin{pmatrix}#1\end{pmatrix}}
\newcommand{\cay}{\mathrm{Cay}}
\newcommand{\zzz}{\mathbb{Z}_2^n}
\newcommand{\zp}{\mathbb{Z}_p^n}
\newcommand{\dn}{D_{2n}}
\newcommand{\bn}{Q_{4n}}
\def\FFF{\mathbb{F}}
\def\ZZZ{\mathbb{Z}}
\def\be{\mathbf{e}}
\def\bu{\mathbf{u}}
\def\bv{\mathbf{v}}
\def\bw{\mathbf{w}}
\def\Ga{\Gamma}
\def\Si{\Sigma}
\begin{document}
\openup 0.5\jot

\title{On regular sets in Cayley graphs}

\author[1]{Xiaomeng Wang\thanks{E-mail: \texttt{wangxm2015@lzu.edu.cn}}}
\author[1]{Shou-Jun Xu\thanks{E-mail: \texttt{shjxu@lzu.edu.cn}}}
\author[2]{Sanming Zhou\thanks{E-mail: \texttt{sanming@unimelb.edu.au}}}

\affil[1]{\small School of Mathematics and Statistics, Gansu Center for Applied Mathematics, Lanzhou University, Lanzhou, Gansu 730000, China}
\affil[2]{\small School of Mathematics and Statistics, The University of Melbourne, Parkville, VIC 3010, Australia}

\date{}

\maketitle

\begin{abstract}
Let $\Ga = (V, E)$ be a graph and $a, b$ nonnegative integers. An $(a, b)$-regular set in $\Ga$ is a nonempty proper subset $D$ of $V$ such that every vertex in $D$ has exactly $a$ neighbours in $D$ and every vertex in $V \setminus D$ has exactly $b$ neighbours in $D$. A $(0,1)$-regular set is called a perfect code, an efficient dominating set, or an independent perfect dominating set. A subset $D$ of a group $G$ is called an $(a,b)$-regular set of $G$ if it is an $(a, b)$-regular set in some Cayley graph of $G$, and an $(a, b)$-regular set in a Cayley graph of $G$ is called a subgroup $(a, b)$-regular set if it is also a subgroup of $G$. In this paper we study $(a, b)$-regular sets in Cayley graphs with a focus on $(0, k)$-regular sets, where $k \ge 1$ is an integer. Among other things we determine when a non-trivial proper normal subgroup of a group is a $(0, k)$-regular set of the group. We also determine all subgroup $(0, k)$-regular sets of dihedral groups and generalized quaternion groups. We obtain necessary and sufficient conditions for a hypercube or the Cartesian product of $n$ copies of the cycle of length $p$ to admit $(0, k)$-regular sets, where $p$ is an odd prime and a hypercube is a Cayley graph of an elementary abelian $2$-group. Our results generalize several known results from perfect codes to $(0, k)$-regular sets.

\emph{Key words}: Cayley graph; perfect code; regular set; efficient dominating set; equitable partition

\emph{AMS subject classifications (2020)}: 05C25, 05C69, 94B25
\end{abstract}

\section{Introduction}

All groups considered in this paper are finite, and all graphs considered are finite and simple. As usual, we use $V(\Gamma)$ and $E(\Ga)$ to denote the vertex set and edge set of a graph $\Gamma$, respectively. For a vertex $v \in V(\Gamma)$, the {\em neighbourhood} of $v$ in $\Gamma$, denoted by $N_{\Gamma}(v)$ or simply $N(v)$, is the set of vertices adjacent to $v$ in $\Gamma$. The subgraph of $\Gamma$ induced by a subset $S$ of $V(\Gamma)$, denoted by $\Gamma[S]$, is the graph with vertex set $S$ in which two vertices are adjacent if and only if they are adjacent in $\Ga$.

Let $a$ and $b$ be nonnegative integers. An {\em $(a,b)$-regular set}~\cite{cardoso19} in a graph $\Gamma$ is a nonempty proper subset $D$ of $V(\Gamma)$ such that $|N(v)\cap D|=a$ for every $v \in D$ and $|N(v)\cap D|=b$ for every $v \in V(\Gamma)\setminus D$. In particular, a $(0,1)$-regular set in $\Gamma$ is called a {\em perfect code}~\cite{Huang18, Ma19, Zhang20}, an {\em independent perfect dominating set}~\cite{hhs981, Lee01}, or an {\em efficient dominating set}~\cite{de03, hhs981}. In this paper, we study $(a, b)$-regular sets in Cayley graphs with a focus on the special case when $(a, b) = (0, k)$ for a positive integer $k$. Let $G$ be a group with identity element $e$. Set $X^{-1} = \{x^{-1}: x \in X\}$ for any $X \subseteq G$. If $X^{-1} = X$, then $X$ is said to be inverse-closed. For any inverse-closed subset $S$ of $G \setminus \{e\}$, the {\em Cayley graph} $\cay(G,S)$ of $G$ with {\em connection set} $S$ is the graph with vertex set $G$ and edge set $\{\{x, y\}\ :\ x,y \in G, yx^{-1} \in S\}$ \cite{Big01}. It is readily seen that $\cay(G,S)$ is a regular graph with degree $|S|$, and $\cay(G,S)$ is connected if and only if $S$ is a generating set of $G$.

In recent years, perfect codes in Cayley graphs have attracted considerable attention due to their connections with perfect $1$-error correcting codes in coding theory, tilings of groups, and equitable partitions of graphs (see, for example, \cite{Chen19,cr13,de03,deng14,deng17,feng17,Huang18,Kwo14,Lee01,Ma19,mar07,ob07,Rej13,Wang21}).
The reader is referred to \cite[Section 1]{Huang18} and \cite[Section 1]{Wang20} for such background information. In particular, the Hamming graph $H(n, q)$ and the Cartesian product $C_q^{\Box n}$ of $n$ copies of the cycle $C_q$ with length $q$ are both Cayley graphs of $\ZZZ_q^n$, and the Hamming and Lee metrics over $\ZZZ_q^n$ are exactly the graph distances in these graphs, respectively. Thus, perfect $1$-codes \cite{heden08} under these metrics are exactly perfect codes in $H(n, q)$ and $C_q^{\Box n}$, respectively. So perfect codes in Cayley graphs are an ample generalization of perfect $1$-codes in classical coding theory.

Huang et al.~\cite{Huang18} introduced the following concepts: A subset $D$ of a group $G$ is called a {\em perfect code} of $G$ if it is a perfect code in some Cayley graph of $G$, and a perfect code in a Cayley graph $\cay(G, S)$ is called a {\em subgroup perfect code} if it is also a subgroup of $G$. In \cite{Wang20, Wang21}, Wang, Xia and Zhou generalized these concepts to regular sets: A subset $D$ of a group $G$ is called an {\em $(a,b)$-regular set} of  $G$ if it is an $(a, b)$-regular set in some Cayley graph of  $G$, and an $(a,b)$-regular set in a Cayley graph $\cay(G, S)$ is called a {\em subgroup $(a,b)$-regular set} if it is also a subgroup of $G$. In~\cite{cr13}, Tamizh Chelvam and Mutharasu obtained a necessary and sufficient condition for a subgroup of a cyclic group to be a perfect code. In~\cite{Huang18}, Huang, Xia and Zhou gave a necessary and sufficient condition for a normal subgroup of a group $G$ to be a subgroup perfect code of $G$ and determined all subgroup perfect codes of dihedral groups and some abelian groups. Continuing this line of research, Chen, Wang and Xia \cite{Chen19} obtained necessary and sufficient conditions for a subgroup to be a subgroup perfect code of the group. In~\cite{Ma19}, Ma \emph{et al.} proved that a group $G$ admits every subgroup as a perfect code if and only if $G$ has no elements of order $4$. In \cite{Zhang20, Zhang22}, Zhang and Zhou obtained multiple results on subgroup perfect codes, including a few necessary and sufficient conditions for a subgroup to be a subgroup perfect code and several results on subgroup perfect codes of metabelian groups, generalized dihedral groups, nilpotent groups and 2-groups.

In~\cite{Bai20}, Bailey, Cameron and Zhou studied regular sets in Cayley graphs with the help of group representations in the general framework of equitable partitions. In~\cite{Wang20}, Wang, Xia and Zhou proved that for any normal subgroup $H$ of a group $G$, the following conditions are equivalent: (a) for any $g \in G$ with $g^2 \in H$, there exists $h \in H$ such that $(gh)^2 = e$; (b) $H$ is a perfect code of $G$; (c) $H$ is an $(a, b)$-regular set of $G$ for every pair of integers $a, b$ with $0\leq a\leq |H|-1$ and $0\leq b\leq |H|$ such that $\gcd(2, |H|-1)$ divides $a$. In the same paper they asked whether (a) and (b) are equivalent, and whether (b) and (c) are equivalent, if $H$ is not a normal subgroup of $G$. In \cite{Wang21}, they gave a positive answer to the latter question in the case when $G$ is a generalized dihedral group or a group of order $4p$ or $pq$ for some primes $p$ and $q$. With regard to the former question, an infinite family of counterexamples showing that (b) does not imply (a) when $H$ is non-normal was constructed by Behajaina, Maleki and Razafimahatratra in \cite{beh21}.

This paper is a study of regular sets in Cayley graphs with a focus on $(0,k)$-regular sets. The significance of $(0,k)$-regular sets in the study of regular sets can be seen from the following observations: Let $H$ be a subgroup $(0, k)$-regular set of a group $G$. Then there exists an inverse-closed subset $S$ of $G\setminus\{e\}$ such that $H$ is a $(0, k)$-regular set in $\cay(G,S)$. If $H$ contains no involutions, then for any even integer $a$ between $1$ and $|H|$, we can take a subset $S_0$ of $H\setminus\{e\}$ with size $a/2$ such that $H$ is an $(a,k)$-regular set in $\cay(G, S_0\cup S_0^{-1}\cup S)$. If $H$ contains involutions, then for any integer $a$ between $0$ and $|H|$, we can take subsets $S_0, S_1$ of $H \setminus \{e\}$ with $2|S_0|+|S_1|=a$ such that $S_0$ has no involutions, every element of $S_1$ is an involution, and $H$ is an $(a, k)$-regular set in $\cay(G, S_0\cup S_0^{-1}\cup S_1\cup S)$. On the other hand, if $H$ is a subgroup $(a, b)$-regular set of a group $G$, then $H$ is also a subgroup $(0, b)$-regular set of $G$ (but $H$ may not be a $(0, 1)$-regular set of $G$ as exemplified by $\{0, 2\}$, which is a subgroup $(0,2)$-regular set of $\ZZZ_4$ but not a $(0,1)$-regular set of $\ZZZ_4$).

The main results in this paper are as follows. In the next section we present some preliminary results that will be used in subsequent sections. Among other things we prove (Theorem~\ref{th:th2.0}) in Section \ref{sec:prel} that a Cayley graph of a group $G$ admits a proper subgroup $H$ of $G$ as a $(0,k)$-regular set if and only if it is a $k$-cover of $K_n$ with $H$ as a vertex-fiber, where $n = |G:H|$ is the index of $H$ in $G$. In Section \ref{sec:eq-partn}, we establish connections between subgroup $(0,k)$-regular sets in a Cayley graph and $(-k)$-equitable partitions of the graph (Theorem \ref{th:th4.11}). In Section \ref{sec:secnor}, we investigate when a non-trivial proper normal subgroup $H$ of a group $G$ is a subgroup $(0,k)$-regular set of $G$. We prove that this is always the case when $k$ is even (Theorem \ref{th:th5.3}), and we give a necessary and sufficient condition for $H$ to be a $(0,k)$-regular set of $G$ when $k$ is odd (Theorem \ref{th:th5.4}). As a consequence, we obtain that, for odd $k$, $H$ is a $(0,k)$-regular set of $G$ if and only if it is a $(0,1)$-regular set of $G$ (Corollary~\ref{co:co5.0}). In the case when $G$ is abelian and $k$ is odd, we give a necessary and sufficient condition for $H$ to be a $(0,k)$-regular set of $G$ in terms of the Sylow $2$-subgroups of $G$ (Theorem \ref{le:le5.1}). These results for odd $k$ generalize corresponding results in \cite{Huang18, Wang20} from $(0, 1)$-regular sets to $(0, k)$-regular sets. In Section \ref{sec:dih-qua}, we determine all subgroup $(0,k)$-regular sets of dihedral groups and generalized quaternion groups for all positive integers $k$ (Theorems \ref{th:th6.2} and~\ref{th:th6.4}), generalizing \cite[Theorem 2.11]{Huang18} and \cite[Theorem 1.7]{Ma19}, respectively, from $k=1$ to any $k$. In Section \ref{sec:seccube}, we prove that a hypercube $Q_n$ admits a $(0,k)$-regular set if and only if its degree is of the form $(2^t-1)k$ for some $t$ (Theorem~\ref{cor:Qn}), and under this condition we give a construction of linear $(0,k)$-regular sets in such a hypercube, where as usual the word ``linear" indicates that these sets are subspaces of $\FFF_2^n$. This is a generalization of the known result that perfect $1$-error correcting codes of length $n$ under the Hamming metric exist if and only if $n = 2^t-1$ for some $t$ (see, for example, \cite{heden08}). In the last section, Section \ref{sec:secp}, we prove that for any odd prime $p$ and integers $n, k \ge 1$, $C_p^{\Box n}$ admits a $(0,k)$-regular set if and only if its degree $2n$ is equal to $(p^t-1)k$ for some $t$ (Theorem~\ref{th:th5.2}), and under this condition we give a construction of linear $(0,k)$-regular sets in $C_p^{\Box n}$. This generalizes the known result that for any odd prime $p$, $p$-ary linear perfect $1$-error correcting codes of length $n$ under the Lee metric exist if and only if $2n = p^t - 1$ for some $t$, which can be obtained from \cite[Theorem 15]{ALB09} by choosing $q$ there to be an odd prime and $2n+1$ to be a prime power.

\section{Preliminaries}
\label{sec:prel}

As usual, for a group $G$ and a subgroup $H$ of $G$, we use $|G:H|$ to denote the index of $H$ in $G$ and $N_{G}(H)$ to denote the normalizer of $H$ in $G$. A \emph{left transversal} (respectively, \emph{right transversal}) of $H$ in $G$ is a subset of $G$ which contains exactly one element from each left coset (respectively, right coset) of $H$ in $G$. For any $D \subseteq G$ and $g \in G$, set
\[
gD = \{gx\ :\ x\in D\},\,\, Dg=\{xg\ :\ x\in D\}.
\]
The following lemma follows from the fact that the right regular representation of $G$ is a subgroup of the automorphism group of any Cayley graph of $G$.

\begin{lem}
\label{le:le2.0}
Let $G$ be a group, $D$ a proper subset of $G$, $S$ an inverse-closed subset of $G \setminus \{e\}$, and $k$ a positive integer. Then $D$ is a $(0,k)$-regular set in $\cay(G,S)$ if and only if $Dg$ is a $(0,k)$-regular set in $\cay(G,S)$ for every $g \in G$.
\end{lem}

Let $\Gamma$ be a $d$-regular graph and $D$ an $(a, b)$-regular set in $\Gamma$. By double counting the number of edges between $D$ and $V(\Gamma) \setminus D$, we obtain $|V(\Gamma)| = |D| \left(1+\frac{d-a}{b}\right)$ (see, for example, \cite{Wang20}). In particular, if $\Gamma$ admits a $(0, k)$-regular set $D$, where $k \le d$, then
\begin{equation}
|V(\Gamma)| = |D| \left(1+\frac{d}{k}\right)
\label{le:le1.1}
\end{equation}
and hence $k+d$ divides $k |V(\Ga)|$. Moreover, all $(0, k)$-regular sets in a regular graph have the same size. In particular, by (\ref{le:le1.1}), if $D$ is a $(0,k)$-regular set in a Cayley graph $\cay(G,S)$, then $k$ divides $|S| |D|$. Furthermore, in the case when $D$ is a subgroup of $G$, we have $|G:D| = 1 + \frac{|S|}{k}$ and hence $k$ divides $|S|$. So we have the following lemma.

\begin{lem}
\label{pr:pr2.1}
Let $G$ be a group, $S$ an inverse-closed subset of $G \setminus \{e\}$, and $k$ a positive integer. If $\cay(G,S)$ admits a subgroup $(0,k)$-regular set, then $k$ divides $|S|$.
\end{lem}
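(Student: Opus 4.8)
The plan is to derive the divisibility directly from the counting identity (\ref{le:le1.1}), specialised to the Cayley-graph setting, so that no new machinery is required. First I would record the relevant data: $\cay(G,S)$ is a regular graph of degree $d = |S|$ with vertex set $G$, so that $|V(\cay(G,S))| = |G|$. Suppose $D$ is a subgroup of $G$ which is a $(0,k)$-regular set in $\cay(G,S)$. Since every vertex outside $D$ has exactly $k$ neighbours in $D$ and the total degree is $|S|$, we have $k \le |S|$, so the hypotheses of (\ref{le:le1.1}) are met.

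Next I would substitute $d = |S|$ and $|V(\cay(G,S))| = |G|$ into (\ref{le:le1.1}) to obtain $|G| = |D|\left(1 + \frac{|S|}{k}\right)$. Dividing through by $|D|$, which is legitimate as $D$ is nonempty (indeed $e \in D$ since $D$ is a subgroup), yields $|G:D| = 1 + \frac{|S|}{k}$, where the left-hand side is precisely the index of $D$ in $G$.

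The final step is an arithmetic observation. Because $D$ is a subgroup, Lagrange's theorem guarantees that $|G:D|$ is a positive integer. Hence $\frac{|S|}{k} = |G:D| - 1$ is a nonnegative integer, which is exactly the assertion that $k$ divides $|S|$.

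As for obstacles, there is essentially none beyond checking the two hypotheses just noted, namely $k \le |S|$ (automatic) and the integrality of the index (which is where the subgroup assumption is used). The conceptual point is simply that being a subgroup upgrades the weaker divisibility $k \mid |S|\,|D|$, valid for an arbitrary $(0,k)$-regular set, to the stronger $k \mid |S|$, because here the factor $|D|$ can be cancelled against $|G|$ via the index relation.
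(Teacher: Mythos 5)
Your proof is correct and follows essentially the same route as the paper: the paper also derives $|G:D| = 1 + \frac{|S|}{k}$ from the counting identity \eqref{le:le1.1} and concludes that $k$ divides $|S|$ because the index is an integer. No issues.
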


\begin{lem}
\label{le:le2.2}
Let $G$ be a group, $H$ a proper subgroup of $G$, $S$ an inverse-closed subset of $G \setminus \{e\}$, and $k$ a positive integer. If $H$ is a $(0,k)$-regular set in $\cay(G,S)$, then there exists a subset $L$ of $S$ with $|L| = |G:H| - 1$ such that $L \cup \{e\}$ is a left transversal of $H$ in $G$.
\end{lem}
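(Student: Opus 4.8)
The plan is to reformulate the $(0,k)$-regularity of $H$ in terms of how $S$ meets the cosets of $H$, and then to use the fact that $S$ is inverse-closed to pass from right cosets to left cosets, from which the desired transversal can be read off directly.

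First I would unwind the definitions. In $\cay(G,S)$ the neighbourhood of a vertex $g$ is $N(g) = Sg = \{sg : s \in S\}$, so for $g \in G \setminus H$ the number of neighbours of $g$ lying in $H$ equals $|Sg \cap H| = |S \cap Hg^{-1}|$, since $sg \in H \iff s \in Hg^{-1}$ and right multiplication by $g$ is injective. Because $H$ is a $(0,k)$-regular set, this count equals $k$ for every $g \notin H$. As $g$ runs over $G \setminus H$ so does $g^{-1}$, and correspondingly $Hg^{-1}$ runs over exactly the right cosets of $H$ different from $H$ itself; hence the regularity hypothesis gives $|S \cap Hy| = k$ for every right coset $Hy \neq H$ (the independence condition $S \cap H = \emptyset$ will not even be needed).

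Next I would exploit the inverse-closedness of $S$. For any $x \notin H$ the set $xH$ is a non-trivial left coset, and $(xH)^{-1} = H x^{-1}$ is a non-trivial right coset, because $H = H^{-1}$ and $x^{-1} \notin H$. Since $S = S^{-1}$, the map $s \mapsto s^{-1}$ restricts to a bijection between $S \cap xH$ and $S \cap Hx^{-1}$; as the latter set has size $k \ge 1$ by the previous step, the former is non-empty. Therefore $S$ meets every non-trivial left coset of $H$.

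Finally I would build $L$. For each of the $|G:H|-1$ left cosets $xH \neq H$, I choose one element $\ell_{xH} \in S \cap xH$ and let $L$ be the set of all these chosen elements. Elements indexed by distinct cosets lie in distinct left cosets, hence are pairwise distinct, so $|L| = |G:H|-1$, and $L \subseteq S$ by construction. Adjoining $e$, which represents the coset $H$ itself, the set $L \cup \{e\}$ then contains exactly one element from each left coset of $H$ and so is a left transversal, as required. I do not expect any real obstacle beyond the bookkeeping: the one point deserving care is the asymmetry between the two sides, namely that $(0,k)$-regularity is naturally a statement about right cosets while a left transversal concerns left cosets, and this is exactly the mismatch that the inverse-closedness of $S$ is used to resolve.
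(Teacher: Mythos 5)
Your proof is correct and follows essentially the same route as the paper's: both arguments reduce to showing that $S$ meets every non-trivial left coset of $H$ and then picking one representative per coset (the paper reaches this in one step from the fact that each $u \notin H$ has $k \ge 1$ neighbours in $H$, hence $u \in sH$ for some $s \in S$, whereas you count in right cosets first and flip sides via $S = S^{-1}$ --- a cosmetic difference). The paper's proof additionally shows that each such coset contains exactly $k$ elements of $S$, which is not needed for this lemma (it is reused later); your observation that this, and the independence of $H$, can be omitted is correct.
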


\begin{proof}
Denote $\Ga = \cay(G,S)$ and $m = |G:H| - 1$. Since $H$ is a subgroup $(0,k)$-regular set in $\Gamma$, we have $|S| = km$ by \eqref{le:le1.1}. Since $k \ge 1$, we know that $G$ is the union of $sH$, for $s \in S \cup \{e\}$. Of course, for any two distinct elements $s, s'$ of $S$, we have either $sH= s'H$ or $sH\cap s'H=\emptyset$.

\smallskip
\textsf{Claim 1.} For any $k+1$ distinct elements $s_{1}, s_{2}, \ldots, s_{k+1}$ of $S$, we have $s_{1}H\cap s_{2}H\cap\cdots \cap s_{k+1}H=\emptyset$.
\smallskip

Suppose otherwise. Then there exists a vertex $u \in G$ such that $u=s_{1}h_1=s_{2}h_2=\cdots=s_{k+1}h_{k+1}$ for $k+1$ distinct elements $h_1, h_2,\ldots, h_{k+1}$ of $H$. That is, $u$ is adjacent in $\Gamma$ to $k+1$ distinct vertices in $H$, which contradicts the assumption that $H$ is a $(0,k)$-regular set in $\Gamma$.

\smallskip
\textsf{Claim 2.} For any $s \in S$, there exist exactly $k-1$ distinct elements $s_{1}, \ldots,  s_{k-1}$ of $S$ other than $s$ such that $s_1H = \cdots = s_{k-1}H = sH$.
\smallskip

In fact, let $\{s_{1}, \ldots,  s_{l}\}$ be a subset of $S$ with maximum size $l$ such that $s_1H = \cdots = s_{l}H = sH$. Then there exist distinct elements $h_1, \ldots, h_l, h$ of $H$ such that $s_1h_1 = \cdots = s_{l}h_l = sh$. By Claim 1, we have $0 \le l < k$. If $l < k - 1$, then $sh$ is adjacent to at most $l+1 < k$ vertices in $H$, which contradicts the assumption that $H$ is a $(0,k)$-regular set in $\cay(G,S)$. Hence $l = k-1$ and Claim 2 follows.

Recall that $G$ is the union of $sH$, for $s \in S \cup \{e\}$. This together with Claim 2 implies the existence of a subset $L$ of $S$ with $|L| = |G:H| - 1$ such that $L \cup \{e\}$ is a left transversal of $H$ in $G$.
\end{proof}

A graph $\Gamma$ is called a \emph{$k$-cover} of a graph $\Sigma$ (see, for example, \cite{ZZ22}) if there exists a surjective homomorphism $f$ from $\Gamma$ to $\Sigma$ such that $|N_{\Gamma}(u) \cap f^{-1}(y)| = k$ for any $\{x,y\}\in E(\Sigma)$ and $u \in f^{-1}(x)$. The homomorphism $f$ is called a \emph{$k$-covering} from $\Gamma$ to $\Sigma$, and if in addition $f$ is $m$-to-$1$ for some $m \ge k$ then $f$ is called an {\em $m$-fold $k$-covering}. Call $f^{-1}(x)$ the \emph{fiber} of $x \in V(\Si)$ and $f^{-1}(\{x,y\}) = \{\{u,v\} \in E(\Ga): \{f(u), f(v)\} = \{x, y\}\}$ the \emph{fiber} of $\{x,y \}\in E(\Si)$ under $f$. Of course, a $1$-cover is a cover and a $1$-covering is a covering in the usual sense (see \cite{Big01, god01, kwak98}). It is readily seen that for any $k$-covering $f:\Gamma \rightarrow \Si$, every vertex-fiber of $f$ is an independent set of $\Gamma$ and the subgraph of $\Gamma$ induced by any two distinct vertex-fibers is either an empty graph or a $k$-regular bipartite graph. (Here we use the assumption that $\Ga$ and $\Si$ are simple graphs.) In particular, if $\Si$ is connected, then all vertex-fibers have the same size, say, $m$, and hence $f$ is an $m$-fold $k$-covering.

The following lemma follows directly from the definitions of $(0,k)$-regular sets and $k$-coverings. Denote by $K_n$ the complete graph with $n$ vertices.

\begin{lem}
\label{le:le1.4}
Let $\Ga$ and $\Si$ be graphs and $k$ a positive integer. Then the following hold:
\begin{enumerate}[\rm (a)]
\item
if $f: \Gamma \rightarrow \Sigma$ is a $k$-covering and $D$ is a $(0,1)$-regular set in $\Sigma$, then $f^{-1}(D)$ is a $(0,k)$-regular set in $\Gamma$;
\item
if $\Gamma$ is $d$-regular, where $d \ge 1$, and $D_1, D_2, \ldots,D_n$ are pairwise disjoint $(0,k)$-regular sets in $\Gamma$, then $\Gamma[D_1\cup D_2 \cup \cdots \cup D_n]$ is an $m$-fold $k$-cover of $K_n$, where $m = \frac{k |V(\Gamma)|}{k+d}$ is the common size of $D_i$, for $1\leq i\leq n$.
\end{enumerate}
\end{lem}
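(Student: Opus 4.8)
The plan is to prove both parts directly from the definitions, using in part (a) that a graph homomorphism sends each edge to an edge (so adjacent vertices have adjacent, hence distinct, images), and in part (b) exhibiting the map induced by the partition into the $D_i$ as an explicit covering onto $K_n$.

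For part (a), I would first record that $f^{-1}(D)$ is a nonempty proper subset of $V(\Ga)$: nonempty because $D \ne \emptyset$ and $f$ is surjective, proper because $D$ is proper and $f$ is surjective. To verify the $(0,k)$-condition, take any $u \in V(\Ga)$ and put $x = f(u)$. If $u \in f^{-1}(D)$, so $x \in D$, then any neighbour $v$ of $u$ satisfies $f(v) \in N_{\Si}(x)$; since $D$ is independent in $\Si$ (being $(0,1)$-regular), $N_{\Si}(x) \cap D = \emptyset$, so $f(v) \notin D$ and $v \notin f^{-1}(D)$. Hence such a $u$ has no neighbour in $f^{-1}(D)$. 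If instead $x \notin D$, then $x$ has a unique neighbour $y$ in $D$, and every neighbour $v$ of $u$ with $v \in f^{-1}(D)$ must satisfy $f(v) \in D \cap N_{\Si}(x) = \{y\}$, i.e.\ $v \in f^{-1}(y)$. Thus $N_{\Ga}(u) \cap f^{-1}(D) = N_{\Ga}(u) \cap f^{-1}(y)$, which has size exactly $k$ by the $k$-covering property applied to the edge $\{x,y\}$. This yields $0$ neighbours inside $f^{-1}(D)$ for vertices of $f^{-1}(D)$ and exactly $k$ for vertices outside, as required.

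For part (b), by \eqref{le:le1.1} every $(0,k)$-regular set in $\Ga$ has the same size $m = \frac{k|V(\Ga)|}{k+d}$, so each $D_i$ has size $m$; moreover $m \ge k$, since $\Ga$ is $d$-regular with $d \ge 1$ and each $D_i$ is a proper subset, so any vertex outside $D_i$ has exactly $k$ neighbours in $D_i$, forcing $|D_i| \ge k$. Write $\Lambda = \Ga[D_1 \cup \cdots \cup D_n]$ and define $g : V(\Lambda) \to V(K_n)$ by $g(v) = i$ whenever $v \in D_i$; this is well defined because the $D_i$ are pairwise disjoint and surjective because each $D_i$ is nonempty, with every fibre $g^{-1}(i) = D_i$ of size $m$. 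To see that $g$ is a homomorphism into $K_n$, note that if $u, v$ are adjacent in $\Lambda$ they cannot lie in the same $D_i$, since each $D_i$ is independent in $\Ga$ (being $(0,k)$-regular); hence $g(u) \ne g(v)$, and distinct vertices of $K_n$ are adjacent. Finally, for the covering condition, fix an edge $\{i,j\}$ of $K_n$ and $u \in D_i$: because $u \notin D_j$ and $D_j$ is $(0,k)$-regular in $\Ga$, the vertex $u$ has exactly $k$ neighbours in $D_j$ in $\Ga$, and since $D_j \subseteq V(\Lambda)$ these are precisely the neighbours of $u$ in $\Lambda$ lying in $g^{-1}(j)$. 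Thus $g$ is an $m$-fold $k$-covering, and $\Lambda$ is an $m$-fold $k$-cover of $K_n$.

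Neither part involves any serious calculation, so the main points to handle carefully are conceptual. In part (a) it is the use of the homomorphism property in the correct direction, namely that a neighbour of $u$ maps into the neighbourhood of $f(u)$, which is exactly what allows the uniqueness in the $(0,1)$-condition to collapse the neighbour count onto the single fibre $f^{-1}(y)$. In part (b) the point to check is that passing from $\Ga$ to the induced subgraph $\Lambda$ leaves the relevant neighbour counts unchanged, which holds because all the neighbours being counted already lie in $D_1 \cup \cdots \cup D_n$. The only genuinely extra verification is the inequality $m \ge k$, which is what is needed for $g$ to qualify as an \emph{$m$-fold} $k$-covering rather than merely a $k$-covering.
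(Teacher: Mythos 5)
Your proof is correct and is precisely the direct verification from the definitions that the paper has in mind --- the paper omits the proof entirely, stating only that the lemma ``follows directly from the definitions of $(0,k)$-regular sets and $k$-coverings.'' Your extra check that $m \ge k$, needed for $g$ to qualify as an \emph{$m$-fold} $k$-covering under the paper's definition, is a worthwhile detail the paper leaves implicit.
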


The following result generalizes \cite[Theorem 1]{Lee01} from $(0,1)$-regular sets to $(0,k)$-regular sets.

\begin{thm}
\label{th:th1.1}
A regular graph $\Gamma$ is a $k$-cover of $K_n$ if and only if $V(\Gamma)$ can be partitioned into $n$ blocks each of which is a $(0,k)$-regular set in $\Gamma$, where $n$ and $k$ are positive integers.
\end{thm}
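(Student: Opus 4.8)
The plan is to prove the two implications separately, in each case reducing to Lemma~\ref{le:le1.4}, which already packages the interplay between $(0,k)$-regular sets and $k$-coverings. Throughout I treat the substantive case $n\ge 2$, since each required block must be a proper subset of $V(\Ga)$ and a single block can never be one. I also record the standing fact that the existence of a $(0,k)$-regular set $D$ forces $V(\Ga)\setminus D\neq\emptyset$, so any vertex outside $D$ has $k\ge 1$ neighbours inside it; by regularity the common degree $d$ of $\Ga$ is therefore at least $1$, which is exactly the hypothesis needed to invoke Lemma~\ref{le:le1.4}(b).

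For the forward direction, suppose $\Ga$ is a $k$-cover of $K_n$ and fix a $k$-covering $f:\Ga\to K_n$. The guiding observation is that each singleton $\{i\}$, $i\in V(K_n)$, is a $(0,1)$-regular set in $K_n$: the vertex $i$ has no neighbour inside $\{i\}$, while every other vertex is adjacent to $i$ in the complete graph and thus has exactly one neighbour in $\{i\}$. Applying Lemma~\ref{le:le1.4}(a) with $D=\{i\}$ shows that the fiber $f^{-1}(\{i\})$ is a $(0,k)$-regular set in $\Ga$. Since $f$ is surjective, the fibers $f^{-1}(1),\dots,f^{-1}(n)$ are nonempty and partition $V(\Ga)$, which exhibits the desired partition into $n$ blocks, each a $(0,k)$-regular set. (If one prefers a direct check, the same conclusion follows from the definitions: a fiber is an independent set because $f$ is a homomorphism into a loopless graph, giving the parameter $a=0$, and the defining local condition $|N_\Ga(u)\cap f^{-1}(j)|=k$ for $u\in f^{-1}(i)$ and $j\neq i$ gives $b=k$.)

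For the converse, suppose $V(\Ga)=D_1\cup\cdots\cup D_n$ is a partition into blocks, each of which is a $(0,k)$-regular set in $\Ga$. These blocks are pairwise disjoint $(0,k)$-regular sets, and $d\ge 1$ as noted above, so Lemma~\ref{le:le1.4}(b) applies and yields that $\Ga[D_1\cup\cdots\cup D_n]$ is an $m$-fold $k$-cover of $K_n$. Because the blocks exhaust $V(\Ga)$, the induced subgraph $\Ga[D_1\cup\cdots\cup D_n]$ is simply $\Ga$ itself, whence $\Ga$ is a $k$-cover of $K_n$, as required.

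The argument is short because Lemma~\ref{le:le1.4} does the heavy lifting, so I do not expect a genuine obstacle; the points demanding care are purely definitional. On the forward side the key is to recognise the singletons of $K_n$ as $(0,1)$-regular sets, which is what triggers part~(a); on the converse side the key is that one is handed a genuine \emph{partition} rather than an arbitrary disjoint family, which is precisely what makes the induced subgraph in part~(b) equal to all of $\Ga$. Alongside this, the only bookkeeping is the degenerate-case management ($n\ge 2$, and hence $d\ge 1$) that guarantees the blocks are genuinely proper and that part~(b) is applicable.
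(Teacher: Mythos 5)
Your proof is correct and follows essentially the same route as the paper: the forward direction applies Lemma~\ref{le:le1.4}(a) to the singleton $(0,1)$-regular sets of $K_n$ and uses surjectivity of the covering to get the partition, and the converse invokes Lemma~\ref{le:le1.4}(b) on the exhaustive family of blocks. Your extra remarks on the degenerate cases ($n\ge 2$, $d\ge 1$) are harmless bookkeeping that the paper leaves implicit.
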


\begin{proof}
The sufficiency follows from part (b) of Lemma~\ref{le:le1.4}. For the necessity, suppose that $\Gamma$ is a $k$-cover of $K_n$, with $f: \Gamma \rightarrow K_n$ a $k$-covering. Then by part (a) of Lemma~\ref{le:le1.4}, the fiber $f^{-1}(u)$ of any $u \in V(K_n)$ is a $(0,k)$-regular set in $\Gamma$. Since $f$ is surjective, $\{f^{-1}(u): u\in V(K_n)\}$ is a partition of $V(\Gamma)$ into $n$ blocks each of which is a $(0,k)$-regular set in $\Gamma$.
\end{proof}

In~\cite[Theorem 2]{Lee01}, Lee gave a necessary and sufficient condition for the existence of a $(0, 1)$-regular set in Cayley graphs of abelian groups. This result can be generalized as follows.

\begin{thm}
\label{th:th2.0}
Let $G$ be a group, $H$ a proper subgroup of $G$, $S$ an inverse-closed subset of $G \setminus \{e\}$ such that $S \subset N_{G}(H)$, and $k$ a positive integer. Then $H$ is a $(0,k)$-regular set in $\cay(G,S)$ if and only if there exists a $k$-covering $f: \cay(G,S) \rightarrow K_{n}$ such that $H$ is a vertex-fiber of $f$, where $n = |G:H|$.
\end{thm}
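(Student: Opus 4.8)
The plan is to prove both directions by constructing the relevant object explicitly. The key structural hypothesis is $S \subset N_G(H)$, which guarantees that each $s \in S$ satisfies $sH = Hs$, so that the left cosets of $H$ coincide with the right cosets and multiplication by $s$ permutes the cosets in a well-defined way. This is what will let us turn the Cayley-graph adjacency into a genuine graph homomorphism onto $K_n$.

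For the necessity direction, suppose $H$ is a $(0,k)$-regular set in $\Gamma := \cay(G,S)$. I would first invoke Lemma~\ref{le:le2.2} to extract a subset $L \subseteq S$ with $|L| = n-1$ such that $L \cup \{e\}$ is a left transversal of $H$ in $G$; thus the $n$ distinct left cosets of $H$ are exactly $H = eH$ and $sH$ for $s \in L$. By Lemma~\ref{le:le2.0}, every right translate $Hg$ is again a $(0,k)$-regular set, and since $S$ normalizes $H$ the cosets $gH$ are each $(0,k)$-regular sets as well; in particular the $n$ cosets of $H$ partition $V(\Gamma)$ into $(0,k)$-regular sets. I would then define $f: \Gamma \to K_n$ by sending each vertex to the coset it lies in, identifying $V(K_n)$ with the set of cosets. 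The point to verify is that $f$ is a $k$-covering: for any edge $\{x,y\}$ of $K_n$ — i.e.\ any two distinct cosets $xH, yH$ — and any vertex $u \in xH$, we need $|N_\Gamma(u) \cap yH| = k$. This follows because $yH$ is a $(0,k)$-regular set and $u \notin yH$, so $u$ has exactly $k$ neighbours in $yH$; and no two vertices of the same coset are adjacent because $H$ (hence each coset, by translation) is an independent set. Taking $H$ as the fiber of the vertex corresponding to the identity coset completes this direction.

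For the sufficiency direction, suppose such a $k$-covering $f$ exists with $H = f^{-1}(x_0)$ for some $x_0 \in V(K_n)$. By part~(a) of Lemma~\ref{le:le1.4}, since any single vertex $\{x_0\}$ of $K_n$ is trivially a $(0,1)$-regular set in $K_n$, its preimage $f^{-1}(x_0) = H$ is a $(0,k)$-regular set in $\Gamma$. Here I would just need to check that a singleton is indeed a $(0,1)$-regular set of $K_n$: it has no neighbour inside itself (giving $a=0$) and every other vertex is adjacent to it exactly once (giving $b=1$), so the hypothesis of Lemma~\ref{le:le1.4}(a) is met. This immediately yields the claim.

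The main obstacle I anticipate is in the necessity direction, specifically in showing that the map $f$ sending a vertex to its coset is a legitimate graph homomorphism onto $K_n$ — that is, that vertices in \emph{distinct} cosets are the ones joined by edges mapping to edges of $K_n$, while making sure adjacent vertices never land in the same coset. This is exactly where the normalizing hypothesis $S \subset N_G(H)$ does the work: without it, a connection-set element $s$ could send an element of $H$ to a vertex in a coset that depends on the particular element rather than only on the coset, so the coset partition would not be preserved by adjacency and the quotient onto $K_n$ could fail to be well-defined. I would therefore spend the most care verifying that each coset is an independent set and that the induced bipartite graph between any two cosets is $k$-regular, both of which reduce cleanly to the $(0,k)$-regularity of the cosets established via Lemmas~\ref{le:le2.0} and~\ref{le:le2.2} together with the normalizer condition.
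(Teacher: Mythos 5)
Your proposal is correct and follows essentially the same route as the paper: both directions rest on Lemma~\ref{le:le2.2} to get a transversal $L\cup\{e\}$ inside $S\cup\{e\}$, Lemma~\ref{le:le2.0} plus $S\subset N_G(H)$ to see that each coset $sH=Hs$ is a $(0,k)$-regular set, and Lemma~\ref{le:le1.4}(a) for the converse; your explicit verification that the coset map is a $k$-covering is just Lemma~\ref{le:le1.4}(b) unfolded. The only cosmetic caveat is that your phrase ``the cosets $gH$ are each $(0,k)$-regular sets'' should be restricted to representatives $g\in L\cup\{e\}\subseteq S\cup\{e\}$, which is exactly what the transversal from Lemma~\ref{le:le2.2} provides.
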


\begin{proof}
Denote $n = |G:H|$. If $H$ is a $(0,k)$-regular set of $\cay(G,S)$, then by Lemma \ref{le:le2.0} and our assumption $S \subset N_{G}(H)$, $sH=Hs$ is also a $(0, k)$-regular set of $\cay(G,S)$ for any $s\in S$. Moreover, by Lemmas \ref{le:le2.2} and \ref{le:le1.4}, there exists an $|H|$-fold $k$-covering $f: \cay(G,S) \rightarrow K_{n}$ with vertex-fibers $gH$, $g \in L \cup \{e\}$, where $L$ is as in Lemma~\ref{le:le2.2}. In particular, $H$ is a vertex-fiber of $f$.

Conversely, suppose that there exists a $k$-covering $f: \cay(G,S) \rightarrow K_{n}$ such that $H$ is a vertex-fiber of $f$. Then all elements of $H$ are mapped by $f$ to the same vertex of $K_n$, say, $x$, and moreover $H = f^{-1}(x)$. Since $\{x\}$ is a $(0, 1)$-regular set in $K_n$, by part (a) of Lemma \ref{le:le1.4}, $H$ is a $(0,k)$-regular set in $\cay(G,S)$.
\end{proof}

The next lemma will play an important role in our subsequent discussions. In the special case when $k=1$, the equivalence of (\ref{it:ita}), (\ref{it:itd}) and (\ref{it:ite}) in this lemma gives rise to \cite[Lemma 2.2]{Ma19} which in turn is an extension of \cite[Lemma 2.1(a)]{Huang18}.

\begin{lem}
\label{co:co2.1}
Let $G$ be a group, $H$ a proper subgroup of $G$, $S$ an inverse-closed subset of $G \setminus \{e\}$, and $k$ a positive integer. Then the following conditions are equivalent:
\begin{enumerate}[\rm (a)]
\item\label{it:ita}
$H$ is a $(0,k)$-regular set of $\cay(G,S)$;
\item\label{it:itb}
there exists a subset $L$ of $S$ such that $L \cup \{e\}$ is a left transversal of $H$ in $G$ and $|\{g \in S \setminus L: gH = sH\}| = k-1$ for each $s \in L$;
\item\label{it:itc}
$H\cap S=\emptyset$ and $S$ can be partitioned into $n - 1$ parts each with size $k$ such that $sH = s'H$ for $s, s'\in S$ in the same part and $sH \ne s'H$ for $s, s'\in S$ in different parts;
\item\label{it:itd}
$S$ can be partitioned as $\{L_1,L_2,\ldots,L_k\}$ such that $L_i \cup \{e\}$ is a left transversal of $H$ in $G$ for each $1\leq i\leq k$;
\item\label{it:ite}
$S$ can be partitioned as $\{R_1,R_2,\ldots,R_k\}$ such that $R_i \cup \{e\}$ is a right transversal of $H$ in $G$ for each $1\leq i\leq k$.
\end{enumerate}
\end{lem}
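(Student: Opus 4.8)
The plan is to prove the chain of equivalences by establishing a cycle of implications together with the two easy reverse directions, exploiting the symmetry between left and right transversals. First I would observe that several of the conditions are restatements of one another once the arithmetic constraint $|S| = k(n-1)$ from \eqref{le:le1.1} is in place, where $n = |G:H|$. The genuine content is the passage from the combinatorial/metric condition \eqref{it:ita} to the coset-partition conditions \eqref{it:itb}--\eqref{it:ite}, so I would orient the proof around that.

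I would begin with \eqref{it:ita} $\Rightarrow$ \eqref{it:itb}. Lemma~\ref{le:le2.2} already supplies the subset $L \subseteq S$ with $|L| = n-1$ such that $L \cup \{e\}$ is a left transversal of $H$. To count the remaining elements of $S$, I would fix $s \in L$ and apply Claim~2 from the proof of Lemma~\ref{le:le2.2}: there are exactly $k-1$ elements of $S$, other than $s$, lying in the coset $sH$. Since the cosets $sH$ for $s \in L$ are pairwise distinct and exhaust all non-trivial left cosets, and since $H \cap S = \emptyset$ (as $H$ is a $(0,k)$-regular set, no vertex of $H$ is adjacent to $e \in H$, forcing $s \notin H$ for all $s \in S$), every element of $S$ lies in exactly one coset $sH$ with $s \in L$. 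Thus $\{g \in S \setminus L : gH = sH\}$ has size $k-1$, giving \eqref{it:itb}. Next, \eqref{it:itb} $\Rightarrow$ \eqref{it:itc} is a direct repackaging: the fibres $\{g \in S : gH = sH\}$, $s \in L$, partition $S$ into $n-1$ blocks each of size $k$, and $H \cap S = \emptyset$ follows since $e \notin S$ and each block sits in a non-trivial coset. For \eqref{it:itc} $\Rightarrow$ \eqref{it:itd}, I would use the partition of $S$ into $n-1$ blocks $B_1, \dots, B_{n-1}$, each of size $k$ and each contained in a distinct non-trivial left coset; choosing one element from each block and grouping the $i$-th choices yields sets $L_1, \dots, L_k$, each a system of distinct non-trivial coset representatives, so each $L_i \cup \{e\}$ is a left transversal.

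The step \eqref{it:itd} $\Rightarrow$ \eqref{it:ita} closes the cycle and is where I would verify the metric condition directly. Given the partition $S = L_1 \cup \cdots \cup L_k$ with each $L_i \cup \{e\}$ a left transversal, I would count, for a vertex $u \in G$, the neighbours of $u$ inside $H$, i.e. the number of $s \in S$ with $su^{-1}\cdots$—more precisely the number of $h \in H$ with $hu^{-1} \in S$ (equivalently $u \in s^{-1}H$ pattern); here I would take care to use the correct Cayley-graph adjacency convention $yx^{-1} \in S$. For $u \in H$ the count is $0$ because each $L_i$ avoids $H$, and for $u \notin H$ the left-transversal property of each $L_i \cup \{e\}$ forces exactly one element of each $L_i$ to land in the coset $Hu$, giving exactly $k$ neighbours. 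This establishes \eqref{it:ita}. Finally, the equivalence of \eqref{it:itd} and \eqref{it:ite} follows by the inverse-closure of $S$: applying $x \mapsto x^{-1}$ sends a left-transversal partition to a right-transversal partition, since $L_i \cup \{e\}$ is a left transversal if and only if $L_i^{-1} \cup \{e\}$ is a right transversal, and $S^{-1} = S$.

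The main obstacle I anticipate is bookkeeping in the \eqref{it:itd} $\Rightarrow$ \eqref{it:ita} direction: the adjacency condition in $\cay(G,S)$ is one-sided ($yx^{-1} \in S$), so neighbours of $u$ in $H$ correspond to elements $h$ with $hu^{-1} \in S$ or $uh^{-1} \in S$ depending on orientation, and I must check that the left-transversal property of $L_i \cup \{e\}$ translates into exactly one such $h$ per block. Getting the direction of cosets right (left versus right, and whether I count $s \in S$ with $u \in sH$ or $h \in H$ with $h \in Su$) is the delicate point; once the convention is pinned down, the counting is routine, and the left/right symmetry in \eqref{it:itd} $\Leftrightarrow$ \eqref{it:ite} is immediate from $S = S^{-1}$.
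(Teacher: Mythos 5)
Your proposal is correct and follows essentially the same route as the paper: the cycle (a)$\Rightarrow$(b)$\Rightarrow$(c)$\Rightarrow$(d)$\Rightarrow$(a) with the same counting arguments (Lemma~\ref{le:le2.2} for the transversal $L$, fibres of size $k$ over the nontrivial cosets, picking the $i$-th element of each block to form the $L_i$), and (d)$\Leftrightarrow$(e) via $R_i = L_i^{-1}$ using $S^{-1}=S$. The adjacency-convention bookkeeping you flag for (d)$\Rightarrow$(a) resolves exactly as you anticipate, matching the paper's argument.
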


\begin{proof}
Denote $\Ga = \cay(G,S)$ and $n = |G:H|$.

(\ref{it:ita}) $\Rightarrow$ (\ref{it:itb})\,
Suppose that $H$ is a $(0,k)$-regular set in $\Gamma$. By Lemma~\ref{le:le2.2}, there exists a subset $L$ of $S$ such that $L \cup \{e\}$ is a left transversal of $H$ in $G$. Note that $L \subseteq G \setminus H$ and each $s \in L$ is adjacent to $e \in H$ in $\Ga$. Since $H$ is a $(0,k)$-regular set in $\Gamma$, there exist exactly $k-1$ distinct elements $h_1, h_2, \ldots, h_{k-1} \in H \setminus \{e\}$ which are adjacent to $s$ in $\Ga$. That is, there exist exactly $k-1$ distinct elements $s_1,s_2,\ldots,s_{k-1} \in S$ such that $s = s_1h_1 = s_2h_2 =\cdots=s_{k-1}h_{k-1}$. Note that $s \ne s_i$ for each $i$ as $h_i \ne e$. If $s_i \in L$ for some $i$, then $sH = (s_i h_i)H = s_i H$. Since both $s$ and $s_i$ are in the left transversal $L \cup \{e\}$ of $H$ in $G$, it follows that $s = s_i$ and hence $h_i = e$, which is a contradiction. Hence $s_i \notin L$ for each $1 \le i \le k-1$. Thus, we have proved that for each $s \in L$ there are exactly $k-1$ distinct elements $s_{1}, s_2, \ldots,s_{k-1} \in S\setminus L$ such that $s_{1}H = s_{2}H = \cdots = s_{k-1}H = sH$.

(\ref{it:itb}) $\Rightarrow$ (\ref{it:itc})\,
Suppose that $L$ is a subset of $S$ which satisfies condition (b). Since $L\cup \{e\}$ is a left transversal of $H$ in $G$, we have $|L|=n-1$, $H \cap S = \emptyset$, and $sH\neq s'H$ for any distinct elements $s, s'$ of $L$. By our assumption, for each $s \in L$, the subset $T_s = \{g \in S: gH = sH\}$ of $S$ has size $k$. Moreover, we have $T_s \cap T_{s'} = \emptyset$ for distinct $s, s' \in L$ and $\cup_{s \in L} T_s = S$. Thus, $\{T_s: s\in L\}$ is a partition of $S$ into $n-1$ parts such that $sH = s'H$ for $s, s'$ in the same part and $sH \ne s'H$ for $s, s'$ in different parts.

(\ref{it:itc}) $\Rightarrow$ (\ref{it:itd})\,
Suppose that $\{T_1, T_2, \ldots, T_{n-1}\}$ is a partition of $S$ satisfying condition (c). Then $|T_j|=k$ for $1 \leq j \leq n-1$. Write $T_j = \{s_{1,j}, s_{2,j}, \ldots, s_{k,j}\}$ for $1 \leq j \leq n-1$. By our assumption, we have $s_{1,j}H = s_{2,j}H = \cdots =  s_{k,j}H$ for $1 \leq j \leq n-1$. Set $L_i = \{s_{i,1}, s_{i,2}, \ldots, s_{i,n-1}\}$ for $1 \le i \le k$. Then $L_1, L_2, \ldots, L_k$ form a partition of $S$. Moreover, by \eqref{le:le1.1} and our assumption, $L_i\cup\{e\}$ is a left transversal of $H$ in $G$ for each $1\leq i\le k$.

(\ref{it:itd}) $\Rightarrow$ (\ref{it:ita})\,
Suppose that condition (d) holds. Since each $L_i \cup \{e\}$ is a left transversal of $H$ in $G$, we have $|L_i| = n - 1$, $|S| = k(n-1)$,  and for any $u \in G \setminus H$ there exists exactly one element $s_{i(u)} \in L_i$ such that $u \in s_{i(u)}H$, say, $u = s_{i(u)}h_{i(u)}$ for some $h_{i(u)} \in H$. Since $\{L_1,L_2,\ldots,L_k\}$ is a partition of $S$, we have $s_{i(u)} \ne s_{j(u)}$ whenever $i \neq j$ and hence $h_{1(u)}, h_{2(u)}, \ldots, h_{k(u)}$ are distinct neighbours of $u$ in $H$. Since $\{L_1,L_2,\ldots,L_k\}$ is a partition of $S$ and each $L_i \cup \{e\}$ is a left transversal of $H$ in $G$, these are all neighbours of $u$ in $H$ and moreover $H \cap S = \emptyset$. Hence $H$ is an independent set of $\Ga$. Thus, any $u \in G \setminus H$ is adjacent to exactly $k$ distinct vertices in $H$ and therefore $H$ is a $(0,k)$-regular set in $\Gamma$.

(\ref{it:itd}) $\Rightarrow$ (\ref{it:ite})\,
Suppose that condition (\ref{it:itd}) holds. Since $\{L_1,L_2,\ldots,L_k\}$ is a partition of $S$ and $L_i \cup\{e\}$, $1\leq i\leq k$, are left transversals of $H$ in $G$, the elements of $S$ can be listed as $S = \{s_{i,j}: 1\leq i\leq k, 1\leq j\leq n-1\}$ such that $L_i = \{s_{i,1}, s_{i,2}, \ldots, s_{i,n-1}\}$ for $1 \le i \le k$ and
$s_{1,j}H=s_{2,j}H=\cdots=s_{k,j}H$ for $1 \le j \le n-1$. Thus $Hs^{-1}_{1,j} = Hs^{-1}_{2,j} = \cdots = Hs^{-1}_{k,j}$. Since $S$ is inverse-closed, we have $\{s^{-1}_{1,j}, s^{-1}_{2,j}, \ldots, s^{-1}_{k,j}\} \subseteq S$ for $1 \le j \le n-1$. Set $R_i = \{s^{-1}_{i,1}, s^{-1}_{i,2}, \ldots, s^{-1}_{i,n-1}\}$ for $1\leq i\leq k$. Then $\{R_1,R_2,\ldots,R_k\}$ is a partition of $S$. Since each $L_i \cup\{e\}$ is a left transversal of $H$ in $G$, we see that $H, Hs^{-1}_{i,1}, Hs^{-1}_{i,2}, \ldots, Hs^{-1}_{i, n-1}$ are pairwise distinct. This together with $n = |G:H|$ implies that $R_i \cup \{e\}$ is a right transversal of $H$ in $G$ for $1\leq i\leq k$.

(\ref{it:ite}) $\Rightarrow$ (\ref{it:itd})\,
This is similar to the proof that (\ref{it:itd}) implies (\ref{it:ite}).
\end{proof}

\section{$(0, k)$-regular sets and equitable partitions}
\label{sec:eq-partn}

Denote by $A(\Gamma)$ the adjacency matrix of a graph $\Gamma$. A partition $\pi=\{V_1, V_2, \ldots, V_r\}$ of $V(\Gamma)$ is called an {\em equitable partition} of $\Ga$ (see, for example, \cite{god01}) if there is an $r\times r$ matrix $M = (m_{ij})$, called the {\em quotient matrix} of $\pi$, such that for any $1\leq i, j\leq r$ every vertex in $V_i$ has exactly $m_{ij}$ neighbours in $V_j$.
It is known that, for an equitable partition $\pi$ with quotient matrix $M$, the characteristic polynomial of $M$ divides that of $A(\Gamma)$ (see, for example, \cite{god01}), or, equivalently, the spectrum of $M$ is contained in the spectrum of $A(\Gamma)$ (see, for example, \cite{Bai19}). In the case when $\Gamma$ is a connected $d$-regular graph, the equitable partition $\pi$ is said to be {\em $\mu$-equitable} if all eigenvalues of $M$ other than its principal eigenvalue $d$ are equal to $\mu$, and a non-empty proper set $D$ of $V(\Gamma)$ is called a {\em $\mu$-perfect set} of $\Gamma$ if $\{D, V(\Gamma)\setminus D\}$ is an $\mu$-equitable partition of $\Gamma$ .

\begin{lem}
\cite[Proposition 2.1]{Bai19}
\label{le:le4.12}
Let $\Gamma$ be a connected regular graph, and let $\pi=\{V_1, V_2, \ldots,V_r\}$ be a partition of $V(\Gamma)$. If $\pi$ is $\mu$-equitable, then each $V_i$ is a $\mu$-perfect set of $\Gamma$. Conversely, if $V_1, V_2, \ldots, V_r$ are all $\mu$-perfect sets of $\Gamma$, then $\pi$ is $\mu$-equitable.
\end{lem}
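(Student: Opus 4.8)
The plan is to work throughout with the \emph{quotient matrix} $M = (m_{ij})$ of $\pi$ and a symmetrised form of it. First I would record the two structural facts I will lean on. Since $\Gamma$ is $d$-regular, each row of $M$ sums to $d$, so $d$ is an eigenvalue of $M$ with the all-ones vector as right eigenvector; as $\Gamma$ is connected, $d$ is the simple principal eigenvalue. Counting in two ways the edges between $V_i$ and $V_j$ gives $|V_i| m_{ij} = |V_j| m_{ji}$, which shows that $N := D^{1/2} M D^{-1/2}$ is symmetric, where $D = \mathrm{diag}(|V_1|, \ldots, |V_r|)$. In particular $M$ is similar to $N$, hence diagonalizable with the same real spectrum as $N$, and $N$ has eigenvector $w = (\sqrt{|V_1|}, \ldots, \sqrt{|V_r|})^\top$ for the eigenvalue $d$, with $\|w\|^2 = |V(\Gamma)|$.

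For the forward direction, I would assume $\pi$ is $\mu$-equitable, so the spectrum of $N$ consists of $d$ together with $\mu$ repeated $r-1$ times. Connectivity forces $\mu \ne d$ when $r \ge 2$, since $\mu = d$ would make $M = dI$ and leave no edges between distinct parts (the case $r = 1$ is vacuous). By the spectral theorem applied to the symmetric matrix $N$,
\[
N = \mu I + (d-\mu)\,\frac{w w^\top}{|V(\Gamma)|},
\]
and undoing the scaling yields $m_{ij} = (d-\mu)\,|V_j|/|V(\Gamma)|$ for all $i \ne j$ and $m_{ii} = \mu + (d-\mu)\,|V_i|/|V(\Gamma)|$. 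The crucial point is that, for a fixed block $V_i$, the count $m_{ji}$ is \emph{independent of} $j$: every vertex outside $V_i$ has exactly $(d-\mu)|V_i|/|V(\Gamma)|$ neighbours in $V_i$, while every vertex of $V_i$ has $m_{ii}$ neighbours in $V_i$. Hence $\{V_i, V(\Gamma)\setminus V_i\}$ is equitable, and the non-principal eigenvalue of its $2\times 2$ quotient matrix is $m_{ii} - (d-\mu)|V_i|/|V(\Gamma)| = \mu$, so $V_i$ is a $\mu$-perfect set.

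For the converse, I would suppose every $V_i$ is $\mu$-perfect. Then for each $i$ there are constants $a_i, b_i$ with $a_i - b_i = \mu$ such that every vertex of $V_i$ has $a_i$ neighbours in $V_i$ and every vertex outside $V_i$ has $b_i$ neighbours in $V_i$. Reading this off block by block shows that a vertex of $V_i$ has $a_i$ neighbours in $V_i$ and $b_j$ neighbours in each $V_j$ with $j \ne i$; all these counts are constant on $V_i$, so $\pi$ is equitable with $m_{ii} = a_i = b_i + \mu$ and $m_{ij} = b_j$ for $i \ne j$. Writing $b = (b_1, \ldots, b_r)^\top$, this says $M = \mathbf{1}\,b^\top + \mu I$, a rank-one perturbation of $\mu I$ whose eigenvalues are $\mu + \sum_i b_i = d$ once and $\mu$ with multiplicity $r-1$; hence $\pi$ is $\mu$-equitable.

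I expect the main obstacle to be the forward direction, specifically converting the spectral hypothesis on $M$ into the entrywise description that makes $m_{ji}$ independent of $j$. The linchpin is the symmetrisation $N = D^{1/2} M D^{-1/2}$: without it one only knows the eigenvalues of $M$, not that $M - \mu I$ is genuinely rank one, and it is precisely the rank-one spectral-projection identity for $N$ that delivers the uniform off-diagonal counts. Verifying $\mu \ne d$ from connectivity is a small but necessary check so that the projection $(d-\mu)\,w w^\top/\|w\|^2$ is non-degenerate.
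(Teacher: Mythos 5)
Your argument is correct, but note that the paper offers no proof of this lemma to compare against: it is imported verbatim from \cite[Proposition 2.1]{Bai19}. Judged on its own, your proof works. The forward direction is the only place where something could go wrong, and you supply exactly the needed ingredient: the double-counting identity $|V_i|\,m_{ij}=|V_j|\,m_{ji}$ makes $N=D^{1/2}MD^{-1/2}$ symmetric, the simplicity of the eigenvalue $d$ (inherited from connectivity via the divisibility of characteristic polynomials) together with the spectral theorem forces $N-\mu I=(d-\mu)\,ww^\top/|V(\Gamma)|$, and conjugating back gives $m_{ji}=(d-\mu)|V_i|/|V(\Gamma)|$ for all $j\ne i$, so that $\{V_i,V(\Gamma)\setminus V_i\}$ is equitable with second eigenvalue $m_{ii}-m_{ji}=\mu$; the converse is the routine bookkeeping you describe, with $M=\mathbf{1}b^\top+\mu I$ having spectrum $\{d,\mu,\ldots,\mu\}$ because its row sums are $d$. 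The more common route to this statement (and the one underlying the cited source) works with the adjacency matrix rather than the quotient matrix: $D$ is a $\mu$-perfect set exactly when $\chi_D-\frac{|D|}{|V(\Gamma)|}\mathbf{1}$ is a $\mu$-eigenvector of $A(\Gamma)$, and $\pi$ is $\mu$-equitable exactly when this holds for every part, from which both implications are immediate. The two arguments are essentially dual to one another; yours stays entirely inside the $r\times r$ quotient at the price of the symmetrisation step, while the eigenvector formulation avoids the rank-one projection computation. Minor caveats only: the statement is vacuous (indeed degenerate) for $r=1$, which you flag, and your check that $\mu\ne d$ via connectivity is indeed necessary for the rank-one decomposition to say anything.
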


\begin{lem}
\label{le:le4.13}
Let $\Gamma$ be a connected regular graph, and let $\pi=\{V_1, V_2, \ldots, V_r\}$ be a $(-k)$-equitable partition of $V(\Gamma)$. If for some $i$, $V_i$ is an independent set of $\Gamma$, then $k$ is a positive integer and $V_i$ is a $(0,k)$-regular set in $\Gamma$.
\end{lem}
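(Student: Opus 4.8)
The plan is to localize to the single block $V_i$ and read everything off a $2 \times 2$ quotient matrix. Write $\Gamma$ for the connected $d$-regular graph and set $D = V_i$. Since $\pi$ is $(-k)$-equitable, Lemma~\ref{le:le4.12} tells us that every block, in particular $D$, is a $(-k)$-perfect set; that is, $\{D, V(\Gamma)\setminus D\}$ is itself a $(-k)$-equitable partition. In particular $D$ is a nonempty proper subset of $V(\Gamma)$, and there are nonnegative integers $a, b$ such that every vertex of $D$ has exactly $a$ neighbours in $D$ and every vertex of $V(\Gamma)\setminus D$ has exactly $b$ neighbours in $D$. This opening reduction, passing from the possibly many-block partition $\pi$ to the two-block equitable partition $\{D, V(\Gamma)\setminus D\}$, is the only genuine step; everything afterwards is bookkeeping.

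Next I would compute the quotient matrix. Using $d$-regularity, a vertex of $D$ has $d-a$ neighbours outside $D$ and a vertex outside $D$ has $d-b$ neighbours outside $D$, so the quotient matrix of $\{D, V(\Gamma)\setminus D\}$ is
\[
M = \begin{pmatrix} a & d-a \\ b & d-b \end{pmatrix}.
\]
All row sums equal $d$, so the all-ones vector is an eigenvector with the principal eigenvalue $d$; since the trace of $M$ is $a + (d-b)$, the other eigenvalue is $a - b$. By the definition of $(-k)$-equitability this second eigenvalue equals $-k$, giving $b - a = k$. Because $D = V_i$ is an independent set, every vertex of $D$ has no neighbour in $D$, i.e. $a = 0$, and hence $b = k$. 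As $b$ is a count of neighbours it is a nonnegative integer, so $k = b$ is a nonnegative integer.

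Finally I would establish strict positivity and conclude. Double-counting the edges between $D$ and $V(\Gamma)\setminus D$ and substituting $a = 0$ gives $|D|\,d = |V(\Gamma)\setminus D|\,k$. Since $\Gamma$ is connected and $D$ is a proper nonempty block, $\Gamma$ has at least two vertices, so $d \ge 1$; as $|D| \ge 1$, the left-hand side is positive, forcing $k \ge 1$. Thus $k$ is a positive integer, and the relations $a = 0$ and $b = k$ say precisely that every vertex of $D$ has $0$ neighbours in $D$ while every vertex of $V(\Gamma)\setminus D$ has exactly $k$ neighbours in $D$, i.e. $D = V_i$ is a $(0,k)$-regular set in $\Gamma$. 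I expect no real obstacle here beyond keeping track that $k$ is simultaneously an integer (from $b$ being a neighbour count) and strictly positive (from connectivity), which together with $a = 0$ pins down the $(0,k)$-regular structure.
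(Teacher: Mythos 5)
Your proof is correct and follows essentially the same route as the paper's: reduce to the two-block partition $\{V_i, V(\Gamma)\setminus V_i\}$ via Lemma~\ref{le:le4.12}, write down the $2\times 2$ quotient matrix, and identify its non-principal eigenvalue with $-k$ to force every outside vertex to have exactly $k$ neighbours in $V_i$. Your explicit argument that $k\ge 1$ (via double counting and connectivity) is a small detail the paper's proof leaves implicit, but it is not a different approach.
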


\begin{proof}
Denote by $d$ the degree of $\Ga$. We may assume without loss of generality that $V_1$ is an independent set of $\Gamma$. Since $\pi$ is $(-k)$-equitable, by Lemma~\ref{le:le4.12}, $\pi' = \{V_1, V(\Gamma) \setminus V_1\}$ is a $(-k)$-equitable partition of $V(\Gamma)$. Since $V_1$ is an independent set of $\Ga$, the quotient matrix of $\pi'$ is of the form
$$
M' = \pmat{0 & a_{12}\\ d & d-a_{12}}
$$
for some integer $a_{12}$. Note that $M'$ has eigenvalues $d$ and $-a_{12}$. On the other hand, by the definition of a $(-k)$-equitable partition, both $d$ and $-k$ must be eigenvalues of $M'$. Hence $a_{12} = k$. In other words, any vertex in $V(\Gamma)\setminus V_1$ has exactly $k$ neighbours in $V_1$ and therefore $V_1$ is a $(0,k)$-regular set in $\Gamma$.
\end{proof}

The main result in this section is as follows.

\begin{thm}
\label{th:th4.11}
Let $G$ be a group, $H$ a proper subgroup of $G$, $S$ an inverse-closed subset of $G \setminus \{e\}$ such that $S \subset N_{G}(H)$, and $k$ a positive integer. Then the following conditions are equivalent:
\begin{enumerate}[\rm (a)]
\item $H$ is a $(0,k)$-regular set in $\cay(G,S)$;
\item $H\cap S = \emptyset$ and $\cay(G,S)$ has a $(-k)$-equitable partition with exactly $|G:H|$ blocks and with $H$ as one of the blocks;
\item for every integer $a$ with $0 \le a \leq \frac{|S|}{k} - 1$, $\cay(G,S)$ admits an $(ak,(a+1)k)$-regular set which contains $H$ as a subset.
\end{enumerate}
\end{thm}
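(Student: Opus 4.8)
The plan is to prove the theorem by establishing a cycle of implications among the three conditions, leaning heavily on the equivalences already developed. The statement to prove is Theorem~\ref{th:th4.11}, relating the property that $H$ is a $(0,k)$-regular set to the existence of a $(-k)$-equitable partition and to the existence of a nested family of $(ak,(a+1)k)$-regular sets containing $H$. I would organize the argument as (a)$\Rightarrow$(b)$\Rightarrow$(a) to pin down the equivalence of the first two, and then (a)$\Leftrightarrow$(c) to bring in the third.

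For (a)$\Rightarrow$(b), suppose $H$ is a $(0,k)$-regular set in $\Gamma = \cay(G,S)$. That $H \cap S = \emptyset$ is immediate since $H$ is an independent set (each $h \in H$ has no neighbour in $H$, so $h \cdot h'^{-1} \notin S$ for $h,h' \in H$; in particular $e \notin S$ forces $H \cap S = \emptyset$ after using the regular representation). Because $S \subset N_G(H)$, Lemma~\ref{le:le2.0} together with $sH = Hs$ shows every left coset $gH$ is again a $(0,k)$-regular set, and by Theorem~\ref{th:th2.0} we get a $k$-covering $f : \Gamma \to K_n$ with $n = |G:H|$ whose vertex-fibers are exactly the cosets of $H$. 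The partition $\pi$ into these $n$ cosets is then equitable: between any two distinct fibers each vertex sees exactly $k$ neighbours (the $k$-covering condition), and within a fiber it sees none. Its quotient matrix is $k(J_n - I_n)$, whose eigenvalues are $k(n-1) = d$ once and $-k$ with multiplicity $n-1$; hence $\pi$ is $(-k)$-equitable with $H$ as one block. For (b)$\Rightarrow$(a), given such a $(-k)$-equitable partition with $H$ a block, note $H$ is an independent set because $H \cap S = \emptyset$, so Lemma~\ref{le:le4.13} applies directly and yields that $H$ is a $(0,k)$-regular set.

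For the equivalence with (c), the direction (c)$\Rightarrow$(a) is the easy one: take $a = 0$, which gives a $(0,k)$-regular set containing $H$; since all $(0,k)$-regular sets in a regular graph have the common size $|H|$ (from the counting in~\eqref{le:le1.1}) and this set contains $H$, it must equal $H$. For (a)$\Rightarrow$(c), I would use the covering picture from (a)$\Rightarrow$(b) together with part (a) of Lemma~\ref{le:le1.4}. Writing $n = 1 + |S|/k$, the fibers of $f : \Gamma \to K_n$ are $(0,k)$-regular sets with $H = f^{-1}(x_0)$ for some vertex $x_0$ of $K_n$. Given $a$ with $0 \le a \le |S|/k - 1 = n - 2$, choose a set $A$ of $a$ vertices of $K_n$ not containing $x_0$, and put $D = f^{-1}(\{x_0\} \cup A)$. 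In $K_n$ the set $\{x_0\}\cup A$ (of size $a+1$) is an $(a, a+1)$-regular set, and pulling back through the $k$-covering multiplies both parameters by $k$, so $D$ is an $(ak,(a+1)k)$-regular set in $\Gamma$ containing $H$; the bound on $a$ guarantees that $D$ remains a proper subset.

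The main obstacle I anticipate is justifying that pulling back an $(a,a+1)$-regular set of $K_n$ through a $k$-covering yields an $(ak,(a+1)k)$-regular set, since Lemma~\ref{le:le1.4}(a) is stated only for the $(0,1)$ case. I expect this to follow by the same fiber-counting argument: a vertex $u$ with $f(u) = x$ sees exactly $k$ neighbours in each fiber over a vertex of $K_n$ adjacent to $x$, so the count of neighbours of $u$ inside $D$ is $k$ times the number of $K_n$-neighbours of $x$ lying in $\{x_0\}\cup A$, which is $ak$ if $x \in \{x_0\}\cup A$ and $(a+1)k$ otherwise. I would either prove a one-line generalization of Lemma~\ref{le:le1.4}(a) to arbitrary $(a,b)$-regular sets or inline this counting. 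A secondary point to handle carefully is that the equivalences all presume $\Gamma$ is connected (needed for Lemma~\ref{le:le4.12} and Lemma~\ref{le:le4.13}); if $S$ does not generate $G$ one should restrict attention to the relevant component or note that the equitable-partition statements still go through blockwise, so I would make the connectedness hypothesis explicit or argue componentwise.
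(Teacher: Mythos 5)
Your proposal is correct in substance and follows essentially the same route as the paper: (b)$\Rightarrow$(a) via Lemma~\ref{le:le4.13}, (a)$\Rightarrow$(b) by showing the partition of $G$ into the cosets $sH$, $s\in L\cup\{e\}$, is $(-k)$-equitable, and (a)$\Rightarrow$(c) by taking the union of $H$ with $a$ further cosets and counting $k$ neighbours per coset. Your covering-space phrasing is only a repackaging: where you write down the quotient matrix $k(J_n-I_n)$ of the full $n$-block partition, the paper instead checks that each coset is a $(-k)$-perfect set via a $2\times 2$ quotient matrix and invokes Lemma~\ref{le:le4.12}; and your pullback of the $(a,a+1)$-regular set $\{x_0\}\cup A$ of $K_n$ is exactly the paper's direct count for $H_a=H\cup s_1H\cup\cdots\cup s_aH$, so inlining that computation (as you propose) is precisely what the paper does. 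Your two flagged subtleties (the $(a,b)$-generalization of Lemma~\ref{le:le1.4}(a), and connectedness for the equitable-partition lemmas) are real and are silently passed over in the paper.

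One caveat on (c)$\Rightarrow$(a): you argue that the $a=0$ set must equal $H$ because ``all $(0,k)$-regular sets have the common size $|H|$.'' By \eqref{le:le1.1} that common size is $k|G|/(k+|S|)$, which equals $|H|$ only once one knows $|S|=k(|G:H|-1)$ --- i.e.\ only after (a) is already established --- so this justification is circular. The implication is genuinely delicate when a $(0,k)$-regular set can properly contain $H$ (e.g.\ for very small $H$ one can have a $(0,k)$-regular set strictly containing $H$ while $H$ itself is not one). The paper's own proof disposes of this direction with the bare phrase ``setting $a=0$,'' so your attempt is no weaker than the source, but the size argument as you state it does not close the gap.
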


\begin{proof}
Denote $\Ga = \cay(G,S)$ and $n = |G:H|$.

(b) $\Rightarrow$ (a)\, Suppose that condition (b) holds. Since $H$ is a subgroup of $G$, the assumption $H\cap S=\emptyset$ implies that $H$ is an independent set of $\Ga$. So, by Lemma \ref{le:le4.13}, $H$ is a $(0,k)$-regular set in $\Ga$.

(a) $\Rightarrow$ (b)\, Suppose that $H$ is a $(0,k)$-regular set in $\Ga$. Then $H$ is an independent set of $\Ga$ and hence $H\cap S=\emptyset$. By Lemma \ref{le:le2.2}, there exists a subset $L$ of $S$ with size $|L|=|G:H|-1$ such that $L\cup \{e\}$ is a left transversal of $H$ in $G$.
Let $L=\{s_1, s_2,\ldots, s_{n-1}\}$. Since $S\subset N_{G}(H)$, by Lemma \ref{le:le2.0}, $s_iH = Hs_{i}$ is a $(0, k)$-regular set in $\Ga$ for $1 \le i \le n-1$. Using this and a straightforward computation, we obtain that for $1 \le i \le n-1$, the quotient matrix of the partition $\{s_iH, G \setminus s_iH\}$ has exactly two eigenvalues, namely $-k$ and $|S|$. Thus, $\{s_iH, G \setminus s_iH\}$  is a $(-k)$-equitable partition of $\Ga$ for $1 \le i \le n-1$, and therefore $H, s_1H, s_2H,\ldots, s_{n-1}H$ are all $(-k)$-perfect sets of $\Ga$. It follows from Lemma \ref{le:le4.12} that $\{H, s_1H, s_2H,\ldots, s_{n-1}H\}$ is a $(-k)$-equitable partition of $\Ga$ with exactly $|G:H|$ blocks and with $H$ as one of the blocks.

(c) $\Rightarrow$ (a)\, Setting $a=0$ in (c), we obtain (a).

(a) $\Rightarrow$ (c)\, Suppose that $H$ is a $(0,k)$-regular set in $\cay(G,S)$. Then by what we have proved above, condition (b) is satisfied. By Lemma \ref{le:le2.2}, there exists a subset $L$ of $S$ with $|L|=|G:H|-1$ such that $L\cup \{e\}$ is a left transversal of $H$ in $G$. Let $L=\{s_1, s_2,\ldots, s_{n-1}\}$.
By the statement in (b), $\pi = \{H, s_1H, s_2H, \ldots, s_{n-1}H\}$ is a $(-k)$-equitable partition of $\cay(G,S)$ containing $H$ as a block. For any integer $a$ with $0 \le a \leq \frac{|S|}{k} - 1$, let $H_a=H\cup s_1H\cup\cdots\cup s_aH$ and $\pi_a = \{H_a, s_{a+1}H, \ldots, s_{n-1}H\}$. Set $s_0 = e$. Since $H$ is a $(0,k)$-regular set in $\cay(G,S)$ and $S \subset N_{G}(H)$, by Lemma \ref{le:le2.0}, $s_iH = Hs_i$ is a $(0, k)$-regular set in $\cay(G,S)$ for $0 \le i \le n-1$. Thus, for $0 \leq i, j \leq a$ with $i \ne j$, each $u \in s_iH$ has exactly $k$ neighbours in $s_jH$. Hence each $u \in H_a$ has exactly $ak$ neighbours in $H_a$. On the other hand, each $u \in G \setminus H_a$ has exactly $k$ neighbours in $s_iH$, for $0\leq i\leq a$. So each $u \in G \setminus H_a$ has exactly $(a+1)k$ neighbours in $H_a$. Therefore, $H_a$ is an $(ak, a(k+1))$-regular set in $\cay(G,S)$.
Obviously, it contains $H$ as a subset.
\end{proof}

In general, the three conditions in Theorem \ref{th:th4.11} may not be equivalent if $H$ is a subset of $G$ satisfying $sH = Hs$ for any $s\in S$ but not a subgroup of $G$. However, we have the following proposition which shows that (a) and (c) in Theorem \ref{th:th4.11} are equivalent when $H$ is such a subset and $k=1$.

\begin{pro}
\label{co:co6.2}
Let $G$ be a group, $D$ a proper subset of $G$, and $S$ an inverse-closed subset of $G \setminus \{e\}$ such that $sD=Ds$ for any $s\in S$. Then $\cay(G,S)$ admits $D$ as a $(0,1)$-regular set if and only if, for every integer $a$ with $0 \leq a \leq |S| - 1$, $\cay(G,S)$ admits an $(a, a+1)$-regular set which contains $D$ as a subset.
\end{pro}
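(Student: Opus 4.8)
The plan is to mirror the proof of the equivalence of (a) and (c) in Theorem~\ref{th:th4.11}, with the coset partition $\{gH\}$ used there replaced by the partition of $G$ into translates of $D$. The substantive direction is the necessity (that a $(0,1)$-regular set $D$ yields the required family); the sufficiency I will obtain by specialising to $a=0$ and then upgrading a containment to an equality.

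For the necessity, suppose $D$ is a $(0,1)$-regular set in $\Ga=\cay(G,S)$. Writing $N(x)=Sx$ for the neighbourhood of $x$, independence of $D$ gives $SD\cap D=\emptyset$, while the requirement that each vertex outside $D$ have a unique neighbour in $D$ gives $N(D)=SD=G\setminus D$; double counting the edges between $D$ and its complement then yields $|G|=(|S|+1)|D|$, a special case of \eqref{le:le1.1}. Because $sD=Ds$ for every $s\in S$, Lemma~\ref{le:le2.0} applied to the right translate $Ds$ shows that each $sD$ is again a $(0,1)$-regular set. The key step is to prove that $\{D\}\cup\{sD:s\in S\}$ is a partition of $G$ into $|S|+1$ pairwise disjoint (hence pairwise distinct) $(0,1)$-regular sets: indeed $\bigcup_{s\in S}sD=SD=G\setminus D$ has cardinality $|S|\,|D|$, and since it is covered by the $|S|$ sets $sD$, each of cardinality $|D|$, these translates must be pairwise disjoint. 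Labelling the blocks as $D=D_0,D_1,\dots,D_{|S|}$ and putting $U_a=D_0\cup\cdots\cup D_a$, a direct count (cf. Lemma~\ref{le:le1.4}) shows that each vertex of $U_a$ has exactly $a$ neighbours in $U_a$ and each vertex outside $U_a$ has exactly $a+1$; hence $U_a$ is an $(a,a+1)$-regular set containing $D$, and it is proper whenever $0\le a\le|S|-1$ because at least one block is omitted.

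For the sufficiency, I specialise the hypothesis to $a=0$ to obtain a $(0,1)$-regular set $D'$ with $D\subseteq D'$. Since $D'$ is independent, so is $D$; and since every vertex outside $D'$ has a unique neighbour in $D'$ while every vertex of $D'$ has none, each vertex of $G\setminus D$ has \emph{at most} one neighbour in $D$. Consequently $D$ is a $(0,1)$-regular set as soon as it is a dominating set of $\Ga$, which is equivalent to $D'=D$ and to $|D|=|G|/(|S|+1)$.

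The main obstacle is exactly this last equality. Containment in a perfect code alone yields only the inequality $|D|\le|G|/(|S|+1)$, since $\sum_{g\in G}|(N(g)\cup\{g\})\cap D|=(|S|+1)|D|$ while $|(N(g)\cup\{g\})\cap D|\le 1$ for every $g$; to force equality one needs the matching lower bound, i.e.\ a proof that every vertex of $G$ lies in $D\cup SD=\bigcup_{s\in S\cup\{e\}}sD$. This is the point at which the hypothesis $sD=Ds$ must be used in earnest, and I expect this domination step---rather than the single instance $a=0$---to carry the real weight of the argument, with the feature $k=1$ being essential in that it reduces the perfect-code condition to the single requirement that $D$ be dominating.
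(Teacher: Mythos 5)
Your necessity argument is correct and is essentially the paper's own: the paper likewise observes that each translate $sD=Ds$ is again a $(0,1)$-regular set (via the right-translation lemma) and that $\{eD,s_1D,\dots,s_nD\}$ partitions $G$ (the paper cites a lemma of Lee for this; you instead derive it directly from $|G|=(|S|+1)|D|$ together with $SD=G\setminus D$, which is a legitimate and self-contained substitute), and then forms $D_a=eD\cup s_1D\cup\cdots\cup s_aD$ and counts neighbours exactly as you do. That half is complete.

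The gap is the sufficiency, which you reduce to showing that $D$ is dominating ($D\cup SD=G$) and then leave open, saying you ``expect'' the hypothesis $sD=Ds$ to carry it. It cannot: the step you are missing is not provable from the stated hypotheses. Take $G=\ZZZ_9$, $S=\{1,8\}$ (so $\cay(G,S)=C_9$ and $sD=Ds$ holds automatically) and $D=\{0\}$. Then $\{0,3,6\}$ is a $(0,1)$-regular set containing $D$ and $\{0,1,3,4,6,7\}$ is a $(1,2)$-regular set containing $D$, so the right-hand condition holds for every $a$ with $0\le a\le |S|-1=1$; yet $\{0\}$ is not a $(0,1)$-regular set, since the vertex $4$ has no neighbour in it. So containment of $D$ in an $(a,a+1)$-regular set for every $a$ genuinely does not force $D$ itself to be one. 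For comparison, the paper disposes of this direction with the single sentence ``the sufficiency is trivially true'' (i.e.\ set $a=0$), which commits precisely the error you rightly refused to commit: a $(0,1)$-regular set \emph{containing} $D$ need not \emph{equal} $D$. Your proposal is therefore incomplete as written, but the missing domination step is not something you failed to find --- it is absent because the converse implication requires a stronger hypothesis (for instance $|D|=|G|/(|S|+1)$, or that $D$ itself is a block of the partition in question) before it becomes true.
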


\begin{proof}
The sufficiency is trivially true. To prove the necessity, suppose that $D$ is a $(0, 1)$-regular set in $\cay(G, S)$. Assume that $S=\{s_1, s_2, \ldots, s_n\}$.  For any integer $a$ with $0\leq a\leq |S|-1$, let $D_a = s_0D \cup s_1D \cup \cdots \cup s_aD$, where $s_0 = e$. Since $D$ is a $(0,1)$-regular set in $\cay(G,S)$ and $s_i D = Ds_i$, by \cite[Lemma 3]{Lee01}, $s_i D$ is a $(0, 1)$-regular set in $\cay(G,S)$ for $0 \le i \le n$ and $\{s_0 D, s_1 D, s_2 D, \ldots, s_n D\}$ forms a partition of $G$. Thus, for $0 \leq i, j \leq a$ with $i \ne j$, each $u \in s_i D$ has exactly one neighbour in $s_j D$. Hence each $u \in D_a$ has exactly $a$ neighbours in $D_a$. On the other hand, each $u \in G \setminus D_a$ has exactly one neighbour in $s_i D$, for $0\leq i\leq a$. So each $u \in G \setminus D_a$ has exactly $a+1$ neighbours in $D_a$. Therefore, $D_a$ is an $(a, a+1)$-regular set in $\cay(G,S)$. Obviously, it contains $D$ as a subset.
\end{proof}

A key ingredient in the proof above is the fact that $\{s_0 D, s_1 D, s_2 D, \ldots, s_n D\}$ forms a partition of $G$. The counterpart of this statement may not be true when $k \ge 2$, and this explains why we require $k=1$ in Proposition \ref{co:co6.2}.

\section{Normal subgroups as $(0,k)$-regular sets}
\label{sec:secnor}

In this section we study when a proper normal subgroup of a group is a $(0,k)$-regular set of the group. The first main result in this section is as follows.

\begin{thm}
\label{th:th5.3}
Let $G$ be a group and $H$ a non-trivial proper normal subgroup of $G$. Then for any even integer $k$ between $2$ and $|H|$, $H$ is a $(0, k)$-regular set of $G$.
\end{thm}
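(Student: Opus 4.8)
The plan is to build an explicit connection set $S$ and then invoke Lemma~\ref{co:co2.1}. By the equivalence of (\ref{it:ita}) and (\ref{it:itc}) there, it suffices to produce an inverse-closed set $S \subseteq G \setminus \{e\}$ with $H \cap S = \emptyset$ that partitions into $|G:H|-1$ parts, one lying inside each non-trivial left coset of $H$, with every part of size exactly $k$. Since $H$ is normal, for $g \notin H$ every element of the coset $gH$ has its inverse in $Hg^{-1} = g^{-1}H$; thus inversion induces an involution $gH \mapsto g^{-1}H$ on the set of non-trivial cosets. I would choose the size-$k$ part in each coset compatibly with this involution so that the resulting $S$ is inverse-closed. (Normality is exactly what makes inversion send a left coset onto a single left coset, so it is essential here.)

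First I would treat the cosets paired by inversion. If $C = gH$ satisfies $C \ne C^{-1}$ (equivalently $g^2 \notin H$), then $C$ and $C^{-1}$ are distinct cosets; I pick an arbitrary $k$-subset $S_C \subseteq C$ — possible because $|C| = |H| \ge k$ — and set $S_{C^{-1}} = S_C^{-1}$, a $k$-subset of $C^{-1}$. These two parts are interchanged by inversion and therefore contribute inverse-closed data.

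The main obstacle is the self-paired cosets, i.e.\ $C = gH$ with $C = C^{-1}$ (equivalently $g^2 \in H$ and $g \notin H$), for which I need an inverse-closed subset of $C$ of size exactly $k$. Here inversion restricts to an involution on $C$, whose orbits are fixed points (the involutions of $G$ lying in $C$), say $f$ of them, together with $2$-element orbits, say $p$ of them, so that $f + 2p = |H|$. A subset of $C$ is inverse-closed precisely when it is a union of these orbits, so I must realize $k$ as a sum of some fixed points and some $2$-orbits. This is exactly where the parity of $k$ is used: if $k \le 2p$ I take $k/2$ of the $2$-orbits; if $k > 2p$ I take all $p$ of them together with $k - 2p$ fixed points, which is legitimate because $k - 2p$ is a nonnegative \emph{even} integer and $k - 2p \le |H| - 2p = f$. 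In either case I obtain an inverse-closed subset of $C$ of size exactly $k$. (For odd $k$ this step can fail, for instance when $C$ contains no involutions and the part is forced to have even size; this is precisely why the odd case must be handled separately, as in Theorem~\ref{th:th5.4}.)

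Finally I would assemble $S$ as the union of all the parts constructed above. By construction $S$ is inverse-closed, avoids $e$ and $H$, and meets each non-trivial coset in a single part of size $k$; hence condition (\ref{it:itc}) of Lemma~\ref{co:co2.1} holds, so $H$ is a $(0,k)$-regular set in $\cay(G,S)$ and therefore a $(0,k)$-regular set of $G$. The only genuine content is the parity counting for self-paired cosets; everything else is routine bookkeeping with cosets and the normality of $H$.
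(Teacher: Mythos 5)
Your proof is correct, and it takes a genuinely different route from the paper's. The paper proves the case $k=2$ first by building two disjoint right transversals of $H$ whose union is inverse-closed (via condition (\ref{it:ite}) of Lemma~\ref{co:co2.1}), with separate sub-arguments for $|H|=2$ and $|H|\ge 3$ and repeated ad hoc replacements of elements $r_i$ by $hr_i$ to repair inverse-closedness, and then runs an induction on $k$ in steps of $2$, adding two further transversals at each stage with the same kind of repair work. You instead go directly through condition (\ref{it:itc}) of Lemma~\ref{co:co2.1}: all that is needed is an inverse-closed $S$ meeting each nontrivial coset in exactly $k$ elements, and since normality makes inversion permute the nontrivial left cosets, the whole problem reduces to the self-paired cosets, where the orbit structure of the inversion involution (singletons at involutions, $2$-orbits elsewhere, $f+2p=|H|$) lets you realize any even $k\le|H|$ as a union of orbits. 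Your parity bookkeeping there is sound: for $k\le 2p$ take $k/2$ two-orbits, and for $k>2p$ take all $p$ two-orbits plus $k-2p\le f$ fixed points (only the inequality, not the evenness of $k-2p$, is actually needed in that sub-case). The payoff of your argument is that it is non-inductive, substantially shorter, and makes completely transparent where evenness of $k$ enters — namely that a self-paired coset containing no involutions admits only even-sized inverse-closed subsets — which also explains at a glance why odd $k$ requires the extra hypothesis of Theorem~\ref{th:th5.4}. The paper's transversal-based construction is more explicit about the resulting connection set but buys nothing extra for this statement.
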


\begin{proof}
Set $n = |G:H|$. We first prove the result in the case when $k = 2$. Take a subset $R = \{r_1, \ldots, r_{n-1}\}$ of $G \setminus \{e\}$ such that $R \cup \{e\}$ is a right transversal of $H$ in $G$. Since $H$ is normal in $G$, $R^{-1} \cup \{e\}$ is also a right transversal of $H$ in $G$. If $R \cap R^{-1} = \emptyset$, then by Lemma \ref{co:co2.1}, $H$ is a $(0,2)$-regular set in $\cay(G, R \cup R^{-1})$ and hence is a $(0,2)$-regular set of $G$.

Assume $|R\cap R^{-1}| \geq1$ from now on. Without loss of generality we may assume that $R\cap R^{-1} = \{r_1, r_2,\ldots, r_{m}\}$, where $1 \le m \le n-1$. By Lemma \ref{co:co2.1}, it suffices to construct two disjoint subsets $R_{1}, R_{2}$ of $G$ such that $R_{1}\cup R_{2} = (R_{1}\cup R_{2})^{-1}$ and each of $R_{1}\cup\{e\}$ and $R_{2}\cup \{e\}$ is a right transversal of $H$ in $G$. We deal with the following two cases separately.

\smallskip
\textsf{Case 1.} $|H|=2$ and $H=\{e, h\}$.
\smallskip

Define $Q$ to be the subset of $G$ obtained from $R^{-1}$ by replacing each involution $r_i \in R\cap R^{-1}$ by $hr_i$ and retaining all other elements of $R^{-1}$. Since $Hr_i = Hhr_i$ for each involution $r_i \in R\cap R^{-1}$, $Q \cup \{e\}$ is a right transversal of $H$ in $G$. Since both $r_i$ and $h$ are involutions and $H$ is normal in $G$, we have $r_{i}^{-1} h^{-1} = r_{i}h = h r_{i}$ and hence $(R\cup Q)^{-1}=R\cup Q$.

If $R\cap Q=\emptyset$, then by Lemma \ref{co:co2.1}, $H$ is a $(0,2)$-regular set in $\cay(G, R \cup Q)$ and hence $H$ is a $(0,2)$-regular set of $G$.

Assume $R\cap Q\neq\emptyset$ in the sequel. Without loss of generality we may assume that $R\cap Q=\{r_1, r_2, \ldots, r_t, r_1^{-1},\ldots, r_t^{-1}\}\subseteq R\cap R^{-1}$, where $1 \leq t \leq \frac{m}{2}$. Since both $R \cup \{e\}$ and $R^{-1} \cup \{e\}$ are right transversals of $H$ in $G$, we have $Hr_i^{-1}\neq Hr_i$ for each $r_i \in R\cap Q$. Let $R_{1}$ be obtained from $Q$ by replacing $r_i, r_i^{-1}$ by $hr_i, hr_i^{-1}$, respectively, for $1 \le i \le t$, but retaining all other elements. Since $Hr_i=Hhr_i$ and $Hr_i^{-1}=Hhr_i^{-1}$, $R_{1} \cup \{e\}$ is a right transversal of $H$ in $G$. Set $R_{2}=R$. Since $h^2=e$, we have $(R_1 \cup R_{2})^{-1} = R_{1} \cup R_{2}$. Moreover, $R_{1}\cap R_{2}=\emptyset$ and each of $R_{1}\cup\{e\}$ and $R_{2}\cup \{e\}$ is a right transversal of $H$ in $G$. Thus, by Lemma \ref{co:co2.1}, $H$ is a $(0,2)$-regular set in $\cay(G, R_{1} \cup R_{2})$ and hence a $(0,2)$-regular set of $G$.

\smallskip
\textsf{Case 2.} $|H|\geq 3$.
\smallskip

For all involutions $r_i \in R\cap R^{-1}$ such that $Hr_i$ contains an involution, say, $hr_i$, replace $r_i$ in $R^{-1}$ by $hr_i$ and retain all other elements of $R^{-1}$. Denote by $Q$ the subset of $G$ obtained this way.
If $r_i \in R\cap R^{-1}$ is an involution such that $Hr_i$ has no involutions, then there exist distinct elements $hr_i, h'r_i$ of $Hr_i$ such that $(hr_i)^{-1}=h'r_i$ as $H$ is normal in $G$. For all such involutions $r_i$, replace $r_i$ in $R$ by $hr_i$ to obtain $Q_0$ and replace $r_i^{-1}$ in $Q$ by $h'r_i$ to obtain $Q_1$.
Since $H(hr_i)=Hr_i=H(h'r_i)$, both $Q_0 \cup \{e\}$ and $Q_1 \cup \{e\}$ are transversals of $H$ in $G$. Moreover, $(Q_0\cup Q_1)^{-1} = Q_0\cup Q_1$.

If $Q_0\cap Q_1=\emptyset$, then by Lemma \ref{co:co2.1}, $H$ is a $(0,2)$-regular set in $\cay(G, Q_0 \cup Q_1)$ and hence a $(0,2)$-regular set of $G$.

Assume $Q_0\cap Q_1\neq\emptyset$ in the sequel. Without loss of generality we may assume that $Q_0\cap Q_1=\{r_1, r_2, \ldots, r_t, r_1^{-1},r_2^{-1},\ldots, r_t^{-1}\}\subseteq R\cap R^{-1}$, where $1 \leq t \leq \frac{m}{2}$. Since both $R \cup \{e\}$ and $R^{-1} \cup \{e\}$ are right transversals of $H$ in $G$, we have $Hr_i^{-1}\neq Hr_i$ for each $r_i \in Q_0\cap Q_1$. Since $H$ is normal in $G$ and $r_i\neq r_i^{-1}$, there exist elements $h, h'$ of $H$ such that $hr_i \neq (hr_i)^{-1} = h'r_{i}^{-1}$. Define $R_{1}$ to be the subset of $G$ obtained from $Q_1$ by replacing $r_i, r_i^{-1}$ by $hr_i, h'r_i^{-1}$, respectively, for $1 \le i \le t$. Since $Hr_i = Hhr_i$ and $Hr_i^{-1} = Hhr_i^{-1}$, $R_{1} \cup \{e\}$ is a right transversal of $H$ in $G$. Moreover, $(Q_0\cup R_{1})^{-1}=Q_0\cup R_{1}$. Set $R_{2} = Q_0$. Then $R_{1}\cap R_{2} = \emptyset$, $(R_{1}\cup R_{2})^{-1} = R_{1}\cup R_{2}$, and both $R_{1}\cup\{e\}$ and $R_{2}\cup \{e\}$ are right transversals of $H$ in $G$. Therefore, by Lemma \ref{co:co2.1}, $H$ is a $(0,2)$-regular set in $\cay(G, R_{1} \cup R_{2})$ and hence a $(0,2)$-regular set of $G$.

Up to now we have proved the result in the case when $k = 2$. With this as the base case the rest of the proof proceeds by induction on $k$. Suppose inductively that for some $l$ with $1 \le l \le \lfloor |H|/2 \rfloor-1$
we have obtained $2l$ pairwise disjoint subsets $R_1, R_2, \ldots, R_{2l}$ of $G$ such that $R_i \cup \{e\}$ is a right transversal of $H$ in $G$ for $1\leq i\leq 2l$ and $\cup_{j=1}^{2l} R_j$ is inverse-closed. Write $R_1=\{r_{1,1}, r_{1,2},\ldots, r_{1,n-1}\}$. Since $2l < |H|$, for each $1 \le i \le n-1$, there exists $h_{i} \in H$ such that $h_{i}r_{1,i} \notin \cup_{j=1}^{2l} R_j$. Set $T=\{h_1r_{1,1}, h_{2}r_{1,2}, \ldots h_{n-1}r_{1,n-1}\}$.
Since $R_1\cup\{e\}$ is a right transversal of $H$ in $G$ and $Hh_{i}r_{1,i}=Hr_{1,i}$ for each $h_{i}r_{1,i} \in T$, $T\cup \{e\}$ is a right transversal of $H$ in $G$. Since $H$ is normal in $G$, $T^{-1}\cup \{e\}$ is also a right transversal of $H$ in $G$. Note that $(\cup_{i=1}^{2l}R_{i})^{-1}=\cup_{i=1}^{2l}R_{i}$ and $T\cap R_{i}=\emptyset$ for $1\leq i\leq 2l$. Hence $T^{-1}\cap R_{i}=\emptyset$ for $1\leq i\leq 2l$.

If $T\cap T^{-1}=\emptyset$, then by Lemma \ref{co:co2.1},  $H$ is a $(0, 2l+2)$-regular set of $\cay(G, (\cup_{i=1}^{2l}R_{i})\cup T\cup T^{-1})$ and hence a $(0, 2l+2)$-regular set of $G$.

In what follows we assume $T\cap T^{-1}\neq\emptyset$, say, $T\cap T^{-1} = \{t_1, t_2,\ldots, t_{m}\}$, where $1 \le m \le n-1$. If $|H|=2$, then $H$ is a $(0, 2)$-regular set of $G$ as shown in the case when $k=2$. Henceforth we assume further that $|H|\geq 3$. For all involutions $t_i \in T\cap T^{-1}$ such that $Ht_i \setminus (\cup_{j=1}^{2l}R_j)$ contains an involution, say, $ht_i$, replace $t_i$ by $ht_i$ in $T^{-1}$ to obtain a new subset $T_1$ of $G$. Since $H$ is normal in $G$ with $|H| \ge 3$, if $t_i \in T\cap T^{-1}$ is an involution such that $Ht_i \setminus (\cup_{j=1}^{2l}R_j)$ has no involutions, then there exist distinct elements $ht_i, h't_i$ of $Ht_i \setminus (\cup_{j=1}^{2l}R_j)$ such that $(ht_i)^{-1} = h't_i$. For all such involutions $t_i$, replace $t_i$ by $ht_i$ in $T$ to obtain a new subset $T_0$ and replace $t_i^{-1}$ by $h't_i$ in $T_1$ to obtain a new subset $T'_1$. Since $H(ht_i)=Ht_i=H(h't_i)$, both $T_0 \cup \{e\}$ and $T'_1 \cup \{e\}$ are transversals of $H$ in $G$. Moreover, $(T_0\cup T'_1)^{-1} = T_0\cup T'_1$.

If $T_0\cap T'_1=\emptyset$, then by Lemma \ref{co:co2.1}, $H$ is a $(0,2l+2)$-regular set in $\cay(G, (\cup_{i=1}^{2l}R_{i})\cup T_0\cup T'_1)$ and hence a $(0,2l+2)$-regular set of $G$.

Assume $T_0\cap T'_1\neq\emptyset$ in the sequel. Without loss of generality we may assume that $T_0\cap T'_1=\{t_{1}, t_{2}, \ldots, t_r, t^{-1}_{1}, t^{-1}_{2}, \ldots, t_r^{-1}\}\subset T\cap T^{-1}$, where $1\leq r\leq \frac{m}{2}$. Since both $T \cup \{e\}$ and $T^{-1} \cup \{e\}$ are right transversals of $H$ in $G$, we have $Ht_i^{-1}\neq Ht_i$ for $1 \le i \le r$. Since $H$ is normal in $G$ and $t_i\neq t_i^{-1}$, there exist elements $h, h'$ of $H$ such that $ht_i \neq (ht_i)^{-1} = h't_{i}^{-1}$. Let  $T''$ be the subset of $G$ obtained from $T'_1$ by replacing $t_i, t_i^{-1}$ by $ht_i, h't_i^{-1}$, respectively, for $1 \le i \le r$. Since $Ht_i=Hht_i$ and $Ht_i^{-1}=Hht_i^{-1}$, $T'' \cup \{e\}$ is a right transversal of $H$ in $G$. Also, we have $(T_0\cup T'')^{-1} = T_0 \cup T''$. Note that $T_0 \cap T'' = \emptyset$ and both $T_0 \cup\{e\}$ and $T'' \cup \{e\}$ are right transversals of $H$ in $G$. Therefore, by Lemma \ref{co:co2.1}, $H$ is a $(0,2l+2)$-regular set in $\cay(G, (\cup_{i=1}^{2l}R_{i})\cup T_0 \cup T'')$ and hence a $(0,2l+2)$-regular set of $G$. This completes the proof.
\end{proof}

The following result was proved by Huang, Xia and Zhou in~\cite{Huang18}.

\begin{lem}~\cite[Theorem 2.2]{Huang18}
\label{le:lehuang}
Let $G$ be a group and $H$ a proper normal subgroup of $G$. Then $H$ is a $(0,1)$-regular set of $G$ if and only if for any $g \in G$ with $g^2 \in H$ there exists an element $h \in H$ such that $(gh)^2=e$.
\end{lem}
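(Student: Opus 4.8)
The plan is to reduce everything to the $k=1$ case of Lemma~\ref{co:co2.1}. Specializing conditions \eqref{it:ita}, \eqref{it:itd} and \eqref{it:ite} of that lemma to $k=1$, the set $H$ is a $(0,1)$-regular set of $\cay(G,S)$ if and only if $S\cup\{e\}$ is a left transversal of $H$ in $G$ (equivalently, since $H$ is normal, a right transversal). Hence $H$ is a $(0,1)$-regular set of $G$ precisely when there exists an inverse-closed subset $S$ of $G\setminus\{e\}$ such that $S\cup\{e\}$ is a transversal of $H$ in $G$. So the whole argument should come down to deciding when such an \emph{inverse-closed} transversal can be chosen, and I expect the involution condition on elements $g$ with $g^2\in H$ to be exactly the obstruction.

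For the necessity I would start from such an $S$, take $g\in G$ with $g^2\in H$, and observe that $g^{-1}=g\cdot(g^2)^{-1}\in gH$, so that $gH=g^{-1}H$. Writing $g=sh_0$, where $s\in S\cup\{e\}$ is the unique representative of $gH$ and $h_0\in H$, I would use that $S$ is inverse-closed together with normality of $H$ to conclude that $s^{-1}\in S\cup\{e\}$ represents the coset $s^{-1}H=g^{-1}H=gH=sH$. Uniqueness of representatives in a transversal then forces $s=s^{-1}$, i.e.\ $s^2=e$, and taking $h=h_0^{-1}\in H$ gives $gh=s$ and $(gh)^2=s^2=e$, which is what is required.

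For the sufficiency I would construct an inverse-closed transversal directly. Since $H$ is normal, inversion sends each left coset $aH$ to the left coset $a^{-1}H$, so it permutes the non-trivial cosets of $H$, and its orbits have size $1$ or $2$. For each orbit $\{aH,a^{-1}H\}$ of size $2$ I would pick any $s\in aH$ and place both $s$ and $s^{-1}$ into $S$. An orbit of size $1$ is a coset $aH\neq H$ with $aH=a^{-1}H$, which is equivalent to $a^2\in H$; here the hypothesis supplies $h\in H$ with $(ah)^2=e$, and since $aH\neq H$ forces $ah\neq e$, the element $s:=ah$ is an involution lying in $aH$, which I would place into $S$ (noting $s^{-1}=s$). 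The resulting $S$ is then an inverse-closed subset of $G\setminus\{e\}$ with $S\cup\{e\}$ a transversal of $H$, and Lemma~\ref{co:co2.1} would finish the argument.

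I expect the sufficiency direction to be the main obstacle, essentially in the bookkeeping of the transversal: one must verify that the only cosets which cannot be freely paired under inversion are exactly the self-inverse ones satisfying $a^2\in H$, and that it is precisely for these that the hypothesis is needed to furnish an involution representative. It will also be necessary to confirm that $e\notin S$ and that each coset of $H$ receives exactly one representative, which is routine but should be recorded. Normality of $H$ is what guarantees that inversion acts on the set of \emph{left} cosets, and this is the point at which the normality assumption genuinely enters the construction.
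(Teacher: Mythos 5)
The paper does not prove this lemma itself: it is imported verbatim as \cite[Theorem 2.2]{Huang18}, so there is no in-paper proof to compare against. Your argument is nevertheless correct and complete. The reduction via Lemma~\ref{co:co2.1} with $k=1$ to the existence of an inverse-closed set $S\subseteq G\setminus\{e\}$ with $S\cup\{e\}$ a (left) transversal of $H$ is exactly the right reformulation; the necessity step (using $g^{2}\in H$ to get $gH=g^{-1}H$, then uniqueness of transversal representatives to force $s=s^{-1}$) and the sufficiency step (pairing the non-trivial cosets under inversion, which acts on left cosets because $H$ is normal, and using the hypothesis only on the self-inverse cosets $aH=a^{-1}H$, i.e.\ $a^{2}\in H$, to extract an involution representative $ah\neq e$) are both sound, and this is essentially the argument given in the cited source. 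The small bookkeeping points you flag do all check out: in a size-two orbit $\{aH,a^{-1}H\}$ the chosen $s$ cannot be an involution (else the two cosets would coincide), so $s$ and $s^{-1}$ land one in each coset, and no coset receives two representatives.
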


This result was generalized as follows by Wang, Xia and Zhou in \cite{Wang20}.

\begin{lem}~\cite[Theorem 1.2]{Wang20}
\label{le:lewang20}
Let $G$ be a group and $H$ a non-trivial proper normal subgroup of $G$. Then $H$ is a $(0,1)$-regular set of $G$ if and only if $H$ is an $(a, b)$-regular set of $G$ for every pair of integers $a, b$ with $0\leq a \leq |H|- 1$ and $0 \leq b \leq |H|$ such that $\gcd(2, |H|-1)$ divides $a$.
\end{lem}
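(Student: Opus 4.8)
The plan is to prove the two implications separately, with essentially all the content lying in the forward direction. The converse is immediate: since $H$ is non-trivial we have $0\le 0\le |H|-1$ and $0\le 1\le |H|$, and $\gcd(2,|H|-1)$ divides $0$, so the pair $(a,b)=(0,1)$ is among those considered; hence the hypothesis forces $H$ to be a $(0,1)$-regular set of $G$.

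For the forward direction I would first reduce everything to a coset-counting statement. Since $H$ is normal in $G$, for an inverse-closed $S\subseteq G\setminus\{e\}$ I write $S=S_0\cup S'$ with $S_0=S\cap H$ and $S'=S\setminus H$. Using that the neighbours of a vertex $x$ in $\cay(G,S)$ are exactly $\{sx:s\in S\}$, together with normality of $H$, a short computation shows that a vertex $v\in H$ has exactly $|S_0|$ neighbours in $H$, while a vertex $u\in G\setminus H$ has exactly $|S'\cap Hu^{-1}|$ neighbours in $H$. Consequently $H$ is an $(a,b)$-regular set in $\cay(G,S)$ if and only if $S_0$ is an inverse-closed subset of $H\setminus\{e\}$ of size $a$ and $S'$ is an inverse-closed subset of $G\setminus H$ meeting every non-trivial coset of $H$ in exactly $b$ elements; by Lemma~\ref{co:co2.1} the latter condition is precisely that $H$ be a $(0,b)$-regular set in $\cay(G,S')$. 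Thus the task splits into realizing the internal parameter $a$ inside $H$ and the external parameter $b$ across the non-trivial cosets, independently.

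The combinatorial core is a single elementary fact about the inversion involution $x\mapsto x^{-1}$: for a finite inverse-closed set $X$ of size $N$ with $f$ self-inverse elements, an inverse-closed subset of $X$ of size $m$ exists if and only if $m\le N$ and either $f\ge 1$ or $m$ is even (an inverse-closed subset has size equal to the number of involutions chosen plus twice the number of inverse-pairs chosen). Applying this to $X=H\setminus\{e\}$, where $N=|H|-1$ and $f$ is the number of involutions of $H$ (so $f\ge 1$ exactly when $|H|$ is even, i.e.\ exactly when $\gcd(2,|H|-1)=1$), shows that an inverse-closed $S_0\subseteq H\setminus\{e\}$ of size $a$ exists precisely when $\gcd(2,|H|-1)\mid a$ and $a\le |H|-1$. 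For the external part I would pair each non-self-paired coset $C$ (where $C\neq C^{-1}$) with $C^{-1}$, choose any $b$ elements of $C$, and put their inverses into $C^{-1}$; for each self-paired coset $C=C^{-1}$ (equivalently $c^2\in H$ for $c\in C$) I would apply the combinatorial fact to $C$ itself, which has size $|H|$. This is exactly where the hypothesis is used: by Lemma~\ref{le:lehuang}, $H$ being $(0,1)$-regular yields $h\in H$ with $(ch)^2=e$, so every self-paired non-trivial coset contains an involution, giving $f\ge1$ there and hence making every value $b\in[0,|H|]$ attainable. Combining the resulting $S_0$ and $S'$ into $S=S_0\cup S'$ (which is inverse-closed and avoids $e$, since $S_0\subseteq H\setminus\{e\}$ and $S'\subseteq G\setminus H$ are separately inverse-closed and disjoint) and invoking the characterization produces the desired $(a,b)$-regular set.

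I expect the main obstacle to be precisely the odd-$b$ case of the external construction. For even $b$ the set $S'$ exists with no assumption on $H$ whatsoever, consistent with the unconditional Theorem~\ref{th:th5.3}; the $(0,1)$-regularity hypothesis does real work only in forcing each self-paired coset to contain an involution, which is exactly what unlocks odd intersection sizes. The remaining verifications—the two neighbour counts for $v\in H$ and $u\in G\setminus H$, and the parity bookkeeping in the combinatorial fact—are routine, so the proof reduces to carefully organising the coset pairing and applying Lemma~\ref{le:lehuang} at the one decisive step.
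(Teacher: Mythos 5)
The paper offers no proof of this lemma: it is imported verbatim as \cite[Theorem 1.2]{Wang20} and used as a black box, so there is no internal proof to compare your argument against. Assessed on its own, your proof is correct. The converse is indeed immediate. For the forward direction the decoupling is right: writing $S=S_0\cup S'$ with $S_0=S\cap H$, a vertex of $H$ has exactly $|S_0|$ neighbours in $H$ and a vertex $u\notin H$ has exactly $|S'\cap Hu^{-1}|$ neighbours in $H$, and normality is what makes $(Hg)^{-1}=Hg^{-1}$ again a right coset, so your pairing of non-trivial cosets with their inverse cosets is well defined. Your parity lemma on inverse-closed subsets is correct because the non-self-inverse elements of an inverse-closed set pair off, so $|X|-f$ is even and the realizable sizes $j+2i$ with $j\le f$ are exactly the integers $m\le|X|$ with $m$ even or $f\ge 1$; combined with Cauchy's theorem this is precisely the condition $\gcd(2,|H|-1)\mid a$ for the internal part $S_0$. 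The one step where the $(0,1)$-hypothesis does real work --- producing an involution in each coset $Hg$ with $g\notin H$ and $g^2\in H$, so that odd values of $b$ become attainable on self-paired cosets --- is exactly Lemma~\ref{le:lehuang}, and your use of it is legitimate: $gh\in Hg$ by normality and $gh\neq e$ since $g\notin H$. Two cosmetic points only: the detour through Lemma~\ref{co:co2.1} is unnecessary (your direct coset count already characterizes $S'$, and that lemma is stated only for $k\ge 1$ whereas your range allows $b=0$), and for $(a,b)=(0,0)$ the construction yields $S=\emptyset$, which is harmless but worth a word. Neither affects correctness.
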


Using Lemmas \ref{le:lehuang} and \ref{le:lewang20}, we obtain the following result.

\begin{thm}
\label{th:th5.4}
Let $G$ be a group and $H$ a non-trivial proper normal subgroup of $G$. Then for any odd integer $k$ between $1$ and $|H|$, $H$ is a $(0, k)$-regular set of $G$ if and only if for any $g \in G$ with $g^2 \in H$ there exists an element $h \in H$ such that $(gh)^2=e$.
\end{thm}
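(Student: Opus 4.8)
The plan is to reduce the statement to the $(0,1)$ characterization of Lemma~\ref{le:lehuang} and then run a parity argument tailored to odd $k$. By Lemma~\ref{le:lehuang}, the stated condition --- that every $g \in G$ with $g^2 \in H$ admits $h \in H$ with $(gh)^2 = e$ --- is equivalent to $H$ being a $(0,1)$-regular set of $G$. So it suffices to show that, for odd $k$ with $1 \le k \le |H|$, $H$ is a $(0,k)$-regular set of $G$ if and only if $H$ is a $(0,1)$-regular set of $G$.

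For sufficiency of the condition I would proceed as follows. Assuming $H$ is a $(0,1)$-regular set of $G$, Lemma~\ref{le:lewang20} yields that $H$ is an $(a,b)$-regular set of $G$ for every admissible pair $(a,b)$. Choosing $a = 0$ and $b = k$ is admissible, since $0 \le 0 \le |H| - 1$, $0 \le k \le |H|$, and $\gcd(2, |H|-1)$ trivially divides $0$; this immediately gives that $H$ is a $(0,k)$-regular set of $G$. This implication holds for every $k$ with $1 \le k \le |H|$ irrespective of parity, so oddness of $k$ plays no role here.

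The substantive half is the necessity of the condition, and this is where oddness enters. I would start by assuming $H$ is a $(0,k)$-regular set in some $\cay(G,S)$ and fixing $g \in G$ with $g^2 \in H$; the case $g \in H$ is immediate on taking $h = g^{-1}$, so I assume $g \notin H$. Lemma~\ref{co:co2.1} supplies a partition $\{L_1, \ldots, L_k\}$ of $S$ in which each $L_i \cup \{e\}$ is a left transversal of $H$ in $G$. Since $gH \ne H$, each $L_i$ contains exactly one representative of the coset $gH$, so $|S \cap gH| = k$. Because $g^2 \in H$ and $H$ is normal, $(gH)^{-1} = g^{-1}H = gH$, and as $S$ is inverse-closed it follows that $S \cap gH$ is inverse-closed. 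Finally, since $e \notin gH$, the non-involutions of $S \cap gH$ cancel in pairs $\{x, x^{-1}\}$ with $x \ne x^{-1}$, so the number of involutions in $S \cap gH$ has the same parity as $|S \cap gH| = k$. As $k$ is odd this number is odd, hence positive; picking such an involution $gh$ with $h \in H$ gives $(gh)^2 = e$, which is exactly the required condition.

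I expect the crux to be the necessity direction, specifically the twin observations that $S \cap gH$ is inverse-closed (this is precisely where $g^2 \in H$ and the normality of $H$ are used) and has size exactly $k$ (which is where the transversal partition from Lemma~\ref{co:co2.1} is used). Once these are in place the parity count finishes the argument and makes transparent why even and odd $k$ behave differently: for even $k$ Theorem~\ref{th:th5.3} gives an unconditional positive answer, whereas for odd $k$ an involution in the relevant coset is forced and genuinely needed.
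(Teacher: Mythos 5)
Your proposal is correct and follows essentially the same route as the paper: sufficiency via Lemmas~\ref{le:lehuang} and~\ref{le:lewang20} with the pair $(0,k)$, and necessity by using Lemma~\ref{co:co2.1} to see that $S\cap gH$ (the paper's set $T$) has size exactly $k$ and is inverse-closed, so that odd $k$ forces an involution $gh$ in that coset. Your explicit parity count of involutions versus paired non-involutions is just a slightly more spelled-out version of the paper's observation that an inverse-closed set of odd size must contain a self-inverse element.
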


\begin{proof}
The sufficiency follows Lemmas~\ref{le:lehuang} and~\ref{le:lewang20}. To prove the necessity, suppose that $k$ is an odd integer between $1$ and $|H|$ and $H$ is a $(0, k)$-regular set of $G$. Then by Lemma~\ref{co:co2.1} there exists an inverse-closed subset $S$ of $G$ which can be partitioned into $L_1, L_2, \ldots, L_{k}$ such that $L_i \cup\{e\}$ is a left transversal of $H$ in $G$ for $1\leq
i\leq k$. Note that $|S| = k |L_i| = k(n-1)$ for each $i$, where $n = |G:H|$. So we can write $S = \{s_{i,j}: 1 \le i \le k, 1 \le j \le n-1\}$ such that $L_i = \{s_{i,1}, s_{i,2}, \ldots, s_{i,n-1}\}$ for $1 \le i \le k$ and $s_{1,j}H = s_{2,j}H = \cdots = s_{k,j}H$ for $1 \le j \le n-1$. Consider any $g \in G \setminus H$ with $g^2 \in H$. For each $1 \leq i \leq k$, there exists an element $s_{i, j_i} \in L_i$ such that $g \in s_{i, j_i}H$, where $j_i$ depends on $i$. Since $g^2 \in H$ and $H$ is normal in $G$, we have $s_{i,j_i}H = gH = g^{-1}H = Hg^{-1} = s_{i,j_i}^{-1}H = Hs_{i,j_i}^{-1}$. Set $T = \{s_{i, j_i}\in S: gH=s_{i,j_i}H\}$. Then $T^{-1} = T$. Since $|T|=k$ and $k$ is odd, it follows that there exists $1 \le i \le k$ such that $s_{i, j_i}^{-1} = s_{i, j_i}$. Since $g \in s_{i, j_i}H$, we have $s_{i, j_i} = gh$ for some $h \in H$. Hence $(gh)^2 = s_{i, j_i}^{2} = e$. So we have proved that for any $g \in G \setminus H$ with $g^2 \in H$ there exists an element $h \in H$ such that $(gh)^2=e$. Obviously, the same statement holds for any $g \in H$ as well. This completes the proof.
\end{proof}

Obviously, Theorem~\ref{th:th5.4} is a generalization of Lemma~\ref{le:lehuang}, but on the other hand its proof relies on Lemma~\ref{le:lehuang}. Note that the ``only if'' part in Theorem~\ref{th:th5.4} is not implied in Lemmas ~\ref{le:lehuang} and \ref{le:lewang20} as our $k$ is fixed. Theorem~\ref{th:th5.4} implies the following corollary.

\begin{cor}
\label{co:co5.0}
Let $G$ be a group and $H$ a non-trivial proper normal subgroup of $G$. Then for any odd integers $k, l$ between $1$ and $|H|$, $H$ is a $(0, k)$-regular set of $G$ if and only if it is a $(0, l)$-regular set of $G$. In particular, for any odd integer $k$ between $1$ and $|H|$, $H$ is a $(0, 1)$-regular set of $G$ if and only if it is a $(0, k)$-regular set of $G$.
\end{cor}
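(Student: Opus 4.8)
The plan is to derive this directly from Theorem~\ref{th:th5.4}. The crucial observation is that the characterizing condition appearing in that theorem, namely

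\begin{equation}
\text{for any } g \in G \text{ with } g^2 \in H \text{ there exists } h \in H \text{ such that } (gh)^2 = e, \tag{$\ast$}
\end{equation}

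is a property of the pair $(G, H)$ alone and does not involve the parameter $k$. Thus Theorem~\ref{th:th5.4} should be read as asserting that, for \emph{every} odd integer $k$ between $1$ and $|H|$, the statement ``$H$ is a $(0,k)$-regular set of $G$'' is equivalent to the single fixed condition $(\ast)$.

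With this reading the corollary is immediate. First I would fix two odd integers $k, l$ between $1$ and $|H|$. Applying Theorem~\ref{th:th5.4} with the parameter $k$ gives that $H$ is a $(0,k)$-regular set of $G$ if and only if $(\ast)$ holds; applying it again with the parameter $l$ gives that $H$ is a $(0,l)$-regular set of $G$ if and only if $(\ast)$ holds. Chaining these two equivalences through the common middle condition $(\ast)$ yields that $H$ is a $(0,k)$-regular set of $G$ if and only if it is a $(0,l)$-regular set of $G$, which is the first assertion.

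For the ``in particular'' clause I would simply specialize $l = 1$. Here the only point to check is that $l = 1$ is an admissible value, i.e.\ that $1$ lies between $1$ and $|H|$; this holds because $H$ is non-trivial, so $|H| \ge 2$, and of course $1$ is odd, so Theorem~\ref{th:th5.4} applies with this choice. Substituting $l = 1$ into the equivalence already established gives that $H$ is a $(0,1)$-regular set of $G$ if and only if it is a $(0,k)$-regular set of $G$ for every odd $k$ in the stated range. There is no real obstacle in this argument: the entire content lies in Theorem~\ref{th:th5.4}, and the corollary is only a matter of noting that the theorem's criterion is $k$-free and then transitively linking the two instances. The one thing to be careful about is to state the range restriction on $k$ and $l$ exactly as in Theorem~\ref{th:th5.4}, so that both applications are legitimate.
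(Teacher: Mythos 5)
Your proposal is correct and is exactly the argument the paper intends: the corollary is stated as an immediate consequence of Theorem~\ref{th:th5.4}, whose characterizing condition is independent of $k$, so chaining the two instances (and specializing $l=1$, valid since $|H|\ge 2$) gives the result. Nothing further is needed.
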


Note that the ``only if" part of the second statement in this corollary is implied in the ``only if" part of Lemma \ref{le:lewang20}, but the ``if" part of this statement is stronger than the ``if" part of Lemma \ref{le:lewang20} as $k$ is fixed.

Corollary \ref{co:co5.0} and part (a) of Corollaries 2.3 and 2.4 in \cite{Huang18} together imply the following result.

\begin{cor}
\label{co:co5.2}
Let $G$ be a group and $H$ a non-trivial proper normal subgroup of $G$. If either $|H|$ or $|G:H|$ is odd, then for any odd integer $k$ between $1$ and $|H|$, $H$ is a $(0, k)$-regular set of $G$. In particular, if $G$ is of odd order, then any non-trivial normal subgroup $H$ of $G$ is a $(0, k)$-regular set of $G$ for any odd integer $k$ between $1$ and $|H|$.
\end{cor}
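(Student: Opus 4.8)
The plan is to reduce the whole statement to the already-classified case of perfect codes. By Corollary~\ref{co:co5.0}, for any odd integer $k$ between $1$ and $|H|$ the subgroup $H$ is a $(0,k)$-regular set of $G$ if and only if it is a $(0,1)$-regular set of $G$. Hence it suffices to prove that $H$ is a $(0,1)$-regular set of $G$ whenever $|H|$ or $|G:H|$ is odd; the various admissible values of $k$ then come for free. Since $H$ is normal, Lemma~\ref{le:lehuang} tells us that this reduces to verifying a single arithmetic condition: for every $g\in G$ with $g^2\in H$ there exists $h\in H$ with $(gh)^2=e$.

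First I would dispose of the case $|G:H|$ odd, which is immediate. If $g^2\in H$, then the image $\bar g$ of $g$ in $G/H$ satisfies $\bar g^{\,2}=\bar e$, so the order of $\bar g$ divides $\gcd(2,|G:H|)=1$; thus $g\in H$. Taking $h=g^{-1}\in H$ gives $(gh)^2=e$, so the condition of Lemma~\ref{le:lehuang} holds and $H$ is a $(0,1)$-regular set of $G$.

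Next I would treat the case $|H|$ odd by a parity argument on one coset. Fix $g\in G$ with $g^2\in H$ (the subcase $g\in H$ being handled as above, by $h=g^{-1}$), and consider the coset $gH$, which has odd size $|H|$. Normality of $H$ shows that inversion $x\mapsto x^{-1}$ maps $gH$ into itself: if $x=gh$ then $x^{-1}=h^{-1}g^{-1}=g\,(g^{-1}h^{-1}g^{-1})$, and $g^{-1}h^{-1}g^{-1}=(g^{-1}h^{-1}g)\,g^{-2}\in H$ because $g^{-1}h^{-1}g\in H$ and $g^{-2}\in H$, whence $x^{-1}\in gH$. Thus inversion is an involution of the odd-sized set $gH$, and since $e\notin gH$ its only possible fixed points are genuine involutions of $G$. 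A fixed-point-free involution would pair up all elements and force $|gH|$ to be even; as $|gH|=|H|$ is odd there must be a fixed point, i.e. an element $gh\in gH$ with $(gh)^2=e$. This verifies the condition of Lemma~\ref{le:lehuang}, so $H$ is again a $(0,1)$-regular set of $G$. (Equivalently one may simply quote part~(a) of Corollaries~2.3 and~2.4 of~\cite{Huang18}, which record exactly these two perfect-code facts.)

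Finally, the ``in particular'' clause follows at once: if $|G|$ is odd then $|H|$ divides $|G|$ and so is odd, and the first case applies to any proper non-trivial normal subgroup $H$. I do not expect a serious obstacle, as the corollary is essentially a bookkeeping combination of Corollary~\ref{co:co5.0} with the perfect-code criterion; the only mildly delicate point is checking that inversion stabilizes the coset $gH$ in the $|H|$-odd case, which is precisely where normality of $H$ enters.
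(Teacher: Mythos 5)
Your proof is correct and follows essentially the same route as the paper, which simply combines Corollary~\ref{co:co5.0} with part~(a) of Corollaries~2.3 and~2.4 of~\cite{Huang18} (the two perfect-code facts you reprove). Your explicit verifications of the Lemma~\ref{le:lehuang} criterion in the two cases --- the quotient-order argument for $|G:H|$ odd and the inversion fixed-point argument on the coset $gH$ for $|H|$ odd --- are sound, but they only make self-contained what the paper handles by citation.
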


The following lemma consists of three results from \cite{Huang18}. We will use this lemma to prove our final result in this section.

\begin{lem}
\label{le:lehuang2}
Let $G$ be an abelian group with Sylow $2$-subgroup $P=\langle a_1\rangle\times\cdots\times\langle a_n\rangle,$ where $a_i$ has order $2^{m_i}>1$ for $1\leq i\leq n$. Let $H$ be a proper subgroup of $G$. Then the following statements hold:
\begin{enumerate}[\rm (a)]
\item\label{it:it2.6}
 $H$ is a $(0, 1)$-regular set of $G$ if and only if $H\cap P$ is a $(0,1)$-regular set of $P$ (\cite[Lemma 2.5]{Huang18});
\item\label{it:it2.7}
 if $H$ is a $(0, 1)$-regular set of $G$, then either $H\cap P$ is trivial or $H\cap P$ projects onto at least one of $\langle a_1\rangle,\ldots, \langle a_n\rangle$ (\cite[Lemma 2.6(a)]{Huang18});
\item\label{it:it2.8} if $H\cap P$ is cyclic, then $H$ is a $(0, 1)$-regular set of $G$ if and only if either $H\cap P$ is trivial or $H\cap P$ projects onto at least one of $\langle a_1\rangle,\ldots, \langle a_n\rangle$ (\cite[Theorem 2.7(a)]{Huang18}).
\end{enumerate}
\end{lem}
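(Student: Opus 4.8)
The three parts are quoted from \cite{Huang18}, so one may strictly just cite them; nevertheless, here is how I would reconstruct the arguments, all of which rest on the squaring reformulation of Lemma~\ref{le:lehuang}. Since $G$ is abelian every subgroup is normal, and for $h\in H$ one has $(gh)^2=g^2h^2$. Writing $H^{\ast}=\{h^2:h\in H\}$, which is a subgroup of $H$ because $G$ is abelian, the condition ``there exists $h\in H$ with $(gh)^2=e$'' becomes ``$g^{-2}\in H^{\ast}$'', equivalently ``$g^2\in H^{\ast}$''. Hence by Lemma~\ref{le:lehuang}, $H$ is a $(0,1)$-regular set of $G$ if and only if $g^2\in H$ implies $g^2\in H^{\ast}$ for all $g\in G$; since the set of such $g^2$ is exactly $H\cap G^{\ast}$, where $G^{\ast}=\{g^2:g\in G\}$, and $H^{\ast}\subseteq H\cap G^{\ast}$ always holds, this is equivalent to the single identity $H\cap G^{\ast}=H^{\ast}$. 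The whole proof then amounts to analysing this identity.

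For part (a) I would use the internal direct decomposition $G=P\times O$, where $O$ is the unique Hall $2'$-subgroup of the abelian group $G$. Then $H=(H\cap P)\times(H\cap O)$, and since squaring is a bijection on the odd-order group $O$ we have $G^{\ast}=P^{\ast}\times O$ and $H^{\ast}=(H\cap P)^{\ast}\times(H\cap O)$. Substituting these into $H\cap G^{\ast}=H^{\ast}$, the $O$-components agree automatically, so the identity holds in $G$ if and only if $(H\cap P)\cap P^{\ast}=(H\cap P)^{\ast}$ holds in $P$; by the same reformulation applied to $P$, this is precisely the statement that $H\cap P$ is a $(0,1)$-regular set of $P$, giving (a).

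For part (b) I would argue by contradiction. Suppose $H\cap P$ is non-trivial but its projection to $\langle a_i\rangle$ is proper for every $i$. Each such proper subgroup of the cyclic $2$-group $\langle a_i\rangle$ lies in its unique maximal subgroup $\langle a_i^2\rangle$, so $H\cap P\subseteq\langle a_1^2\rangle\times\cdots\times\langle a_n^2\rangle=P^{\ast}$ and therefore $(H\cap P)\cap P^{\ast}=H\cap P$. If $H$ were a $(0,1)$-regular set, part (a) would force $H\cap P=(H\cap P)^{\ast}$; but squaring strictly shrinks any non-trivial finite $2$-group, so $|(H\cap P)^{\ast}|<|H\cap P|$, a contradiction. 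Part~(c) then needs only the converse, since its forward direction is (b): assuming $H\cap P=\langle c\rangle$ is cyclic and projects onto some $\langle a_i\rangle$, the $i$-th coordinate of $c$ generates $\langle a_i\rangle$, so in $c=(a_1^{e_1},\dots,a_n^{e_n})$ the exponent $e_i$ is odd; then $c^{\,j}\in P^{\ast}$ forces $je_i$ to be even, hence $j$ even, whence $(H\cap P)\cap P^{\ast}=\langle c^2\rangle=(H\cap P)^{\ast}$, and (a) yields that $H$ is a $(0,1)$-regular set (the trivial case $H\cap P=\{e\}$ being immediate).

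The routine points are the direct-product bookkeeping and the bijectivity of squaring on $O$; the one genuinely load-bearing step is the equivalence ``$(0,1)$-regular set $\Leftrightarrow H\cap G^{\ast}=H^{\ast}$'', which converts Lemma~\ref{le:lehuang} into a purely arithmetic condition on squares. The place where I expect the real subtlety, and where the hypothesis is actually used, is the converse in (c): cyclicity is exactly what guarantees that surjecting onto a single factor $\langle a_i\rangle$ produces an odd exponent $e_i$ and hence pins down $(H\cap P)\cap P^{\ast}$ to $\langle c^2\rangle$; for non-cyclic $H\cap P$ this implication can fail, so explaining why cyclicity cannot be dropped is the main conceptual obstacle.
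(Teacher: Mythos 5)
The paper offers no proof of this lemma at all: it is a verbatim citation of Lemma 2.5, Lemma 2.6(a) and Theorem 2.7(a) of \cite{Huang18}, so there is nothing inside the paper to compare your argument against. Your reconstruction is correct and is essentially the argument one finds in \cite{Huang18}. The load-bearing step --- that for a proper subgroup $H$ of an abelian group $G$, the criterion of Lemma~\ref{le:lehuang} is equivalent to $H\cap G^{\ast}=H^{\ast}$, where $X^{\ast}=\{x^2:x\in X\}$ --- is right (one uses $(h^2)^{-1}=(h^{-1})^2$ so that $g^{-2}\in H^{\ast}$ iff $g^2\in H^{\ast}$), and the three deductions from it are all sound: the coprime direct-product bookkeeping $H=(H\cap P)\times(H\cap O)$ and $G^{\ast}=P^{\ast}\times O$ for (a); the identification $P^{\ast}=\langle a_1^2\rangle\times\cdots\times\langle a_n^2\rangle$ together with $|K^{\ast}|<|K|$ for any non-trivial finite $2$-group $K$ for (b); and the odd-exponent computation showing $\langle c\rangle\cap P^{\ast}=\langle c^2\rangle$ for (c), where you correctly isolate cyclicity as the hypothesis that cannot be dropped. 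One small caveat: in the final step of (a) you convert $(H\cap P)\cap P^{\ast}=(H\cap P)^{\ast}$ back into ``$H\cap P$ is a $(0,1)$-regular set of $P$'' by applying Lemma~\ref{le:lehuang} to $P$, which requires $H\cap P$ to be a \emph{proper} subgroup of $P$; when $P\le H$ your criterion correctly says $H$ is a perfect code of $G$, yet $P$ itself is not a $(0,1)$-regular set of $P$ under this paper's definition (regular sets are nonempty proper subsets). That mismatch is inherited from the conventions of \cite{Huang18} rather than introduced by you, but it is worth flagging if (a) is to be read with the present paper's definitions.
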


The three parts of the following result generalize \cite[Lemma 2.5]{Huang18}, \cite[Theorem 2.7(a)]{Huang18} and \cite[Corollary 2.8(a)]{Huang18}, respectively.

\begin{thm}
\label{le:le5.1}
Let $G$ be an abelian group with Sylow $2$-subgroup $P=\langle a_1\rangle\times\cdots\times\langle a_n\rangle$, where $a_i$ has order $2^{m_i}>1$ for $1\leq i\leq n$, and let $H$ be a proper subgroup of $G$. Then the following hold:
\begin{enumerate}[\rm (a)]
\item for any odd integer $k$ between $1$ and $|H|$, $H$ is a $(0, k)$-regular set of $G$ if and only if $H\cap P$ is a $(0, k)$-regular set of $P$;
\item if $H\cap P$ is cyclic, then for any odd integer $k$ between $1$ and $|H|$, $H$ is a $(0, k)$-regular set of $G$ if and only if either $H\cap P$ is trivial or $H\cap P$ projects onto at least one of $\langle a_1\rangle, \ldots, \langle a_n\rangle$;
\item if $G$ is a cyclic group, then for any odd integer $k$ between $1$ and $|H|$, $H$ is a $(0, k)$-regular set of $G$ if and only if either $|H|$ or $|G:H|$ is odd.
\end{enumerate}
\end{thm}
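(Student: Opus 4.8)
The plan is to reduce all three parts to the already-settled case $k=1$, using Corollary~\ref{co:co5.0} to pass freely between different odd values of $k$ and then invoking the results of Huang, Xia and Zhou collected in Lemma~\ref{le:lehuang2}. Since $G$ is abelian, every subgroup is normal, so Corollary~\ref{co:co5.0} applies both to $H$ inside $G$ and (when it is a non-trivial proper subgroup) to $H\cap P$ inside $P$. The strategy throughout is to show that each ``$(0,k)$-regular'' condition in the statement collapses to a single fixed statement about $(0,1)$-regularity, after which the equivalences are immediate.

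For part (a), I would argue as follows. By Corollary~\ref{co:co5.0}, for odd $k$ in the relevant range, $H$ is a $(0,k)$-regular set of $G$ if and only if $H$ is a $(0,1)$-regular set of $G$; by Lemma~\ref{le:lehuang2}(a) the latter is equivalent to $H\cap P$ being a $(0,1)$-regular set of $P$. On the other hand, applying Corollary~\ref{co:co5.0} inside $P$ shows that $H\cap P$ is a $(0,k)$-regular set of $P$ (for odd $k$ in its feasible range) if and only if $H\cap P$ is a $(0,1)$-regular set of $P$. Both sides therefore reduce to the single assertion that $H\cap P$ is a $(0,1)$-regular set of $P$, and chaining the two equivalences yields (a).

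Parts (b) and (c) then follow quickly. For (b) I would combine Corollary~\ref{co:co5.0} (so that $(0,k)$-regularity of $H$ in $G$ is equivalent to $(0,1)$-regularity) with Lemma~\ref{le:lehuang2}(c), which, under the hypothesis that $H\cap P$ is cyclic, characterises $(0,1)$-regularity by the condition that $H\cap P$ is trivial or projects onto at least one $\langle a_i\rangle$. For (c), a subgroup of a cyclic group is cyclic, so $H\cap P$ is automatically cyclic and part (b) applies with $n=1$ and $P=\langle a_1\rangle$. It then remains only to translate the two disjuncts: ``$H\cap P$ is trivial'' means $H$ has no $2$-torsion, i.e. $|H|$ is odd, while ``$H\cap P$ projects onto $\langle a_1\rangle$'' means $H\cap P=P$, i.e. $H$ contains the full Sylow $2$-subgroup, which is equivalent to $|G:H|$ being odd. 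This gives exactly the condition that $|H|$ or $|G:H|$ is odd.

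The main obstacle is the careful bookkeeping of the ranges of $k$ and of the boundary cases in part (a), rather than any substantial new group theory. Corollary~\ref{co:co5.0} requires the subgroup in question to be non-trivial and $k$ to lie between $1$ and its order, so one must treat the case $H\cap P=\{e\}$ separately, where $|H|$ is odd and one falls back on Corollary~\ref{co:co5.2}. One must also note that the feasible range of $k$ on the $P$-side is $1\le k\le|H\cap P|$: a $(0,k)$-regular set of size $|H\cap P|$ forces $k\le|H\cap P|$, since every vertex outside it has $k$ distinct neighbours inside it. Once ``$H\cap P$ is a $(0,k)$-regular set of $P$'' is read with $k$ in this feasible range and both conditions are collapsed to $(0,1)$-regularity of $H\cap P$ in $P$, the equivalences go through cleanly; keeping these ranges and the trivial intersection aligned is the only delicate point, with all the structural content supplied by Lemma~\ref{le:lehuang2} and Corollary~\ref{co:co5.0}.
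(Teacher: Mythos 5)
Your proposal is correct and follows essentially the same route as the paper: both reduce each $(0,k)$-regularity condition to $(0,1)$-regularity via Corollary~\ref{co:co5.0} (using that all subgroups of an abelian group are normal) and then invoke Lemma~\ref{le:lehuang2} for the $k=1$ characterisations, with (c) obtained from (b) by translating the two disjuncts. Your extra attention to the feasible range of $k$ on the $P$-side and to the case $H\cap P=\{e\}$ is a point the paper's proof passes over silently, so it is a welcome refinement rather than a divergence.
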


\begin{proof}
(a) In view of Corollary~\ref{co:co5.0}, for any odd integer $k$ between $1$ and $|H|$, $H$ is a $(0, k)$-regular set of $G$ if and only if it is a $(0,1)$-regular set of $G$. By part~(\ref{it:it2.6}) of Lemma~\ref{le:lehuang2}, $H$ is a $(0,1)$-regular set of $G$ if and only if $H\cap P$ is a $(0,1)$-regular set of $P$. By Corollary~\ref{co:co5.0} again, $H\cap P$ is a $(0,1)$-regular set of $P$ if and only if it is a $(0,k)$-regular set of $P$. Combining these statements, we obtain (a) immediately.

(b) By part (a), it suffices to prove (b) in the case when $G=P=\langle a_1\rangle\times\cdots\times\langle a_n\rangle$ and $H \cap P = H$ is cyclic. In this case, by Corollary \ref{co:co5.0}, $H$ is a $(0, k)$-regular set of $G$ for any odd integer $k$ between $1$ and $|H|$ if and only if it is a $(0, 1)$-regular set of $G$, which, by part (c) of Lemma~\ref{le:lehuang2}, is true if and only if either $H\cap P$ is trivial or $H$ projects onto at least one of $\langle a_1\rangle, \ldots, \langle a_n\rangle$.

(c) In the special case when $G$ is a cyclic group, a subgroup $H$ of $G$ is a $(0, k)$-regular set of $G$ for any odd integer $k$ between $1$ and $|H|$ if and only if either $H\cap P$ is trivial or $H \cap P = P$. However, $H\cap P$ is trivial if and only if $|H|$ is odd, whilst $H \cap P = P$ if and only if $|G:H|$ is odd. Therefore, $H$ is a $(0, k)$-regular set of $G$ for any odd integer $k$ between $1$ and $|H|$ if and only if either $|H|$ or $|G:H|$ is odd.
\end{proof}

\section{Subgroup $(0,k)$-regular sets of dihedral groups and generalized quaternion groups}
\label{sec:dih-qua}

Recall that the dihedral group $\dn$ of order $2n$ is defined as
\begin{equation}
\label{eq:D2n}
\dn=\langle a,b\ |\ a^n=b^2=e, bab=a^{-1}\rangle.
\end{equation}
It is known that the subgroups of $\dn$ are the cyclic groups $\langle a^t\rangle$ with $t$ dividing $n$ and the dihedral groups $\langle a^t, a^sb\rangle$ with $t$ dividing $n$ and $0\leq s\leq n-1$. All subgroup $(0,1)$-regular sets of $\dn$ were determined in \cite[Theorem 2.11]{Huang18}. Part (b) of the following theorem generalizes this result to subgroup $(0,k)$-regular sets of $\dn$ for odd integers $k$.

\begin{thm}
\label{th:th6.2}
Let $\dn$ be the dihedral group of order $2n \ge 6$ as given in \eqref{eq:D2n}, and let $H$ be a proper subgroup of $\dn$. Then the following statements hold for any integer $k$ between $1$ and $|H|$:
\begin{enumerate}[\rm (a)]
\item\label{it:it6.21} if $k$ is even, then $H$ is a $(0, k)$-regular set of $\dn$;
 \item\label{it:it6.22} if $k$ is odd, then $H$ is a $(0, k)$-regular set of $\dn$ if and only if $H\nleq \langle a \rangle$, or $H \leq \langle a \rangle$ with at least one of $|H|$ and $\frac{n}{|H|}$ odd.
 \end{enumerate}
\end{thm}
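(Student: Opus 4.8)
The plan is to split the argument according to the two families of subgroups of $\dn$: the cyclic subgroups $\langle a^t\rangle$ with $t\mid n$, which lie inside $\langle a\rangle$ and are normal in $\dn$, and the dihedral subgroups $\langle a^t, a^sb\rangle$, which are not contained in $\langle a\rangle$. For the latter family I would first reduce to $s=0$: the assignment $a\mapsto a$, $b\mapsto a^sb$ extends to an automorphism $\varphi_s$ of $\dn$ with $\varphi_s(\langle a^t,b\rangle)=\langle a^t,a^sb\rangle$, and since an automorphism of $G$ carries a subgroup $(0,k)$-regular set in $\cay(G,S)$ to one in $\cay(G,\varphi_s(S))$ (both adjacency and inverse-closedness being preserved), it suffices to treat $H=\langle a^t,b\rangle$. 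Throughout I write $d=n/t$, so a cyclic subgroup has order $d$ and index $t$, while a dihedral subgroup has order $2d$ and index $t$.

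For the normal cyclic subgroups $H=\langle a^t\rangle$, both parts follow from Section~\ref{sec:secnor}. When $k$ is even, Theorem~\ref{th:th5.3} immediately gives that $H$ is a $(0,k)$-regular set of $\dn$, settling part~(\ref{it:it6.21}) here (the trivial case $H=\{e\}$ being vacuous). When $k$ is odd, Corollary~\ref{co:co5.0} reduces the question to whether $H$ is a $(0,1)$-regular set, which by Lemma~\ref{le:lehuang} is governed by the requirement that every $g$ with $g^2\in H$ admit $h\in H$ with $(gh)^2=e$. This is automatic for reflections (which already square to $e$), so I would only analyze rotations $g=a^i$ with $t\mid 2i$: solving $2i+2jt\equiv 0\pmod n$ for $j$ reduces, via $\gcd(2t,td)=t\gcd(2,d)$, to the statement that a solution exists for all admissible $i$ precisely when $d$ or $t$ is odd, with $g=a^{t/2}$ furnishing an explicit obstruction when both are even. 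This yields exactly the criterion ``$|H|$ or $n/|H|$ is odd'' in part~(\ref{it:it6.22}).

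The heart of the argument is the dihedral family $H=\langle a^t,b\rangle$ with $t\ge 2$, where I claim $H$ is a $(0,k)$-regular set for every $k$ with $1\le k\le|H|=2d$; this simultaneously handles part~(\ref{it:it6.21}) and the ``$H\nleq\langle a\rangle$'' alternative of part~(\ref{it:it6.22}) for these subgroups, most of which are non-normal once $t\ge 3$, so Section~\ref{sec:secnor} does not apply. By condition~(\ref{it:itc}) of Lemma~\ref{co:co2.1}, it is enough to produce an inverse-closed $S\subseteq G\setminus\{e\}$ meeting each of the $t-1$ non-trivial left cosets $a^jH$ $(1\le j\le t-1)$ in exactly $k$ elements. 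The structural facts I would establish are: each non-trivial coset contains exactly $d$ reflections, every one of which is an involution and hence self-inverse; and a rotation in $a^jH$ has its inverse in $a^{-j}H=a^{t-j}H$, so inversion pairs the coset $j$ with the coset $t-j$. Consequently the reflections may be chosen completely freely, while the rotations need only be balanced so that the chosen rotations form an inverse-closed set.

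The construction then selects, in each coset, $\rho_j$ rotations and $k-\rho_j$ reflections with $\rho_j\in[\max(0,k-d),\min(k,d)]$, taking $\rho_j=\rho_{t-j}$ and letting the rotations of $a^{t-j}H$ be the inverses of those in $a^jH$. I expect the main obstacle to be the unique self-paired coset $a^{t/2}H$ arising when $t$ is even: there the chosen rotations must themselves be inverse-closed, and since the $d$ rotations of this coset contain a fixed point of inversion exactly when $d$ is odd, only even values of $\rho_{t/2}$ are attainable when $d$ is even. The remaining step is the elementary verification that, for every $k$ with $1\le k\le 2d$, an admissible even value exists: $\rho_{t/2}=0$ works when $k\le d$ and $\rho_{t/2}=d$ (which is even and lies in $[\,k-d,\,d\,]$) works when $k>d$. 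Assembling these coset-by-coset choices yields the required inverse-closed $S$, which by Lemma~\ref{co:co2.1} makes $H$ a $(0,k)$-regular set, completing the dihedral case and hence the theorem.
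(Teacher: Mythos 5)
Your proposal is correct and follows essentially the same route as the paper: the same split into cyclic subgroups $\langle a^t\rangle$ (handled via Theorem~\ref{th:th5.3} for even $k$ and via Corollary~\ref{co:co5.0} together with the perfect-code criterion of Lemma~\ref{le:lehuang} for odd $k$) and dihedral subgroups $\langle a^t,a^sb\rangle$ (handled by building an explicit inverse-closed connection set via Lemma~\ref{co:co2.1}). The only difference is cosmetic: the paper constructs $S$ as a union of $k$ explicit right transversals (condition~(\ref{it:ite}) of Lemma~\ref{co:co2.1}, using sets $R_i$ of reflections and inverse-paired sets $T_i$ of rotations), whereas you build $S$ coset by coset via condition~(\ref{it:itc}) after an automorphism reduction to $s=0$, with the same underlying observation that reflections are self-inverse while rotations pair the cosets $a^jH$ and $a^{t-j}H$.
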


\begin{proof}
We deal with the two types of proper subgroups of $\dn$ separately.

\smallskip
\textsf{Claim 1.} If $H=\langle a^t\rangle$ with $t$ dividing $n$, then the following hold for any integer $k$ between $1$ and $|H|$: (i) if $k$ is even, then $H$ is a $(0, k)$-regular set of $\dn$; (ii) if $k$ is odd, then $H$ is a $(0,k)$-regular set of $\dn$ if and only if $t$ or $\frac{n}{t}$ is odd.
\smallskip

In fact, since $H =\langle a^t\rangle$ is a normal subgroup of $\dn$, statement (i) follows from Theorem~\ref{th:th5.3}. Note that $|H| = \frac{n}{t}$. By Corollary~\ref{co:co5.0}, if $k$ is odd, then $H$ is a $(0, k)$-regular set of $\dn$ if and only if $H$ is a $(0,1)$-regular set of $\dn$. By \cite[Lemma 2.9]{Huang18}, $H$ is a $(0,1)$-regular set of $\dn$ if and only if $t$ or $\frac{n}{t}$ is odd. Combining these, we obtain (ii) and hence establish Claim 1.

\smallskip
\textsf{Claim 2.} If $H=\langle a^t, a^sb\rangle$ with $t$ dividing $n$ and $0\leq s\leq n-1$, then $H$ is a $(0, k)$-regular set of $\dn$ for any integer $k$ between $1$ and $|H|$.
\smallskip

By Lemma~\ref{co:co2.1}, to prove this claim it suffices to construct $k$ pairwise disjoint subsets $S_1, S_2, \ldots, S_k$ of $\dn \setminus \{e\}$ such that their union is inverse-closed and $S_i \cup \{e\}$ is a right transversal of $H$ in $\dn$ for $1\leq i\leq k$. In fact, since $t$ divides $n$, we have $n = tm$ for some integer $m$. Since $|\langle a\rangle| = n$, we have $|\langle a^t\rangle| = m$ and hence $|H| = 2m$. Define
$$
R_i=\{a^{it+s-j}b: j=1,2,\ldots,t-1\},\;\, 0\leq i\leq m-1
$$
$$
T_i=\{a^{it+j}:j=1,2,\ldots, t-1\},\;\, 0\leq i\leq m-1.
$$
Note that these are $2m$ pairwise disjoint subsets of $\dn \setminus \{e\}$. A simple computation shows that, for each $1\leq i\leq m-1$, both $R_i\cup\{e\}$ and $T_i\cup\{e\}$ are right transversals of $H$ in $\dn$, and moreover $R^{-1}_i=R_i$ and $(T_i\cup T_{m-1-i})^{-1}=T_i\cup T_{m-1-i}$. If $1 \leq k \leq m$, we set
\[
S=R_0\cup R_1\cup\cdots\cup R_{k-1};
\]
and if $m < k \le |H|$, we set
\[
S=T_0\cup T_1\cup\cdots \cup T_{m-2}\cup T_{m-1}\cup R_{k-m-1}\cup R_{k-m-2}\cup\cdots\cup R_{1}\cup R_{0}.
\]
In either case, by Lemma~\ref{co:co2.1}, $H$ is a $(0, k)$-regular set in $\cay(\dn, S)$ and therefore a $(0, k)$-regular set of $\dn$. This establishes Claim 2.

Combining Claims 1 and 2, we obtain part (\ref{it:it6.21}) and the sufficiency in part (\ref{it:it6.22}).

Finally, for any proper subgroup $H$ of $\dn$, we have either $H\nleq \langle a \rangle$ or $H = \langle a^t\rangle \leq \langle a \rangle$ for some $t$ dividing $n$. Moreover, if $H = \langle a^t\rangle$ is a $(0, k)$-regular set of $\dn$ for some odd integer $k$ between $1$ and $|H| = \frac{n}{t}$, then by (ii) in Claim 1, either $t$ or $\frac{n}{t}$ is odd. This establishes the necessity in part (\ref{it:it6.22}) and hence completes the proof.
\end{proof}

The generalized quaternion group $\bn$ of order $4n$ is defined as
\begin{equation}
\label{eq:bn}
\bn=\langle a,b\ |\ a^n=b^2, a^{2n}=e, b^{-1}ab=a^{-1}\rangle.
\end{equation}
It is known that the subgroups of $\bn$ are $\langle a^t\rangle$ with $t$ dividing $2n$ and $\langle a^t, a^sb\rangle$ with $t$ dividing $2n$ and $0\leq s\leq t-1$. All subgroup $(0,1)$-regular sets of $\bn$ were determined by Ma \textit{et al.} in \cite[Theorem 1.7]{Ma19}. Part (b) of the following theorem extends this result to subgroup $(0,k)$-regular sets of $\bn$ for odd integers $k$.

\begin{thm}
\label{th:th6.4}
Let $\bn$ be the generalized quaternion group of order $4n \ge 8$ as given in \eqref{eq:bn}, and let $H$ be a proper subgroup of $\bn$. Then the following statements hold for any integer $k$ between $1$ and $|H|$:
\begin{enumerate}[\rm (a)]
\item if $k$ is even, then $H$ is a $(0, k)$-regular set of $\bn$;
\item if $k$ is odd, then $H$ is a $(0, k)$-regular set of $\bn$ if and only if either $H=\langle a^t\rangle$ with $t$ a divisor of $2n$ such that $\frac{2n}{t}$ is odd, or $H=\langle a^t, a^s b\rangle$ with $t$ an odd divisor of $2n$ and $0\leq s\leq t-1$.
 \end{enumerate}
\end{thm}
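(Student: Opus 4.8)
The plan is to treat the two families of proper subgroups of $\bn$ separately, in parallel with the proof of Theorem~\ref{th:th6.2}, and to route every positive assertion through the transversal criterion of Lemma~\ref{co:co2.1}. Two structural features of $\bn$ drive the whole argument: $\bn$ has a \emph{unique} involution, namely $a^n$ (indeed $(a^jb)^2 = a^n \ne e$ for every $j$, so no element of the form $a^jb$ is an involution), and the inverse of a ``reflection'' is $(a^jb)^{-1} = a^{j+n}b$. I will use these facts to control how inversion permutes the cosets of $H$.

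First suppose $H = \langle a^t\rangle$ with $t \mid 2n$; this is a normal subgroup of $\bn$ with $|H| = 2n/t$. For even $k$ the conclusion is immediate from Theorem~\ref{th:th5.3}. For odd $k$, Corollary~\ref{co:co5.0} reduces the question to whether $H$ is a $(0,1)$-regular set, and I claim this holds precisely when $2n/t$ is odd. If $2n/t$ is odd then $|H|$ is odd and Corollary~\ref{co:co5.2} gives the conclusion. If $2n/t$ is even then $H$, being a cyclic subgroup of $\langle a\rangle$ of even order, contains the unique involution $a^n$ of $\bn$; applying Lemma~\ref{le:lehuang} with $g = b$ (so that $g^2 = a^n \in H$) and observing that every $gh$ with $h \in H$ has the form $a^ib$ and hence satisfies $(gh)^2 = a^n \ne e$, I conclude that no suitable $h$ exists, so $H$ is not a $(0,1)$-regular set. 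This settles the case $H \le \langle a\rangle$ and matches the condition ``$\tfrac{2n}{t}$ odd'' in part (b).

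Now suppose $H = \langle a^t, a^sb\rangle$. Since $(a^sb)^2 = a^n \in H$, we have $H \cap \langle a\rangle = \langle a^{\gcd(t,n)}\rangle$, so after replacing $t$ by $\gcd(t,n)$ I may assume $t \mid n$ (and then $t$ is odd exactly when it is an odd divisor of $2n$, consistent with part (b)); thus $H \cap \langle a\rangle = \langle a^t\rangle$, $|G:H| = t$, and writing $n = tm$ we get $|H| = 4m$. The nontrivial left cosets are $L_i = a^iH$ for $i = 1,\ldots,t-1$, and by Lemma~\ref{co:co2.1}(c) it suffices to exhibit an inverse-closed $S \subseteq \bn \setminus \{e\}$ with $|S \cap L_i| = k$ for every $i$ (the requirement $H \cap S = \emptyset$ being automatic). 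Using the inverse formulae above, one checks that a reflection $a^pb \in L_i$ and its inverse $a^{p+n}b$ lie in the \emph{same} coset (because $t \mid n$), so the $2m$ reflections in each $L_i$ split into $m$ inverse-pairs, each contributing $2$ to $L_i$; whereas for a rotation $a^q \in L_i$ one has $a^{-q} \in L_{t-i}$, so a rotation inverse-pair contributes $1$ to $L_i$ and $1$ to $L_{t-i}$. For even $k$ I fill each coset using reflection-pairs, supplemented when $k > 2m$ by rotation-pairs (taken inside a single coset when $i = t-i$, and across $L_i, L_{t-i}$ otherwise); this produces the required $S$ and proves part (a) for these subgroups.

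For odd $k$ the parity of $t$ becomes decisive, and this is the crux. If $t$ is odd, the nontrivial cosets pair up as $\{L_i, L_{t-i}\}$ with no fixed coset, and I can make both counts odd by including an odd number of rotation inverse-pairs across each such pair, together with reflection-pairs to reach $k$; a short count that the numbers of pairs used stay within the available supplies shows every odd $k \le |H|-1 = 4m-1$ is realizable, giving the sufficiency in part (b). If $t$ is even, however, the coset $L_{t/2}$ has no partner, and here the unique-involution property bites: every element of $L_{t/2}$ has its inverse again in $L_{t/2}$ (for reflections because $t \mid n$, for rotations because $-\tfrac{t}{2} \equiv \tfrac{t}{2} \pmod{t}$), while $L_{t/2}$ contains no involution (the only involution $a^n$ lies in $H = L_0$). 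Hence inversion acts fixed-point-freely on $S \cap L_{t/2}$ for any inverse-closed $S$, forcing $|S \cap L_{t/2}|$ to be even; by Lemma~\ref{co:co2.1} this rules out a $(0,k)$-regular set for odd $k$. I expect this obstruction --- and the contrast with $\dn$, where the reflections \emph{are} involutions and no such parity barrier arises --- to be the step requiring the most care. Combining the two families yields the stated classification.
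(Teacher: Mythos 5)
Your proposal is correct, and while it follows the same skeleton as the paper (split into the cyclic subgroups $\langle a^t\rangle$ and the subgroups $\langle a^t,a^sb\rangle$, handle even $k$ via Theorem~\ref{th:th5.3} for the normal cyclic case, reduce odd $k$ to $k=1$ via Corollary~\ref{co:co5.0}, and route all constructions through Lemma~\ref{co:co2.1}), the internal arguments are genuinely different at three points. First, for the cyclic case with $\frac{2n}{t}$ even you derive the failure directly from Lemma~\ref{le:lehuang} with the witness $g=b$ (noting $t\mid n$ so $b^2=a^n\in H$ and $(bh)^2=a^n\neq e$ for all $h\in H$), whereas the paper simply cites \cite[Theorem 1.7]{Ma19}; your route is more self-contained. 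Second, for $H=\langle a^t,a^sb\rangle$ the paper writes down explicit right transversals $R_i,T_i,T$ and assembles unions with careful index bookkeeping, while you verify condition (c) of Lemma~\ref{co:co2.1} by a pairing/counting argument on the cosets $L_i=a^iH$, using that inversion fixes each coset on reflections (since $(a^pb)^{-1}=a^{p+n}b$ and $a^n\in H$) but swaps $L_i\leftrightarrow L_{t-i}$ on rotations; this is cleaner and less error-prone, and your supply counts check out ($m$ reflection-pairs and $2m$ rotations per coset cover all $k\le 4m$ of either parity when $t$ is odd, including the self-paired coset $L_{t/2}$ for even $k$ when $t$ is even). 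Third, your impossibility argument for odd $k$ and even $t$ is local --- inversion acts fixed-point-freely on $S\cap L_{t/2}$ since the unique involution $a^n$ lies in $H$, forcing $|S\cap L_{t/2}|$ even --- whereas the paper argues globally that $|S|=k(t-1)$ would be odd, forcing $a^n\in S$; both rest on the unique-involution property, and yours pinpoints where the parity obstruction lives. Two small points to tighten: when you replace $t$ by $\gcd(t,n)$, add the one-line observation that an odd $t$ dividing $2n$ necessarily divides $n$, so $H$ admits a presentation with odd $t$ if and only if the canonical index $|G:H|$ is odd (this makes the ``if and only if'' in part (b) well posed); and the trivial subgroup $H=\{e\}$ (i.e.\ $t=2n$, forcing $k=1$) falls outside the hypotheses of Corollaries~\ref{co:co5.0} and~\ref{co:co5.2}, though it is handled trivially by $S=\bn\setminus\{e\}$ --- a gloss the paper's own proof shares.
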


\begin{proof}
We handle the two types of proper subgroups of $\bn$ separately.

\smallskip
\textsf{Claim 1.} If $H=\langle a^t\rangle$ with $t$ dividing $2n$, then the following hold for any integer $k$ between $1$ and $\frac{2n}{t}$: (i) if $k$ is even, then $H$ is a $(0, k)$-regular set of $\bn$; (ii) if $k$ is odd, then $H$ is a $(0,k)$-regular set of $\bn$ if and only if $\frac{2n}{t}$ is odd.
\smallskip

In fact, since $H = \langle a^t\rangle $ is a normal subgroup of $\bn$,  by Theorem~\ref{th:th5.3}, $H$ is a $(0, k)$-regular set of $\bn$ for any even integer $k$ between $1$ and $\frac{2n}{t}$. By Corollary~\ref{co:co5.0}, for any odd integer $k$ between $1$ and $\frac{2n}{t}$, $H$ is a $(0, k)$-regular set of $\bn$ if and only if it is a $(0,1)$-regular set of $\bn$, which, by \cite[Theorem 1.7]{Ma19}, occurs if and only if $\frac{2n}{t}$ is odd. This proves Claim 1.

\smallskip
\textsf{Claim 2.} If $H=\langle a^t, a^sb\rangle$ with $t$ dividing $2n$ and $0\leq s\leq t-1$, then the following hold for any integer $k$ between $1$ and $|H|$: (i) if $k$ is even, then $H$ is a $(0,k)$-regular set of $\bn$; (ii) if $k$ is odd, then $H$ is a $(0,k)$-regular set of $\bn$ if and only if $t$ is odd.
\smallskip

Since any subgroup of $\bn$ not contained in $\langle a\rangle$ contains $a^n=b^2$, in the proof of Claim 2, we may assume without loss of generality that $t$ divides $n$, say, $n=mt$ for some integer $m$. Then $|H| = \frac{4n}{t} = 4m$ and $|G:H| = t$. Define
\begin{equation}
\label{eq:Ri}
R_i = \{a^{it+j}: j = 1, 2, \ldots, t-1\}, \;\, 0 \leq i \leq 2m-1
\end{equation}
\begin{equation}
\label{eq:Ti}
T_i = \{a^{it+s+j}b: j = 1, 2, \ldots, t-1\},\;\, 0\leq i\leq m-2
\end{equation}
and
$$
T=\{ab,a^2b,\ldots,a^{s-2}b,a^{s-1}b, a^{-t+s+1}b^3,a^{-t+s+2}b^3,\ldots,a^{-1}b^{3},b^3\}.
$$
Then
$$
T_i^{-1}=\{a^{it+s+j}b^3: j = 1, 2, \ldots, t-1\}, \;\, 0\leq i\leq m-2
$$
$$
T^{-1}=\{ab^3,a^2b^3,\ldots,a^{s-2}b^3,a^{s-1}b^3, a^{-t+s+1}b,a^{-t+s+2}b,\ldots,a^{-1}b,b\}.
$$
Note that $R_i^{-1} = R_{2m-1-i}$ for $0\leq i\leq 2m-1$. A simple computation shows that $R_i\cup\{e\}$ is a right transversal of $H$ in $\bn$ for $0\leq i\leq 2m-1$. Similarly, $T\cup\{e\}, T^{-1}\cup\{e\}, T_i\cup\{e\}$ and $T_{i}^{-1}\cup\{e\}$ for $0\leq i\leq m-2$ are all right transversals of $H$ in $\bn$.

For any even integer $k$ between $2$ and $2m$, set
$$
S = R_{0}\cup R_1\cup\cdots\cup R_{\frac{k}{2}-2}\cup R_{\frac{k}{2}-1}\cup R_{2m-\frac{k}{2}}\cup R_{2m-\frac{k}{2}+1}\cup\cdots\cup R_{2m-2}\cup R_{2m-1};
$$
and for any even integer $k$ between $2m+2$ and $4m$, set
$$
S=T\cup T^{-1}\cup T_0\cup\cdots\cup T_{m-2}\cup T_0^{-1}\cup\cdots\cup T_{m-2}^{-1}\cup R_{\frac{k}{2}-m-1}\cup\cdots\cup R_{0}\cup R_{3m-\frac{k}{2}}\cup\cdots\cup R_{2m-1}.
$$
In either case $S$ is inverse-closed. Thus, by Lemma~\ref{co:co2.1}, for any even integer $k$ between $2$ and $|H|$, $H$ is a $(0, k)$-regular set in $\cay(\bn, S)$ and hence a $(0, k)$-regular set of $\bn$. This proves (i) in Claim 2.

Now we prove (ii) in Claim 2. Suppose that $H$ is a $(0,k)$-regular set of $\bn$ for some odd $k$. Then by Lemma~\ref{co:co2.1} there exists an inverse-closed subset $S$ of $\bn \setminus \{e\}$ with $|S| = k(t-1)$ such that $\cay(\bn, S)$ admits $H$ as a $(0,k)$-regular set. If $t$ is even, then $|S|$ is odd. Since $S^{-1} = S$ and $a^n$ is the only involution in $\bn$, we must have $a^n \in S$, but this is a contradiction. Hence $t$ must be odd and the ``only if'' part in (ii) is established. To prove the ``if'' part in (ii), assume that $t$ is odd, say, $t = 2r + 1$ for some integer $r$. Define
$$
R=\{a, a^{-1}, a^2, a^{-2}, \ldots, a^{r}, a^{-r}\}
$$
$$R'=\{a^{r+1}, a^{-r-1}, a^{r+2}, a^{-r-2}, \ldots, a^{t-1}, a^{1-t}\}.$$
Then both $R$ and $R'$ are inverse-closed, and both $R \cup \{e\}$ and $R' \cup \{e\}$ are right transversals of $H$ in $\bn$. For any odd integer $2l+1$ between $1$ and $2m-1$, define
\[
S=R_1 \cup \cdots \cup R_l \cup R_{2m-1-l} \cup \cdots \cup R_{2m-2} \cup R;
\]
and for any odd integer $2l+1$ between $2m+1$ and $4m-1$, define
\[
S=T\cup T^{-1}\cup T_0\cup\cdots\cup T_{m-2}\cup T_0^{-1}\cup\cdots\cup T_{m-2}^{-1}\cup R_1\cup\cdots\cup R_{l-m}\cup R_{3m-1-l}\cup\cdots\cup R_{2m-2}\cup R.
\]
In either case $S$ is inverse-closed. Thus, by Lemma~\ref{co:co2.1}, for any odd integer $k$ between $1$ and $|H|$, $H$ is a $(0, k)$-regular set in $\cay(\bn,S)$ and hence a $(0,k)$-regular set of $\bn$. This completes the proof of Claim 2.

The desired result follows from Claims 1 and 2 immediately.
\end{proof}

\section{$(0,k)$-regular sets in cubelike graphs}
\label{sec:seccube}

In this and the next sections we use $V(n,p)$ to denote the $n$-dimensional linear space over $\FFF_p$, where $n \ge 1$, $p$ is a prime, and $\FFF_p$ is the finite field with $p$ elements. We identify the elementary abelian $p$-group $\zp$ with the additive group of $V(n,p)$. The vectors of $V(n,p)$ are treated as column vectors and the zero vector of $V(n,p)$ is denoted by $\mathbf{0}_n$. Given an $m\times m$ matrix $M$ over $\FFF_p$, we use $M_i$ to denote the $i$-th column of $M$, for $1\leq i\leq m$.
For any integer $1\le t\leq n$, let $P_{t\times n}(p)= (\mathbf{I}\;\,\mathbf{0})$ be the $t \times n$ matrix over $\FFF_p$ whose first $t$ columns form the $t \times t$ identity matrix $\mathbf{I}$ and last $n-t$ columns form the $t \times (n-t)$ all-$0$ matrix $\mathbf{0}$.

The $n$-dimensional cube $Q_n$ is the Cayley graph of $\zzz$ with respect to the set of vectors with exactly one nonzero coordinate. In general, any Cayley graph of $\zzz$ is called \cite{god01} a {\em cubelike graph}. A $(0,1)$-regular set in $Q_n$ is precisely a perfect $1$-error correcting binary code of length $n$ in coding theory. It is known (see, for example, \cite{heden08}) that $(0,1)$-regular sets in $Q_n$ exist if and only if $n = 2^t-1$ for some $t \ge 1$, and most likely Hamming \cite{hamming50} was the first person who constructed $(0,1)$-regular sets in $Q_{2^t-1}$. In this section, we generalize these results to $(0,k)$-regular sets in hypercubes $Q_n$. We first give a sufficient condition for a connected cubelike graph admitting $(0,k)$-regular sets. For any connected cubelike graph $\cay(\zzz,S)$, denote $U=\pmat{\bu_1 & \bu_2 & \cdots & \bu_d}$ the $n\times d$ matrix with rank $n$, where $S=\{\bu_1, \bu_2, \ldots, \bu_d\}$ for $1\leq d$. Let $N$ be the $t \times (2^t-1)$ matrix over $\FFF_2$ whose columns are the nonzero vectors of $V(t,2)$. For any integer $1\leq k$, denote $N(k)=\left(\underbrace{
\begin{array}{cccc}
N  & N & \cdots & N
\end{array}}_{k \text{ times}}
\right)$.

\begin{lem}
\label{th:th5.1}
Let $n$ and $k$ be integers with $n > k \ge 1$. Let $\cay(\zzz,S)$ be a connected cubelike graph with $|S|=d$. If  there is  an integer $1\le t\le n$ such that $d=(2^t-1)k$ and there is a non-singular matrix $R$ such that $P_{t\times n}(2)RU=N(k)$, then $\cay(\zzz,S)$ admits a $(0,k)$-regular set.
\end{lem}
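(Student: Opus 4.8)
The plan is to realize the desired $(0,k)$-regular set as the kernel of a suitable ``syndrome'' homomorphism, generalizing the classical construction of the Hamming code as the null space of its parity-check matrix. Define the linear map $\psi: \zzz \to V(t,2)$ by $\psi(\bx) = P_{t\times n}(2) R \bx$, i.e.\ $\psi$ reads off the first $t$ coordinates of $R\bx$. The matrix of $\psi$ is the top $t$ rows of the non-singular matrix $R$, which are linearly independent, so $\psi$ is surjective and $D := \ker\psi = \{\bx \in \zzz : P_{t\times n}(2)R\bx = \mathbf{0}_t\}$ is a subspace of $\zzz$ of index $2^t$; in particular $D$ is a nonempty proper subset (and a linear set). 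By hypothesis $P_{t\times n}(2) R U = N(k)$, so for each $\bu_i \in S$ we have $\psi(\bu_i) = N(k)_i$, the $i$-th column of $N(k)$. Since $N(k)$ consists of $k$ copies of $N$ and the columns of $N$ are precisely the $2^t - 1$ nonzero vectors of $V(t,2)$, each occurring once, it follows that $\psi(\bu_i)$ is nonzero for every $i$ and that each nonzero vector of $V(t,2)$ equals $\psi(\bu_i)$ for exactly $k$ indices $i$.

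With this counting fact in hand, the verification that $D$ is $(0,k)$-regular is immediate, and it can be packaged cleanly through the covering machinery of Section~\ref{sec:prel}. Regard $K_{2^t}$ as the Cayley graph $\cay(V(t,2), V(t,2)\setminus\{\mathbf{0}_t\})$, in which the single vertex $\{\mathbf{0}_t\}$ is a $(0,1)$-regular set. Because $\bx$ and $\by$ are adjacent in $\cay(\zzz,S)$ exactly when $\bx + \by \in S$, and then $\psi(\bx)+\psi(\by) = \psi(\bx+\by) \ne \mathbf{0}_t$, the surjective homomorphism $\psi$ is a graph homomorphism onto $K_{2^t}$. Moreover, for adjacent $\mathbf{s}_1, \mathbf{s}_2 \in V(t,2)$ and any $\bu \in \psi^{-1}(\mathbf{s}_1)$, the neighbours $\bu + \bu_i$ lying in $\psi^{-1}(\mathbf{s}_2)$ are exactly those with $\psi(\bu_i) = \mathbf{s}_1 + \mathbf{s}_2$, of which there are exactly $k$ by the counting fact; hence $\psi$ is a $k$-covering. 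Applying part~(a) of Lemma~\ref{le:le1.4} to the $(0,1)$-regular set $\{\mathbf{0}_t\}$ then shows that $\psi^{-1}(\{\mathbf{0}_t\}) = D$ is a $(0,k)$-regular set in $\cay(\zzz,S)$, as required. (Alternatively one verifies $D$ directly: for $\bx \in D$ each $\psi(\bx + \bu_i) = \psi(\bu_i) \ne \mathbf{0}_t$, so $D$ is independent; for $\bx \notin D$ with $\psi(\bx) = \mathbf{s} \ne \mathbf{0}_t$, exactly $k$ of the $\bu_i$ satisfy $\psi(\bu_i) = \mathbf{s}$, giving exactly $k$ neighbours of $\bx$ in $D$.)

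The construction itself is short; the substantive points to get right are, first, the surjectivity of $\psi$ (so that $D$ is a genuine proper nonempty subset rather than all of $\zzz$), which is where the non-singularity of $R$ enters, and second, the exact multiplicity $k$ in the syndrome count, which is precisely the reason the parity-check matrix $N$ of the Hamming code, repeated $k$ times, is the right object. I expect the only real obstacle to be conceptual rather than computational: recognizing that the hypothesis $P_{t\times n}(2)RU = N(k)$ says exactly that, after the change of basis $\bx \mapsto R\bx$ and projection to the first $t$ coordinates, the connection set $S$ induces on $V(t,2)$ the same ``each nonzero syndrome $k$ times'' pattern that underlies the perfect Hamming code, so that the standard perfect-code argument goes through verbatim with single adjacency replaced by $k$-fold adjacency.
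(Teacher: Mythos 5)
Your proposal is correct and follows essentially the same route as the paper: both take the set to be the null space of $M = P_{t\times n}(2)R$ and use the fact that, by the hypothesis $P_{t\times n}(2)RU = N(k)$, every nonzero syndrome in $V(t,2)$ is realized by exactly $k$ elements of $S$, which gives independence and the exact count of $k$ neighbours. The only (harmless) deviations are cosmetic: you deduce that every coset of $\ker\psi$ is reached from the surjectivity of $\psi$ (rank of $PR$) rather than from connectivity of the graph as in the paper's Claim~3, and you optionally repackage the verification through the $k$-covering of $K_{2^t}$ via Lemma~\ref{le:le1.4}(a).
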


\begin{proof}
Let $\Gamma=\cay(\zzz,S)$ be a connected cubelike graph.  Assume that $|S| = d := (2^t-1)k \le 2^n - 1$ for some $1 \le t \le n$. Write $S = \{\bu_1, \bu_2, \ldots, \bu_d\} \subseteq V(n,2) \setminus \{\mathbf{0}_n\}$.  Set $P=P_{t\times n}(2)$.
Since there is an integer  $1\le t\le n$ such that $d=(2^t-1)k$ and there is a non-singular matrix $R$ such that $PRU=N(k)$, then  there exists an $n\times d$ matrix $Q$ over $\FFF_2$ with rank $n$ such that
\[
PQ=\left(\underbrace{
\begin{array}{cccc}
N  & N & \cdots & N
\end{array}}_{k \text{ times}}
\right).
\]
In fact, we can take $Q=RU$, where
$
U = \pmat{\bu_1 & \bu_2 & \cdots & \bu_d}
$
 is the $n \times d$ matrix with rank $n$.  Set $M=PR$. Since $P$ has rank $t$, $R$ is a non-singular matrix, $M$ is a $t\times n$ matrix over $\FFF_2$ with rank $t$. Hence the null space of $M$, namely
\begin{equation}
\label{eq:Wn2}
W=\{\bw\in V(n,2)\ : \ M \bw=\mathbf{0}_n\},
\end{equation}
is an $(n-t)$-dimensional subspace of $V(n,2)$. In the rest proof we will show that $W$ is a $(0,k)$-regular set in $\Gamma$.

\smallskip
\textsf{Claim 1.} $W$ is an independent set of $\Gamma$.
\smallskip

Suppose to the contrary that there exist distinct vertices $\bv, \bw \in W$ which are adjacent in $\Ga$. Then $\bw = \bv + \bu_i$ for some $\bu_i \in S$. Since $M=PR$, $Q=RU$ and $\bu_i$ is a column of $U$, we have
\[
\mathbf{0}_n = M\bw = M(\bv + \bu_i) = M\bv + M\bu_i = M\bu_i = (PR)\bu_i =P(R\bu_i)=P(Q_i)=(PQ)_i,
\]
which contradicts the fact that all column vectors of $PQ$ are nonzero. This proves Claim 1.

\smallskip
\textsf{Claim 2.} Every vertex $\bv \in V(n,2) \setminus W$ has at most $k$ neighbours in $W$.
\smallskip

Suppose to contrary that some $\bv \in V(n,2) \setminus W$ has (at least) $k+1$ distinct neighbours $\bw_1,\bw_2,\ldots,\bw_k,\bw_{k+1}$ in $W$. Then
 \[
 \bv=\bw_1+\bu_{i_1}=\bw_2+\bu_{i_2}=\cdots=\bw_k+\bu_{i_k}=\bw_{k+1}+\bu_{i_{k+1}}
 \]
for $k+1$ distinct elements $\bu_{i_1},\bu_{i_2},\ldots, \bu_{i_{k}}, \bu_{i_{k+1}}$ of $S$. Hence
$$
\mathbf{0}_n = M(\bw_j - \bw_1) = M(\bu_{i_1} - \bu_{i_j}) = M\bu_{i_1} - M\bu_{i_j}
$$
for $2\leq j\leq k+1$. Therefore, $M\bu_{i_1}=M\bu_{i_2}=\cdots =M\bu_{i_{k+1}}$. Since $Q=RU$, $M=PR$ and $\bu_{i_j}$ is a column of $U$, we have $M\bu_{i_j}=P(R\bu_{i_j})=P(Q_{i_j})=(PQ)_{i_j}$ for $1\leq j\leq k+1$. So there exist $k$ different columns of $PQ$ which are equal to $(PQ)_{i_1}$, but this contradicts the fact that $PQ$ does not contain $k+1$ identical columns. This proves Claim 2.

\smallskip
\textsf{Claim 3.} For any $\bv \in V(n,2) \setminus W$, there exists an element $\bu_i \in S$ such that $\bv \in W+\bu_i$.
\smallskip

In fact, since $\Ga$ is connected, any $\bv \in V(n,2) \setminus W$ can be expressed as $\bv = \bu_{i_1}+\bu_{i_2}+\cdots+\bu_{i_r}$ for some (not necessarily distinct) elements $\bu_{i_1}, \bu_{i_2}, \ldots, \bu_{i_r} \in S$.
Thus,
$$
M\bv = M\bu_{i_1}+M\bu_{i_2}+\cdots+M\bu_{i_r} = (PQ)_{i_1}+(PQ)_{i_2}+\cdots+(PQ)_{i_r}.
$$
Since $M\bv \ne \mathbf{0}_n$ and all nonzero vectors of $V(t,2)$ appear in the columns of $N$, $M\bv$ occurs as a column of $N$, say, the $i$-th column of $N$ for some $i$ between $1$ and $2^t-1$. Then $M\bv$ also occurs as the $i$-th column of $PQ$, that is, $M\bv=(PQ)_i$.  Since  $Q=RU$, $M=PR$, taking the $i$-th column $\bu_i$ of $U$, we have $M\bv=(PQ)_{i}=M\bu_i$. Hence $\bu_i-\bv\in W$ and $\bv \in W+\bu_i$ as desired.

\smallskip
\textsf{Claim 4.} For any $\bv \in V(n,2) \setminus W$, there exist exactly $k$ distinct elements $\bu_{i_1}, \bu_{i_2}, \ldots, \bu_{i_k}$ of $S$ such that $\bv \in W+\bu_{i_j}$ for $1\leq j \leq k$.
\smallskip

In fact, by Claim 3, for any $\bv \in V(n,2) \setminus W$, there exists an element $\bu_i \in S$ such that $\bv \in W+\bu_i$. Since $(PQ)_h = (PQ)_l$ for any $1\leq h \leq d$ and $l\equiv h + (2^t-1)j \pmod{d}$ with $0\leq j\leq k-1$, we have $\bv \in W+\bu_{l}$ for all $l \in \{1, 2, \ldots, d\}$ such that $l \equiv i+(2^t-1)j\; (\mathrm{mod}\; d)$ for some $0\leq j\leq k-1$. This means that there are at least $k$ distinct elements $\bu_{l} \in S$ such that $\bv \in W + \bu_{l}$. Combining this with Claim 2, we obtain Claim 4.

By Claims 1 and 4, $W$ is a $(0, k)$-regular set in $\Gamma$. This completes the proof.
\end{proof}

Next, we give the main result in this section.


\begin{thm}
\label{cor:Qn}
Let $n$ and $k$ be integers with $n > k \ge 1$. Then the following statements are equivalent:
\begin{enumerate}[\rm (a)]
\item\label{it:it31}
$Q_n$ admits a $(0,k)$-regular set;
\item\label{it:it32}
$n=(2^{t}-1)k$ for some integer $t \ge 1$;
\item\label{it:it33}
$k$ divides $n$ and $Q_n$ is a $k$-cover of $K_{\frac{n}{k}+1}$.
\end{enumerate}
\end{thm}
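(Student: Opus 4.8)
The plan is to establish the short directions $(b)\Rightarrow(c)$ and $(c)\Rightarrow(a)$ (the implication $(b)\Rightarrow(a)$ falls out along the way), and to isolate $(a)\Rightarrow(b)$ as the substantial step. For the constructive part, note that $Q_n=\cay(\zzz,S)$ with $S=\{\be_1,\dots,\be_n\}$, so in the notation of Lemma~\ref{th:th5.1} the matrix $U$ is a permutation matrix and $d=|S|=n$. Assuming (b) we have $d=n=(2^t-1)k$; since $U$ is invertible and $N(k)$ has rank $t$, I would complete the $t$ independent rows of $N(k)U^{-1}$ to a basis to obtain a non-singular $R$ with $P_{t\times n}(2)RU=N(k)$. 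Then Lemma~\ref{th:th5.1} yields the $(0,k)$-regular subspace $W=\{\bw:PR\bw=\mathbf{0}_n\}$ of dimension $n-t$, which already proves (a) and gives $|G:W|=2^t=\tfrac nk+1$. Applying Theorem~\ref{th:th2.0} to this $W$ (a subgroup of the abelian group $\zzz$, so $S\subset N_G(W)$ holds trivially) converts ``$W$ is a $(0,k)$-regular set'' into a $k$-covering $f\colon Q_n\to K_{n/k+1}$, which together with $k\mid n$ is exactly (c). Finally $(c)\Rightarrow(a)$ is immediate: a single vertex is a $(0,1)$-regular set of $K_{n/k+1}$, so its fiber is a $(0,k)$-regular set of $Q_n$ by part (a) of Lemma~\ref{le:le1.4} (or directly by Theorem~\ref{th:th1.1}).

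For $(a)\Rightarrow(b)$ I would first reduce to a purely numerical statement. Since $Q_n$ is $n$-regular on $2^n$ vertices, \eqref{le:le1.1} forces $|D|=\frac{k\,2^n}{n+k}$ for every $(0,k)$-regular set $D$, and a direct check shows that, under $n>k$, condition (b) is equivalent to ``$|D|$ is a power of $2$'' (indeed $|D|=2^{n-t}\iff n+k=2^tk\iff n=(2^t-1)k$). So it suffices to prove $|D|$ is a power of $2$. The integrality $(n+k)\mid k2^n$ alone is too weak: writing $f=\gcd(n,k)$, $n=fn'$, $k=fk'$ with $\gcd(n',k')=1$, it yields only $n'+k'=2^s$, which permits cases failing (b) (e.g.\ $n=5$, $k=3$, where $|D|=12$). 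Hence the argument must use genuine structure of $Q_n$, not merely counting.

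The extra input is spectral. Writing $a\cdot x$ for the standard bilinear form on $\FFF_2^n$ and $\widehat{\chi_D}(a)=\sum_x\chi_D(x)(-1)^{a\cdot x}$, the defining relation $\sum_{i=1}^n\chi_D(v+\be_i)+k\,\chi_D(v)=k$ transforms, after multiplying by $(-1)^{a\cdot v}$ and summing over $v$, into
\[
\bigl(n+k-2\,\mathrm{wt}(a)\bigr)\,\widehat{\chi_D}(a)=k\,2^n\,[\,a=\mathbf{0}_n\,].
\]
Thus $\widehat{\chi_D}$ is supported on $\{\mathbf{0}_n\}\cup L_w$, where $L_w=\{a:\mathrm{wt}(a)=w\}$ and $w=\tfrac{n+k}2$ (in particular $n\equiv k\pmod 2$). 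Writing $\chi_D=\rho\,\mathbf{1}+g$ with $\rho=|D|/2^n$, the function $g$ lies in the eigenspace spanned by the weight-$w$ characters, is two-valued ($1-\rho$ on $D$ and $-\rho$ elsewhere), and satisfies $g(x+\mathbf{1})=(-1)^w g(x)$. If $w$ were odd this would force $\rho=\tfrac12$, i.e.\ $n=k$, contradicting $n>k$; hence $w$ is even.

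The main obstacle is to upgrade this two-level spectral restriction to the statement that $|D|$ is a power of $2$. My plan is to exploit $\chi_D^2=\chi_D$, which is equivalent to the identity $g^2=\rho(1-\rho)\,\mathbf{1}+(1-2\rho)\,g$: expanding $g=\sum_{a\in L_w}c_a(-1)^{a\cdot x}$ and using the character product rule, the vanishing of every homogeneous component of $g^2$ of weight outside $\{0,w\}$ becomes a rigid quadratic system in the integer coefficients $\widehat{\chi_D}(a)$, which I expect to pin down $\rho=2^{-t}$. As alternative routes I would try a Poisson-summation argument (choosing a subspace $U$ with $U^{\perp}$ free of weight-$w$ vectors, so that $|D\cap U|=|D|/2^{\dim U^{\perp}}$ forces $2^{\dim U^{\perp}}\mid|D|$) or an appeal to the classification of equitable $2$-partitions of $Q_n$ with quotient matrix $\bigl(\begin{smallmatrix}0&n\\k&n-k\end{smallmatrix}\bigr)$. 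Converting this spectral rigidity into the exact power-of-two conclusion is the crux and the step I expect to be hardest.
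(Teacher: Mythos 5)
Your constructive directions are sound and essentially coincide with the paper's: for (b)$\Rightarrow$(a) you build a non-singular $R$ whose first $t$ rows realize $N(k)$ (the paper does the same via a rank-$n$ matrix $Q$ with $PQ=N(k)$ and $Q=R'U$), and you route (b)$\Rightarrow$(c) through Theorem~\ref{th:th2.0} applied to the linear set $W$, with (c)$\Rightarrow$(a) immediate from Lemma~\ref{le:le1.4}(a). The genuine gap is (a)$\Rightarrow$(b): you do not prove it. Your Fourier reduction is correct as far as it goes --- the support of $\widehat{\chi_D}$ on $\{\mathbf{0}_n\}\cup L_w$ with $w=\frac{n+k}{2}$, the parity argument forcing $w$ even, and the reformulation of (b) as ``$|D|$ is a power of $2$'' --- but everything after that is a list of strategies you \emph{expect} to work (``which I expect to pin down $\rho=2^{-t}$'', ``the step I expect to be hardest''). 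None is carried out, so the cycle of implications is not closed and the proof is incomplete.

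That said, your diagnosis of where the difficulty sits is sharper than the paper's own treatment. The paper disposes of (a)$\Rightarrow$(b) in one line by asserting that \eqref{le:le1.1} gives ``$1+\frac{|S|}{k}$ divides $2^n$''; but \eqref{le:le1.1} only yields $(n+k)\,|D|=k\,2^n$, and the stronger assertion presupposes $|D|\mid 2^n$, which is automatic when $D$ is a subgroup but not for an arbitrary $(0,k)$-regular set. Your example $n=5$, $k=3$ (where $(n+k)\mid k2^n$ with $|D|=12$ yet (b) fails) shows counting alone cannot suffice, so some structural input really is needed; in that particular case one can finish by hand: translating so that $\mathbf{0}_5\in D$, the evenness of $w=4$ forces $D=D+\mathbf{1}$, the condition at weight-$1$ vertices forces the weight-$2$ part of $D$ to be the edge set of a $5$-cycle on the coordinates, and then $D$ fails to be independent (e.g.\ a weight-$2$ element and the complement of an adjacent edge of the cycle differ in one coordinate). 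To repair your proof you must either actually establish that $|D|$ is a power of $2$ for every $(0,k)$-regular set $D$ of $Q_n$, or weaken statement (a) to subgroup (equivalently, coset) $(0,k)$-regular sets, for which the counting argument via $|G:D|=1+\frac{|S|}{k}$ is valid and the rest of your argument goes through.
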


\begin{proof}
 Let $Q_n=\cay(\zzz,S)$, where $S=\{\be_i: 1\le i\le n\}$ with $\be_1, \be_2, \ldots, \be_n$ the standard basis of $\mathbb{Z}_p^n$.

 \eqref{it:it31} $\Rightarrow$ \eqref{it:it32} If $Q_n$ admits a $(0,k)$-regular set, then by (\ref{le:le1.1}), $1+\frac{|S|}{k}$ divides $2^n$ and hence $\Ga$ has degree $|S| = (2^t-1)k \le 2^n - 1$ for some $1 \le t \le n$.

\eqref{it:it32} $\Rightarrow$ \eqref{it:it31} By Lemma \ref{th:th5.1}, it is sufficient to prove that there is an $n\times n$ non-singular matrix $R$ such that $P_{t\times n}(2)RU=N(k)$.
Since $S=\{\be_1, \be_2, \ldots, \be_n\}$, thus $U$ is the $n\times n$ identity matrix with rank $n$.
Set $P=P_{t\times n}(2)$. Note that $P$ is a $t\times n$ matrix with rank $t$ and $N$ is a $t\times 2^t-1$ matrix.
Since $1 \leq t \leq n$ and $n = (2^t-1)k$, there exists an $n\times n$ matrix $Q$ over $\FFF_2$ with rank $n$ such that
\[
PQ=\left(\underbrace{
\begin{array}{cccc}
N  & N & \cdots & N
\end{array}}_{k \text{ times}}
\right).
\]
Since $Q$ and $U$ have the same rank and dimension, there are two $n\times n$ non-singular matrices $R$, $L$ over $\FFF_2$ such that $Q=RUL$. Since $U$ is the identity matrix, thus $Q=RLU=R'U$ with $R'=RL$. Combining this with Lemma \ref{th:th5.1}, we obtain the result.

\eqref{it:it31} $\Leftrightarrow$ \eqref{it:it33} It follows from Theorem \ref{th:th2.0}.
\end{proof}

Since each hypercube graph with  $(0,k)$-regular sets admits a non-singular matrix $R$ such that $P_{t\times n}(2)RU=N(k)$, thus  the proof of Lemma \ref{th:th5.1} also gives a construction of a subgroup $(0,k)$-regular set in any hypercube with order $2^n$ and degree $(2^t-1)k \le 2^n - 1$. In fact, this subgroup $(0,k)$-regular set (as defined in \eqref{eq:Wn2}) is a ``linear'' $(0,k)$-regular set in the sense that it is a subspace of the vector space $V(n,2)$. We illustrate this construction by the following example.

\begin{exa}
Since $6 = (2^2 - 1) \cdot 2$, by Corollary \ref{cor:Qn}, $Q_6$ admits $(0,2)$-regular sets. Following the proof of Theorem \ref{th:th5.1}, we have $t=2$ and
$$
U=\pmat{1 & 0 & 0 & 0 & 0 & 0 \\
0 & 1 & 0 & 0 & 0 & 0\\
0 & 0 & 1 & 0 & 0 & 0\\
0 & 0 & 0 & 1 & 0 & 0\\
0 & 0 & 0 & 0 & 1 & 0\\
0 & 0 & 0 & 0  & 0  & 1},\,\,
Q=R=\pmat{0 & 1 & 1 & 0 & 1 & 1 \\
1 & 0 & 1 & 1 & 0 & 1\\
1 & 0 & 0 & 0 & 0 & 0\\
0 & 1 & 0 & 1 & 0 & 0\\
0 & 0 & 1 & 0 & 1 & 0\\
0 & 0 & 0 & 0  & 0  & 1},
$$
$$
P=\pmat{1 & 0 & 0 & 0 & 0 & 0 \\
0 &1 & 0 & 0 & 0 & 0}, \,\,
M = \pmat{0 & 1 & 1 & 0 & 1 & 1 \\
1 & 0 & 1 & 1 & 0 & 1}.
$$
The null space $W = \{\bw\in V(6,2)\ : \ M\bw=\mathbf{0}_6\}$ of $M$ is a $(0,2)$-regular set in $Q_6$. Solving
$$
\left\{
\begin{array}{ll}
w_2+w_3+w_5+w_6=0\\
w_1+w_3+w_4+w_6=0,
\end{array}
\right.
$$
we obtain that $W$ is the set of column vectors of the matrix:
$$
\left(
 \begin{array}{cccccccccccccccc}
   0 & 0 & 0 & 0 & 0 & 0 & 0 & 0 & 1 & 1 & 1 & 1 & 1 & 1 & 1 & 1\\
   0 & 0 & 0 & 0 & 1 & 1 & 1 & 1 & 0 & 0 & 0 & 0 & 1 & 1 & 1 & 1\\
   0 & 0 & 1 & 1 & 1 & 1 & 0 & 0 & 0 & 0 & 1 & 1 & 1 & 1 & 0 & 0\\
   0 & 1 & 0 & 1 & 0 & 1 & 0 & 1 & 0 & 1 & 0 & 1 & 0 & 1 & 0 & 1\\
   0 & 1 & 0 & 1 & 1 & 0 & 1 & 0 & 1 & 0 & 1 & 0 & 0 & 1 & 0 & 1\\
   0 & 1 & 1 & 0 & 1 & 0 & 0 & 1 & 1 & 0 & 0 & 1 & 0 & 1 & 1 & 0
   \end{array}
   \right).
$$
\end{exa}

\section{$(0,k)$-regular sets in the Lee metric}
\label{sec:secp}

The Cartesian product $\Gamma_1 \Box \Gamma_2 \Box \cdots \Box \Gamma_n$ of $n$ graphs $\Gamma_1, \Gamma_2, \ldots, \Gamma_n$ is the graph with vertex set $V(\Gamma_1)\times V(\Gamma_2)\times\cdots\times V(\Gamma_n)$ such that two vertices $(u_1,u_2,\ldots,u_n), (v_1,v_2,\ldots,v_n)$ are adjacent if and only if there is exactly one subscript $i$ such that $u_i\neq v_i$, and for this $i$, $u_i$ and $v_i$ are adjacent in $\Gamma_i$. Denote by $C_{q}^{\Box n}$ the Cartesian product of $n$ copies of the cycle $C_q$ of length $q$, where $n \ge 1$ and $q \ge 3$ are integers. Obviously, $C_{q}^{\Box n}$ is a $2n$-regular graph. One can see that $C_{q}^{\Box n}$ is the Cayley graph $\cay(\mathbb{Z}_q^n, S)$, where $S=\{\be_i, -\be_i\ :\ 1\leq i\leq n\}$ with $\be_i$ is the element of $\mathbb{Z}_q^n$ with $i$-th coordinate $1$ and all other coordinates $0$. The distance in $C_{q}^{\Box n}$ between two vertices is exactly the Lee distance between the corresponding codewords. The Lee ball with radius $e$ and center $\mathbf{x} \in \ZZZ_q^n$ is the set of elements of $\ZZZ_q^n$ with Lee distance at most $e$ to $\mathbf{x}$. A code $C \subseteq \ZZZ_q^n$ is called a $q$-ary perfect $e$-code under the Lee metric if the Lee balls of radius $e$ with centers in $C$ form a partition of $\ZZZ_q^n$. The well-known Golomb-Welch conjecture asserts that there is no $q$-ary perfect $e$-codes of length $n$ under the Lee metric for $n > 2$, $e > 1$ and $q \ge 2e+1$. A central problem for Lee codes, this 50-year-old conjecture is still wide open \cite{horak18} in its general form.

Note that $q$-ary perfect $1$-codes of length $n$ under the Lee metric are exactly $(0, 1)$-regular sets in $C_{q}^{\Box n}$. In \cite{gol70}, Golomb and Welch constructed $p$-ary perfect $1$-codes of length $n = \frac{p-1}{2}$ under the Lee metric, where $p$ is an odd prime. More generally, a special case of a result \cite[Theorem 15]{ALB09} proved by AlBdaiwi, Horak and Milazzo asserts that for any odd prime $p$, a $p$-ary linear perfect $1$-code of length $n$ under the Lee metric exists if and only if $2n = p^t-1$ for some $t$. The following theorem generalizes this result to $(0,k)$-regular sets in $C_{p}^{\Box n}$.

\begin{thm}
\label{th:th5.2}
Let $p$ be an odd prime and $n, k$ positive integers. Then $C_{p}^{\Box n}$ admits $(0, k)$-regular sets if and only if $2n = (p^t-1)k \le p^n - 1$ for some $1 \le t \le n-1$.
\end{thm}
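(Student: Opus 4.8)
The plan is to treat the two implications separately: sufficiency by an explicit \emph{linear} construction paralleling Lemma~\ref{th:th5.1}, and necessity by combining \eqref{le:le1.1} with a structural analysis of the group algebra of $\zp$. For sufficiency, assume $2n=(p^t-1)k$ with $1\le t\le n-1$. Since $p$ is odd, the $p^t-1$ nonzero vectors of $V(t,p)$ fall into $(p^t-1)/2$ antipodal pairs $\{\by,-\by\}$. I would build a $t\times n$ matrix $M$ over $\FFF_p$ whose columns $M\be_1,\dots,M\be_n$ form a multiset of nonzero vectors such that (i) for every nonzero $\by$ the multiplicities satisfy $c_{\by}+c_{-\by}=k$, and (ii) $M$ has rank $t$. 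Condition (i) is consistent exactly because $\sum_{\by\neq\mathbf{0}}c_{\by}=\tfrac{k}{2}(p^t-1)=n$, and it is realized by freely splitting the total mass $k$ inside each antipodal pair; (ii) is arranged at the same time by letting each standard basis vector of $V(t,p)$ occur among the columns, which is possible since they lie in distinct pairs and $t\le n$. Put $W=\{\bw\in\zp:M\bw=\mathbf{0}\}$, a subgroup of order $p^{\,n-t}$, hence a proper nonempty subset as $1\le n-t\le n-1$. Then $W$ is independent in $\cay(\zp,S)$ because no column of $M$ vanishes, and any $\bv\notin W$ (i.e.\ $M\bv\neq\mathbf{0}$) has as its neighbours in $W$ exactly the $\bv\pm\be_i$ with $M\be_i=\mp M\bv$, numbering $c_{M\bv}+c_{-M\bv}=k$. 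Thus $W$ is a linear $(0,k)$-regular set, which simultaneously supplies the promised construction.

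For necessity, suppose $\cay(\zp,S)=C_p^{\Box n}$ admits a $(0,k)$-regular set $D$. Applying \eqref{le:le1.1} with $d=2n$ gives $(2n+k)|D|=kp^n$, so it suffices to show $1+\tfrac{2n}{k}=p^n/|D|$ is an integer, for then it divides $p^n$, equals some $p^t$, and yields $2n=(p^t-1)k$ (with $1\le t\le n-1$, since $t=n$ would force $|D|=1$, impossible once the graph is not complete). Equivalently I must prove $k\mid 2n$. I would record the defining property as the group-ring identity $(\widehat{S}+k)\,\widehat{D}=k\,\Sigma$ in $\ZZZ[\zp]$, where $\widehat{S}=\sum_{s\in S}s$, $\widehat{D}=\sum_{d\in D}d$ and $\Sigma=\sum_{g\in\zp}g$. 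Reducing modulo $p$ in the local ring $\FFF_p[\zp]$, whose augmentation ideal $I$ is nilpotent and whose socle is spanned by $\Sigma$, a short computation gives $\widehat{S}+k\equiv(2n+k)+\ell$ with $\ell\in I^{2}$. If $p\nmid(2n+k)$ this is a unit acting on $\Sigma$ as the scalar $(2n+k)^{-1}$ (because $I\cdot\Sigma=0$), forcing $\widehat{D}=k(2n+k)^{-1}\Sigma$ and hence $D\in\{\emptyset,\zp\}$, a contradiction; so at least $p\mid(2n+k)$.

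The main obstacle is upgrading $p\mid(2n+k)$ to the full divisibility $k\mid 2n$: the size equation by itself is not enough, as the triple $p=3,\ n=7,\ k=4$ shows, where $(2n+k)\mid kp^n$, $k\le 2n$, and $-k$ is even an eigenvalue of $C_p^{\Box n}$, yet no $(0,k)$-regular set exists. To close this I would exploit the same identity over the semisimple algebras $\FFF_q[\zp]$ for primes $q\neq p$, where the non-principal characters give $(\lambda_{\bv}+k)\,\widehat{D}(\bv)=0$ and so confine the Fourier support of $\mathbf 1_D$ (off the trivial character) to the $(-k)$-eigenspace; feeding in the explicit description of the characters affording eigenvalue $-k$ (those of balanced coordinate type) together with the $I$-adic filtration used above and the fact that $\mathbf 1_D$ is $\{0,1\}$-valued, I expect to force the prime-to-$p$ part of $k$ to divide $2n$, which with $(2n+k)\mid kp^n$ gives $k\mid 2n$. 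Making this valuation-and-character bookkeeping rigorous is the delicate point of the whole argument; the constructive direction, by contrast, is routine once the column-multiplicity condition $c_{\by}+c_{-\by}=k$ is in hand.
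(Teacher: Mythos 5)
Your sufficiency argument is complete and is essentially the paper's construction: the paper sets $M=PQ$ with $PQ=(N\;N\;\cdots\;N)$, where the columns of $N$ pick exactly one vector from each antipodal pair of $V(t,p)\setminus\{\mathbf{0}\}$, and takes $W=\ker M$ as in \eqref{eq:Wnp}; your multiplicity condition $c_{\by}+c_{-\by}=k$ subsumes this (the paper simply places all $k$ copies on one side of each pair), and your count of the neighbours of $\bv\notin W$ as $c_{M\bv}+c_{-M\bv}=k$ is a tidier version of the paper's Claims 2--4. So the ``if'' half, together with the promised linear construction, is fine and matches the paper.

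The genuine gap is in the ``only if'' half, and you flag it yourself: you rigorously obtain only $p\mid(2n+k)$ (your local-ring argument in $\FFF_p[\zp]$ is correct), while the step that would upgrade this to $k\mid 2n$, equivalently to $2n+k=p^tk$, is left as a programme (``I expect to force\dots'', ``making this\dots rigorous is the delicate point''). Since everything in the necessity direction beyond the counting identity $(2n+k)\,|D|=kp^n$ lives in exactly that step, the ``only if'' implication is not proved. It is worth recording that your diagnosis is sharper than the paper's own treatment: the paper disposes of necessity in one line, asserting that by \eqref{le:le1.1} the quantity $1+\frac{2n}{k}$ divides $p^n$, which already presupposes that $1+\frac{2n}{k}$ is an integer, i.e.\ $k\mid 2n$ --- precisely the point at issue. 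Your triple $p=3$, $n=7$, $k=4$, for which $(2n+k)\mid kp^n$ and $-k$ is an eigenvalue of $C_p^{\Box n}$ yet $2n\neq(p^t-1)k$ for any $t$, shows that neither the size equation nor the spectral condition can bridge this; some further input (the character/valuation argument you sketch, or the additional hypothesis $k\mid 2n$, or a restriction to subgroup regular sets where Lemma~\ref{pr:pr2.1} supplies $k\mid|S|=2n$) genuinely has to be provided, and neither your proposal nor the paper's one-line argument does so.
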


\begin{proof}
We identify $C_{p}^{\Box n}$ with $\cay(\zp,S)$, where $S=\{\be_i, -\be_i\ :\ 1\leq i\leq n\}$, with $\be_1, \be_2, \ldots, \be_n$ the standard basis of $\mathbb{Z}_p^n$. We also identify $\zp$ with the additive group of the $n$-dimensional linear space $V(n,p)$ over $\FFF_p$.

If $C_{p}^{\Box n}$ admits $(0,k)$-regular sets, then by (\ref{le:le1.1}), $1+\frac{2n}{k}$ divides $p^n$ and hence $C_{p}^{\Box n}$ has degree $2n = (p^t-1)k \le p^n - 1$ for some $1 \leq t \le n-1$, establishing the necessity.

Now we prove the sufficiency. Suppose that $2n = (p^t-1)k \le p^n - 1$ for some $1 \le t \le n-1$. Set $P = P_{t\times n}(p)= (\mathbf{I}\;\, \mathbf{0})$. Take $N$ to be a $t \times \frac{(p^t-1)}{2}$ matrix over $\FFF_p$ with columns nonzero vectors in $V(t, p)$ such that $\bu$ is a column of $N$ if and only if $-\bu$ is not a column of $N$. Since $1 \leq t \le n-1$, there exists an $n\times n$ matrix $Q$ over $\FFF_p$ with rank $n$ such that
\[
PQ=\left(\underbrace{
 \begin{array}{cccc}
   N & N & \cdots & N
   \end{array}}_{k \text{ times}}
   \right).
\]
Let $M = PQ$. Since $P$ is a $t \times n$ matrix with rank $t$ and $Q$ is an $n\times n$ matrix with rank $n$, $M$ is a $t \times n$ matrix over $\FFF_p$ with rank $t$. Hence the null space of $M$, namely
\begin{equation}
\label{eq:Wnp}
W = \{\bw\in V(n,p)\ :\ M\bw = \mathbf{0}_n\},
\end{equation}
is an $(n-t)$-dimensional subspace of $V(n,p)$. We now prove that $W$ is a $(0,k)$-regular set in $\Gamma$.

\smallskip
\textsf{Claim 1.} $W$ is an independent set of $C_{p}^{\Box n}$.
\smallskip

Suppose otherwise. Then there exist distinct $\bv, \bw \in W$ adjacent in $C_{p}^{\Box n}$. So $\bw = \bv \pm \be_i$ for some $\be_i \in S$. Thus,
$$
\mathbf{0}_n = M\bw = M(\bv \pm \be_i) = M\bv \pm M\be_i = (PQ)\be_i = (PQ)_i,
$$
which contradicts the fact that all column vectors of $PQ$ are nonzero. This proves Claim 1.

\smallskip
\textsf{Claim 2.} Every vertex $\bv \in V(n,p) \setminus W$ has at most $k$ neighbours in $W$.
\smallskip

Suppose to the contrary that some $\bv \in V(n,p) \setminus W$ has (at least) $k+1$ distinct neighbours $\bw_1, \bw_2, \ldots, \bw_k, \bw_{k+1}$ in $W$. Then there exist $k+1$ distinct vectors $\bu_1, \bu_2, \ldots, \bu_k, \bu_{k+1}$ in $S$ such that
\[
\bv = \bw_1+\bu_1 = \bw_2 + \bu_2 = \cdots = \bw_k + \bu_k = \bw_{k+1} + \bu_{k+1}.
\]
Hence
$$
\mathbf{0}_n = M(\bw_j - \bw_1) = M(\bu_{1} - \bu_{j}) = M\bu_{1} - M\bu_{j}
$$
for $2\leq j\leq k+1$. Since $\bu_{1} = \pm \be_{i}$ for some $i$, it follows that there exist $k$ different columns of $PQ$ which are equal to $\pm (PQ)_{i}$, which contradicts the fact that $PQ$ does not contain $k+1$ identical columns. This proves Claim 2.

\smallskip
\textsf{Claim 3.} For any $\bv \in V(n,p) \setminus W$, there exists an element $\be_i \in S$ such that $\bv \in W \pm \be_i$.
\smallskip

In fact, since $C_{p}^{\Box n}$ is connected, any $\bv \in V(n,p) \setminus W$ can be expressed as $\bv = \bu_{i_1}+\bu_{i_2}+\cdots+\bu_{i_r}$, where $\bu_{i_j} = \pm \be_{i_j} \in S$ for $1 \le j \le r$. (Note that $i_1, i_2, \ldots, i_r$ are not necessarily distinct.) Note that for any nonzero vector $\bu$ of $V(t,p)$, either $\bu$ or $-\bu$ is a column of $N$. Thus, if $\bu$ is a column of $N$, then $\bu$ is a column of $PQ$; otherwise, $-\bu$ is a column of $PQ$. Since $M\bv \ne \mathbf{0}_n$, it follows that $M\bv$ is a column of $PQ$ or $-PQ$, say, $M\bv=\pm (PQ)_i$ for some $i$. Since $\pm(PQ)_i = M\bu_i$ where $\bu_i=\pm \be_i$, it follows that $\bv - \bu_i \in W$. That is, $\bv \in W \pm \be_i$ as desired.

\smallskip
\textsf{Claim 4.} For any $\bv \in V(n,p) \setminus W$, there exist exactly $k$ distinct elements $\be_{i_1}, \be_{i_2}, \ldots, \be_{i_k}$ in $S$ such that $\bv \in W \pm \be_{i_j}$ for $1\leq j\leq k$.
\smallskip

In fact, by Claim 3, for any $\bv \in V(n,p) \setminus W$, there exists an element $\be_i \in S$ such that $\bv \in W \pm \be_i$. Since $(PQ)_h = (PQ)_l$ for any $1\leq h \leq n$ and $l \equiv h+\frac{(p^t-1)}{2}j \pmod{n}$ with $0\leq j\leq k-1$, we have $\bv \in W \pm \be_{l}$ for all $l \in \{1, 2, \ldots, n\}$ such that $l \equiv i + \frac{(p^t-1)}{2}j\; (\mathrm{mod}\; n)$ for some $0\leq j\leq k-1$. Therefore, there are at least $k$ distinct elements $\be_l$ of $S$ such that $\bv \in W \pm \be_{l}$. Combining this with Claim 2, we obtain Claim 4.

By Claims 1 and 4, $W$ is a $(0, k)$-regular set in $C_{p}^{\Box n}$. This completes the proof.
\end{proof}

The proof above gives, for any odd prime $p$ and positive integers $n, k$ satisfying $2n = (p^t-1)k \le p^n - 1$ for some $1 \le t \le n-1$, a construction of a $(0, k)$-regular set (as defined in \eqref{eq:Wnp}) in $C_{p}^{\Box n}$, and moreover this $(0, k)$-regular set is ``linear'' as it is a subspace of the vector space $V(n,p)$.

\section{Concluding remarks}

 Note that Theorems \ref{th:th5.3} and \ref{th:th5.4} show two necessary and sufficient conditions for a normal subgroup of a group to be a $(0,k)$-regular set of the group. As seen in the proof of  Theorems \ref{th:th5.3} and \ref{th:th5.4}, the normal subgroup of a group plays a key role in obtaining the Cayley graph of the group such that the normal subgroup is a $(0,k)$-regular set of the Cayley graph for some integer $1\leq k$. If we consider that the subgroup is not normal, what happens? We need to keep thinking about it. Thus it would be interesting to study whether Theorems \ref{th:th5.3} and \ref{th:th5.4}  are still true if the subgroup is not normal.

As seen in Section \ref{sec:eq-partn}, a Cayley graph $\Gamma=\cay(G, S)$ admits a subgroup $(0, k)$-regular set $H$, then $\Gamma$ admits a $(-k)$-equitable partition with exactly $|G:H|$ blocks and with $H$ as one of the blocks. In particular, $\pi=\{H, V(\Gamma)\setminus H\}$ is a $(-k)$-equitable partition of $\Gamma$. Such equitable partition of $\Gamma$  is called a {\em perfect $2$-coloring} of $\Gamma$ \cite{fon07}. Perfect $2$-colorings of a graph have been studied extensively over many years, such as, the existence of perfect $2$-colorings of Johnson graphs $J(v,3)$ \cite{gav13}, Hamming graph \cite{bes21} and generalized Petersen graphs \cite{mehdi16,mehdi22}. Similar to the study of perfect $2$-colorings of some graphs, it would be interesting to give all possible regular sets of some graphs.

In addition,  completely regular subsets of a graph  are closely related to the perfect codes of the graph. Given a  graph $\Gamma$, a subset $V$ of $V(\Gamma)$ is called a {\em completely regular set} \cite{god93} of $\Gamma$ if $\pi(V)=\{V_0, V_1, \ldots, V_d\}$ is an equitable partition of $\Gamma$, where $S_i$ is the set of all vertices at distance $i$ from $S$ for  any integer $0\le i\le d$, and $d$ is the covering radius of $S$. Some known results using completely regular sets and equitable partitions can help us to  classify certain perfect codes of some graphs. What happens to these notations when we study regular sets of groups? This will be considered in our future work.

In Section \ref{sec:seccube}, we gave a sufficient condition for cubelike graphs admitting $(0,k)$-regular sets. Using the result, we obtained the necessary and sufficient condition for the existence of $(0,k)$-regular sets of hypercubes. It is also interesting to study the existence of $(0,k)$-regular sets of cubelike graphs.

\subsubsection*{Acknowledgement}

The authors would like to thank the anonymous reviews for their valuable comments which improved the paper.
Xu was supported by the National Natural Science Foundation of China (Grant No.\@ 12071194, 11901263).

\subsubsection*{Data deposition Information}

No data were used support this study.

\subsubsection*{Conflict of Interest}
The authors declare that they have no conflicts of interest.

\end{document}